\documentclass[reqno,12pt,a4paper]{amsart}
\usepackage{amsmath,amssymb,amsthm}
\usepackage{bm,bbm,comment,mathtools}
\usepackage{graphicx}
\newlength{\margins}
\usepackage[top=\margins+10truemm,bottom=\margins,left=\margins,right=\margins]{geometry}
\usepackage{hyperref}

\allowdisplaybreaks
\makeatletter
\@namedef{subjclassname@2020}{%
\textup{2020} Mathematics Subject Classification}
\makeatother
\subjclass[2020]{11F33, 11F55, 11F30, 11F67}
\keywords{Hermitian modular form, Unitary groups, Congruences}
\thanks{This work was supported by the Japan Science and Technology Agency (JST) SPRING Program,
Grant Number JPMJSP2110, and by JSPS KAKENHI Grant Number JP26KJ1378.}
\author[N. Takeda]{Nobuki TAKEDA}

\address{Department of Mathematics, Graduate School of Science, Kyoto University, Kyoto 606-8502, Japan}
\email{takeda.nobuki.z04@kyoto-u.jp}

\theoremstyle{definition}
\newtheorem{definition}{Definition}[section]
\newtheorem{remark}[definition]{Remark}

\theoremstyle{plain}
\newtheorem{proposition}[definition]{Proposition}
\newtheorem{lemma}[definition]{Lemma}
\newtheorem{theorem}[definition]{Theorem}
\newtheorem{corollary}[definition]{Corollary}

\newtheorem*{cond}{Condition (A)}

\DeclareMathOperator{\tr}{Tr}

\DeclareMathOperator{\diag}{diag}
\DeclareMathOperator{\Aut}{Aut}
\DeclareMathOperator{\Her}{Her}
\DeclareMathOperator{\pr}{pr}
\DeclareMathOperator{\Sym}{Sym}
\DeclareMathOperator*{\bigboxtimes}{\boxtimes}

\newcommand{\bsk}{\boldsymbol{k}}
\newcommand{\bsl}{\boldsymbol{l}}

\ExplSyntaxOn
\clist_const:Nn \c_alphabet_clist
  {A,B,C,D,E,F,G,H,I,J,K,L,M,N,O,P,Q,R,S,T,U,V,W,X,Y,Z,
   a,b,c,d,e,f,g,h,i,j,k,l,m,n,o,p,q,r,s,t,u,v,w,x,y,z}
\cs_new:Npn \makealphabet #1 #2
  {\clist_map_inline:Nn \c_alphabet_clist
    {\exp_args:Nc \newcommand {#1##1}{#2{##1}}}}
\makealphabet{bb}{\mathbb}
\makealphabet{bf}{\mathbf}
\makealphabet{cal}{\mathcal}
\makealphabet{frak}{\mathfrak}
\ExplSyntaxOff

\newcommand{\gm}{\mathbb{G}_m}
\newcommand{\GL}{\mathrm{GL}}

\newcommand{\gu}{\mathrm{GU}}
\newcommand{\M}{\mathrm{M}}
\newcommand{\rmu}{\mathrm{U}}
\newcommand{\SL}{\mathrm{SL}}

\newcommand{\dkl}{\bbD_{\bsk,\bsl}}
\newcommand{\kv}{\bsk_v}
\newcommand{\lv}{\bsl_v}
\newcommand{\tilg}{\widetilde{G}}
\newcommand{\tilgam}{\widetilde{\Gamma}}
\newcommand{\tilk}{\widetilde{\frakK}}

\newcommand{\va}{v\in\bfa}

\providecommand{\abs}[1]{\left\lvert #1\right\rvert}
\providecommand{\norm}[1]{\left\lvert #1\right\rvert}
\providecommand{\adj}[1]{{#1^{*}}}

\providecommand{\hus}[1]{\mathfrak{H}_{#1}}

\renewcommand{\Im}{\mathrm{Im}}
\renewcommand{\Re}{\mathrm{Re}}

\numberwithin{equation}{section}
\begin{document}
\title[Congruences of Hermitian Klingen-Eisenstein series]{Congruences between Klingen-Eisenstein series and cusp forms on \(\rmu_{n,n}\)}
\date{}
\begin{abstract}
In this paper, we study congruences of Hecke eigenvalues between Hermitian Klingen--Eisenstein series and cusp forms on the unitary group \(\rmu_{n,n}\) defined over the rational number field \(\bbQ\).
We also prove the rationality of the space of Hermitian automorphic forms and the integrality of their Hecke eigenvalues.
\end{abstract}

\maketitle
\tableofcontents

\section{Introduction}

Studying congruences between Eisenstein series and cusp forms is fundamental to the arithmetic theory of automorphic forms.
These congruences are not merely arithmetic curiosities but are deeply intertwined with the special values of \(L\)-functions and have far-reaching applications in Iwasawa theory.
The existence of a congruence often implies that the \(p\)-adic properties of an \(L\)-value control the arithmetic of the associated Galois representations, a philosophy central to the proof of the Iwasawa Main Conjecture.

A foundational instance of this principle is the work of Skinner and Urban \cite{SkinnerUrban2014Iwasawa}, who established the Main Conjecture for \(\GL_2\).
Their strategy involved constructing congruences between Eisenstein series and cusp forms on the unitary similitude group \(\gu(2,2)\).

Analytic methods for constructing congruences using the pullback formula have been extensively developed, primarily in the context of symplectic groups \(\mathrm{Sp}(n)\).
Recently, Katsurada \cite{katsurada2008congruence} and Katsurada-Mizumoto \cite{Katsurada2012congruences} established a refined criterion for the existence of congruences between Klingen-Eisenstein series and cusp forms on \(\mathrm{Sp}(n)\).
Their method utilizes the pullback formula to explicitly relate the Petersson inner product of a cusp form and a restricted Eisenstein series to special values of standard \(L\)-functions.

In this paper, we extend these powerful techniques to the case of (vector-valued) Hermitian automorphic forms on the unitary group \(\rmu_{n,n}\) defined over the rational number field \(\bbQ\).
The primary objective of this work is to establish a precise criterion for the existence of congruences between Hermitian Klingen-Eisenstein series and Hecke cusp forms on \(\rmu_{n,n}\).

Specifically, let \(r < n\) be a positive integer.
We consider a Hecke cusp form \(f\) on the smaller unitary group \(\rmu_{r,r}\) and its associated Klingen-Eisenstein series \([f]_\mu^n\) (defined in Section~\ref{subsec:Eisenstein}) on \(\rmu_{n,n}\).
The main result of this paper (Theorem \ref{thm:main}) asserts that if the algebraic part of the special value of the standard \(L\)-function associated with \(f\), denoted by \(L(s, f, \mathrm{St})\), is divisible by a prime ideal \(\frakp\) (under appropriate normalization and conditions on the prime \(p\)), then there exists a Hecke cusp form \(F\) on \(\rmu_{n,n}\) such that
\[
  F \equiv_{ev} [f]_{\mu}^{n} \pmod{\frakP},
\]
where \(\frakP\) is a prime ideal of the compositum of the Hecke fields lying above \(\frakp\), and \(\equiv_{ev}\) denotes congruence of eigenvalues for all Hecke operators in the spherical Hecke algebra introduced in Section~\ref{sec:hecke-operators}.

To prove this theorem, we first utilize the vector-valued differential operators \(\dkl\) constructed in our previous work \cite{Takeda2025pullback}, which are designed to ensure that the pullback of an automorphic form remains automorphic on the subgroup.
By applying these operators to the Eisenstein series on \(\rmu_{n,n}\) and restricting it to \(\rmu_{n_1,n_1} \times \rmu_{n_2,n_2}\), we derive a pullback formula (Theorem \ref{thm:pullback}).
At the same time, we establish the necessary arithmetic framework by defining an integral structure on the space of automorphic forms and proving the rationality of the space of Hermitian automorphic forms (Theorem \ref{thm:integrality_automorphic}).

Finally, we apply the strategy of Katsurada-Mizumoto \cite{Katsurada2012congruences} to the Hermitian setting by analyzing the denominators of the Fourier coefficients of the Klingen-Eisenstein series \([f]_\mu^n\).

\medskip
We now describe our main result (Theorem~\ref{thm:main}) in more detail.

Let \(n\) be a positive integer and let \(\mu\) be a positive even integer with \(\mu > 2n\).
Let \((\bsk,\bsl)\) be a pair of dominant integral weights whose lengths satisfy \(\ell(\bsk), \ell(\bsl) \le n\).
Let \(f\) be a Hecke cusp form on the unitary group \(\rmu_{r,r}\) of signature \((r,r)\), and denote by \([f]_\mu^n\) the Klingen-Eisenstein series on \(\rmu_{n,n}\).
We write \(\bbQ(f)\) for the Hecke field generated by the eigenvalues of \(f\).

To formulate the congruence criterion, we introduce a constant
\[
  \calC_{n,\mu}(f)
  =
  \frac{\bbL_F(n,\mu)}{\bbL_F(2r,\mu)}\,
  \bbL\bigl((\mu+1)/2-r,f\bigr).
\]
(See Section \ref{sec:integrality} for the precise definitions and normalization.)

Let \(\frakp\) be a prime ideal of \(\bbQ(f)\).
Assume that \(\frakp\) satisfies
\[
  v_\frakp\!\left(
  \overline{\calC_{2n,\mu}(f)}\,
  A_{[f]_\mu^n}(\gamma_0,S_0)\,
  \overline{A_{[f]_\mu^n}(\gamma_0,S_0)}
  \right)
  < 0,
\]
where \(v_\frakp\) denotes the \(\frakp\)-adic valuation, and \(A_{[f]_\mu^n}(\ast,S_0)\) is the \(S_0\)-th Fourier coefficient of \([f]_\mu^n\) defined in \eqref{eq:fourier}.
We further impose several natural technical conditions on \(\frakp\) ensuring integrality.

Then there exists a Hecke cusp form \(F\) on \(\rmu_{n,n}\) such that
\[
  F \equiv_{ev} [f]_\mu^n \pmod{\frakP}
\]
for some prime ideal \(\frakP\) of \(\bbQ(f,F)\) lying above \(\frakp\).
Here the notation \(F \equiv_{ev} G \pmod{\frakP}\) means that the Hecke eigenvalues of \(F\) and \(G\) are congruent modulo \(\frakP\) for all Hecke operators in the spherical Hecke algebra.

Moreover, under a stronger condition on the \(\frakp\)-adic valuation of the above algebraic quantity, we obtain a refined congruence modulo a higher power of \(\frakP\).

This paper is organized as follows:
Section 2 provides the preliminary definitions of Hermitian automorphic forms on \(\rmu_{n,n}\) and their Fourier expansions.
Section 3 is devoted to the local theory of Hecke algebras, describing the structure at inert, ramified, and split places.
Section 4 introduces the differential operators and states the pullback formula (Theorem \ref{thm:pullback}).
Section 5 discusses the arithmetic properties and rationality of the space of automorphic forms (Theorem \ref{thm:integrality_automorphic}).
Section 6 contains the proof of the main result.
Section 7 provides explicit numerical examples for \(n=2\) and \(n=3\).
\subsection*{Acknowledgements}
The author is grateful to T. Ikeda for his guidance and support as the author's doctoral advisor, and to H. Katsurada for valuable comments.
This work was supported by the Japan Science and Technology Agency (JST) SPRING Program,
Grant Number JPMJSP2110,
and by JSPS KAKENHI Grant Number JP26KJ1378.
\subsection*{Notation}
We denote by \(\M_{m,n}(R)\) the set of \(m\times n\) matrices with entries in \(R\).
In particular, we put \(\M_n(R) := \M_{n,n}(R)\).
Let \(I_n\) be the identity element of \(\M_n(R)\) and \(e_{ij}\)
the matrix with \(1\) at the \((i,j)\)-th entry and \(0\) elsewhere.
Let \(\det(X)\) denote the determinant of \(X\) and \(\tr(X)\) the trace of \(X\).
For a square matrix \(X \in \M_n(R)\), let \({}^tX\) denote the transpose of \(X\).
Let \(\GL_n(R) \subset \M_n(R)\) be a general linear group of degree \(n\).

Let \(K\) be a quadratic extension field of \(K_0\), and let \(\rho\) be the non-trivial automorphism of \(K\) over \(K_0\).
We often put \(\overline{x}=\rho(x)\) for \(x\in K\).
We put \(\overline{X}=(\overline{x_{ij}})\) and \(\adj{X}= {}^t\overline{X}\) for \(X=(x_{ij})\in \M_{m,n}(K)\).
Let \(L\) be an algebraic number field, and let \(\frakp\) be a prime ideal of \(L\).
We denote by \(L_\frakp\) the \(\frakp\)-adic completion of \(L\)
and by \(\calO_L\) (resp. \(\calO_\frakp\)) the ring of integers of \(L\) (resp. of \(L_\frakp\)).
Let \(v_\frakp\) be the additive valuation of \(L_\frakp\), normalized by \(v_\frakp(\varpi_\frakp)=1\), where \(\varpi_\frakp\) is a uniformizer of \(L_\frakp\).
We put \(\calO_{L,\frakp}=L\cap\calO_\frakp\), the localization of \(\calO_L\) at \(\frakp\).
Let \(\Her_n(\bbC)\subset \M_n(\bbC)\) be the set of Hermitian matrices.
For an element \(X \in \Her_n(\bbC)\),
we denote by \(X>0\) (resp. \(X \geq 0\)) that \(X\) is a positive definite matrix (resp. a non-negative definite matrix).
For a subset \(S \subset \Her_n(\bbC)\), we denote by \(S_{>0}\) (resp. \(S_{\geq 0}\))
the subset of positive definite (resp. non-negative definite) matrices in \(S\).
If a group \(G\) acts on a set \(V\), then we denote by \(V^G\) the \(G\)-invariant
subspace of \(V\).

Let \({\det}^k\) be the one-dimensional representation of \(\GL_n(\bbC)\) given by the \(k\)-th power of the determinant,
and let \(\Sym^l\) be the \(l\)-th symmetric power representation of \(\GL_n(\bbC)\).
For a representation \((\rho,V)\), we denote by \((\rho^*, V^*)\)
the contragredient representation of \((\rho, V)\).

We denote by
\((x)^{(r)} = x(x+1)\cdots(x+r-1)\) (resp. \((x)_{(r)} = x(x-1)\cdots(x-r+1)\))
the ascending (resp. descending) Pochhammer symbol.

\section{Hermitian automorphic forms}\label{sec:hermitian}
Let \(K\) be a quadratic imaginary extension of a totally real field \(F\), and let \(\bfa\) (resp. \(\bfh\)) denote the set of infinite (resp. finite) places of \(F\).
We fix a CM type \(\Sigma_K\) for \(K\) (i.e.
\(\Sigma_K\) contains a choice of exactly one representative from each pair of complex conjugate embeddings of \(K\)).
We often canonically identify the CM type \(\Sigma_K\) with \(\bfa\), and we denote by \(\sigma_v\) the embedding \(K \hookrightarrow \bbC\) corresponding to \(v \in \bfa\).

We put \(m=m_{F}:=\#\bfa=[F:\bbQ]\).
We put \(K_v=\prod_{w|v} K_w\) and \(\calO_{K_v}=\prod_{w|v}\calO_{K_w}\) for a place \(v\) of \(F\).
Let \(\bbA=\bbA_F\) be the adele ring of \(F\), and \(\bbA_{0}\),
\(\bbA_{\infty}\) the finite and infinite parts of \(\bbA\), respectively.

We put \(J_n=
\begin{pmatrix}O_n & I_n \\ -I_n & O_n \\
\end{pmatrix}\).
The unitary group \(\rmu_{n,n}\) and the unitary similitude group \(\gu_{n,n}\) are algebraic groups defined over the totally real field \(F\).
For any \(F\)-algebra \(R\), their \(R\)-points are given by
\begin{align*}
  \rmu_{n,n}(R) & =\{g\in \GL_{2n}(K\otimes_{F}R) \mid \adj{g}J_ng=J_n\},                              \\
  \gu_{n,n}(R)  & =\{g\in \GL_{2n}(K\otimes_{F}R) \mid \adj{g}J_ng=\nu(g)J_n, \ \nu(g)\in  R^\times\}.
\end{align*}
We call \(\nu(g)\) the similitude factor of \(g\).
If we put
\[\pi(g)=
  \begin{pmatrix}\nu(g)I_n & 0_n \\0_n&I_n
  \end{pmatrix}^{\!-1}\hspace{-5pt}g\in \rmu_{n,n}\]
for \(g\in \gu_{n,n}\),
we have the isomorphism
\begin{eqnarray}
  \gu_{n,n}&\cong&\rmu_{n,n}\rtimes\gm,\nonumber\\
  g&\mapsto&\left(\pi(g),\ \nu(g)\right),\label{eq:gu_isom}.
\end{eqnarray}

We also define other unitary groups \(\rmu(n,n)\), \(\gu(n,n)\), \(\gu^+(n,n)\),
and \(\rmu(n)\) by
\begin{align*}
  \rmu(n)    & = \{ g \in \GL_n(\bbC) \mid \adj{g} g = I_n \},                                              \\
  \rmu(n,n)  & = \{ g \in \GL_{2n}(\bbC) \mid \adj{g} J_n g = J_n \},                                       \\
  \gu(n,n)   & = \{ g \in \GL_{2n}(\bbC) \mid \adj{g} J_n g = \nu(g) J_n, \ \nu(g) \in \bbR^\times \},      \\
  \gu^+(n,n) & = \{ g \in \GL_{2n}(\bbC) \mid \adj{g} J_n g = \nu(g) J_n, \ \nu(g) \in \bbR^\times_{>0} \}.
\end{align*}

We put \(\tilg_n = \gu_{n,n}(F)\) and
\[
  \tilg^+_n = \{ g \in \gu_{n,n}(F) \mid \nu(g) \in F_{>0} \},
\]
where
\[
  F_{>0} = \{ x \in F \mid \sigma(x) > 0 \text{ for all embeddings } \sigma : F \hookrightarrow \bbR \}.
\]

Let \(\tilg_{n,v} = \gu_{n,n}(F_v)\) for a place \(v\) of \(F\),
\[
  \tilg_n(\bbA) = \gu_{n,n}(\bbA), \quad
  \tilg_{n,0} = \gu_{n,n}(\bbA_0),
\]
\[
  \tilg_{n,\infty} = \gu_{n,n}(F \otimes_\bbQ \bbR) = \prod_{v \in \mathbf{a}} \tilg_{n,v} = \prod_{v \in \mathbf{a}} \gu(n,n).
\]
We also set
\[
  \tilg^+_{n,\infty} = \prod_{v \in \mathbf{a}} \gu^+(n,n), \qquad
  \tilg^+_n(\bbA) = \prod_{v \in \mathbf{h}} \gu_{n,n}(F_v) \times \tilg^+_{n,\infty}.
\]

Similarly, for the unitary group \(\rmu_{n,n}\), we define
\[
  G_n = \rmu_{n,n}(F), \quad G_{n,v} = \rmu_{n,n}(F_v), \quad \text{etc.}
\]

We define \(\frakK_{n,v}\) and \(\tilk_{n,v}\) by
\begin{equation*}
  \frakK_{n,v}=\left\{
  \begin{array}{ll}
    \rmu_{n,n}(\calO_{F_v}) & (v\in\bfh), \\
    \rmu(n)\times \rmu(n)   & (\va),
  \end{array}
  \right.\quad \text{and}\quad
  \tilk_{n,v}=\left\{
  \begin{array}{ll}
    \gu_{n,n}(\calO_{F_v}) & (v\in\bfh), \\
    \rmu(n)\times \rmu(n)  & (\va).
  \end{array}
  \right.
\end{equation*}
Then \(\frakK_{n,v}\) (resp. \(\tilk_{n,v}\)) is isomorphic to a maximal compact subgroup of \(G_{n,v}\) (resp. \(\tilg^+_{n,v}\)) for each place \(\va\).
We fix a maximal compact subgroup of \(G_{n,v}\) (resp. \(\tilg_{n,v}\)),
which is also denoted by \(\frakK_{n,v}\) (resp. \(\tilk_{n,v}\)) by abuse of notation.
We put \(\frakK_{n,0}=\prod_{v\in\bfh}\frakK_{n,v}\), \(\frakK_{n,\infty}=\prod_{\va }\frakK_{n,v}\),
\(\tilk_{n,0}=\prod_{v\in\bfh}\tilk_{n,v}\) and \(\tilk_{n,\infty}=\prod_{\va }\tilk_{n,v}\).

\subsection{As analytic functions on Hermitian symmetric spaces}\label{subsec:analytic}
We have the identification
\begin{align*}
  \M_n(\bbC) & \cong \Her_n(\bbC)\otimes_\bbR \bbC, \\
  Z          & \mapsto \Re(Z) + \sqrt{-1}\,\Im(Z),
\end{align*}
where the Hermitian real and imaginary parts are given by
\begin{align*}
  \Re(Z) & = \tfrac{1}{2}(Z+\adj{Z}),          \\
  \Im(Z) & = \tfrac{1}{2\sqrt{-1}}(Z-\adj{Z}).
\end{align*}

Let \(\hus{n}\) denote the Hermitian upper half space of degree \(n\):
\[
  \hus{n} = \{ Z \in \M_n(\bbC) \mid \Im(Z) > 0 \}.
\]

The group \(\tilg^+_{n,\infty}=\prod_{\va}\gu^+(n,n)\) acts on \(\hus{n}^\bfa\) by
\[
  gZ = \bigl((A_v Z_v + B_v)(C_v Z_v + D_v)^{-1}\bigr)_{\va},
\]
for \(g=
\begin{pmatrix}A_v & B_v \\ C_v & D_v
\end{pmatrix}_{\va}\in \tilg^+_{n,\infty}\) and \(Z=(Z_v)_{\va}\in\hus{n}^\bfa\).
Set \(\bfi_n := (\sqrt{-1}I_n)_{\va}\in \hus{n}^\bfa\).

For \(g=(g_v)_{\va}=
\begin{pmatrix}A_v & B_v \\ C_v & D_v
\end{pmatrix}_{\va}\in \tilg^+_{n,\infty}\) and \(Z=(Z_v)_{\va}\in\hus{n}^\bfa\), define
\[
  \lambda(g,Z) = \bigl(\nu(g_v)^{-1/2}(C_v Z_v + D_v)\bigr)_{\va}, \quad
  \mu(g,Z) = \bigl(\nu(g_v)^{-1/2}(\overline{C_v}{}^t Z_v + \overline{D_v})\bigr)_{\va},
\]
and
\[
  M(g,Z) = (\lambda(g,Z),\, \mu(g,Z)).
\]
For brevity, we write
\[
  \lambda(g) = \lambda(g,\bfi_n), \quad
  \mu(g) = \mu(g,\bfi_n), \quad
  M(g) = M(g,\bfi_n).
\]

Let \((\rho, V )\) be an algebraic representation of
\(\tilk_{n,\infty}^\bbC:=\prod_{\va}(\GL_n(\bbC)\times\GL_n(\bbC))\) on a finite-dimensional complex vector space \(V\),
and take a Hermitian inner product on \(V\) such that
\[\left<\rho(g)v,w\right>=\left<v,\rho(\adj{g})w\right>\]
for any \(g \in \tilk_{n,\infty}^\bbC\).
Let \(\omega: (\bbR_{>0})^\bfa\rightarrow \bbC^\times\) be a continuous character.
For a \(V\)-valued function \(F\) on \(\hus{n}^\bfa\), we put
\[ F|_{(\rho,\omega)}g(Z)=\omega(\nu(g))^{-1}\rho(M(g,Z))^{-1}F(g\langle Z\rangle) \quad (g\in \tilg^+_{n,\infty},\ Z \in \hus{n}^\bfa).\]
We set

\[\tilgam_n=\tilg^+_{n,\infty}\cap \GL_{2n}(\calO_{K}).\]
\begin{definition}
  We say that \(F\) is a (holomorphic, level~1) \emph{Hermitian modular form} of weight \((\rho,V)\) and character \(\omega\)
  if \(F\) is a holomorphic \(V\)-valued function on \(\hus{n}^\bfa\) and \(F|_{(\rho,\omega)}g=F\) for all \(g \in \tilgam_n\).
  (If \(n=1\) and \(F=\bbQ\), another holomorphy condition at the cusps is also needed.)

  We denote by \(M_\rho(\tilgam_n,\omega)\) a complex vector space of all Hermitian modular forms
  of weight \((\rho,V)\), and character \(\omega\).
\end{definition}

We set
\[
  \Lambda_n := \Her_n(K)\cap M_n(\mathfrak d_K^{-1}),
\]
where \(\mathfrak d_K\) denotes the different ideal of \(K\).
We further denote by \({\Lambda_n}_{\ge 0}\) the subset of \(\Lambda_n\) consisting of matrices that are non-negative definite for every embedding \(\sigma\in\Sigma_K\).

Then, a modular form \(F \in M_\rho(\tilgam_n,\omega)\) has the Fourier expansion
\[ F(Z)=\sum_{T\in {\Lambda_n}_{\geq0}}a(F,T)\bfe\left(\sum_{\va}(\tr(T_{\sigma_v} Z_v))\right),\]
where \(a(F,T)\in V\) and \(\bfe(z)=\exp(2\pi\sqrt{-1}z)\).
Here, \(T_{\sigma_v}\) is the image of \(T\in \Her_n(K)\) under the embedding corresponding to \(\sigma_v\in\Sigma_K\).
If \(a(F, T)=0\) unless \(T\) is positive definite,

we say that \(F\) is a (holomorphic) Hermitian cusp form of weight \((\rho,V)\) and character \(\omega\).
We also denote by \(S_\rho(\tilgam_n,\omega)\) a complex vector space of all cusp forms of weight \((\rho,V)\), and character \(\omega\).

Write the variable \(Z=(X_v+\sqrt{-1}Y_v)_{\va}\) on \(\hus{n}^\bfa\) with \(X_v,Y_v \in \Her_n(\bbC)\) for each \(\va\).
We identify \(\Her_n(\bbC)\) with \(\bbR^{n^2}\) and define measures \(dX_v, dY_v\) as the
standard measures on \(\bbR^{n^2}\).
We define a measure \(dZ\) on \(\hus{n}^\bfa\) by
\[dZ= \prod_{\va}dX_v dY_v.\]
For \(F,G \in M_\rho(\tilgam_n,\omega)\), we define the Petersson inner product
by
\[ (F,G)=\int_{D } \left\langle\rho(Y^{1/2},{}^tY^{1/2})F(Z),\rho(Y^{1/2},{}^tY^{1/2})G(Z)\right\rangle(\prod_{\va}\det(Y_v)^{-2n})dZ,\]
where \(Y=(Y_v)_{\va}=\Im(Z)\), \(Y^{1/2}=(Y^{1/2}_v)_{\va}\) is a family of positive definite Hermitian matrices such that
\((Y_v^{1/2})^2=Y_v\), and \(D\) is a Siegel domain on \(\hus{n}^\bfa\)
for \(\tilgam_n\).
This integral converges if either \(F\) or \(G\) is a cusp form.
We call a sequence of non-negative integers \(\bsk = (k_1, k_2, \dots)\) a \emph{dominant integral weight}
if \(k_i \geq k_{i+1}\) for all \(i\) and \(k_i = 0\) for almost all \(i\).
The largest integer \(m\) such that \(k_m \neq 0\) is called the \emph{length} of \(\bsk\) and denoted by \(\ell(\bsk)\).
Dominant integral weights of length at most \(n\) correspond bijectively to the set of irreducible algebraic
representations of \(\GL_n(\bbC)\).
We denote by \((\rho_{n,\bsk}, V_{n,\bsk})\) the irreducible representation of \(\GL_n(\bbC)\)
corresponding to a dominant integral weight \(\bsk\) with \(\ell(\bsk) \leq n\).

In what follows, we will frequently realize representation spaces in terms of bideterminants,
so we briefly recall the relevant construction.

Let \(\bsk = (k_1, \dots, k_n)\) be a dominant integral weight with \(\ell(\bsk) \leq n\).
We realize the corresponding representation space \(V_{n,\bsk}\) of \(\GL_n(\bbC)\)
as a space of bideterminants (cf.~Ibukiyama \cite{ibukiyama2019construction}).

Let \(U = (u_{ij})\) be an \(\ell(\bsk) \times n\) matrix of variables.
For a positive integer \(a \leq \ell(\bsk)\), let \(SI_{n,a}\) denote the set of strictly increasing sequences
\(J = (j_1, \dots, j_a)\) of positive integers not exceeding \(n\).
For each \(J \in SI_{n,a}\), define the submatrix \(U_J\) of \(U\) by
\[
  U_J =
  \begin{pmatrix}
    u_{1,j_1} & \cdots & u_{1,j_a} \\
    \vdots    & \ddots & \vdots    \\
    u_{a,j_1} & \cdots & u_{a,j_a}
  \end{pmatrix}.
\]

A polynomial \(P(U)\) in the entries of \(U\) is called a \emph{bideterminant of weight \(\bsk\)}
if it has the form
\[
  P(U) = \prod_{i=1}^m \prod_{j=1}^{k_i - k_{i+1}} \det U_{J_{ij}},
\]
where \(J_{ij} \in SI_{n,i}\) for each \(j=1,\ldots, k_i - k_{i+1}\).
Here we adopt the convention that the inner product is \(1\) if \(k_i = k_{i+1}\).

Let \(BD_{\bsk}\) denote the set of all bideterminants of weight \(\bsk\).
For a commutative ring \(R\) and an \(R\)-algebra \(S\), let \(S[U]_{\bsk}\) denote the \(R\)-module of all
\(S\)-linear combinations of elements in \(BD_{\bsk}\).

The group \(\GL_n(\bbC)\) acts on \(\bbC[U]_{\bsk}\) via
\[
  (g, P(U)) \mapsto P(U g), \quad g \in \GL_n(\bbC), \, P(U) \in \bbC[U]_{\bsk},
\]
and under this action, \(\bbC[U]_{\bsk}\) provides a concrete realization of the
representation space \(V_{n,\bsk}\).

We define an inner product on \(\bbC[U]_{\bsk}\) by
\[
  \langle P(U), Q(U) \rangle
  = P(\partial_U)\,\overline{Q}(U)\big|_{U=0},
\]
where \(P(\partial_U)\) denotes the differential operator obtained by replacing
each variable \(u_{ij}\) in \(P(U)\) with \(\frac{\partial}{\partial u_{ij}}\), and
\(\overline{Q}(U)\) is the polynomial obtained by taking the complex conjugates of
the coefficients of \(Q(U)\).
Then we have
\[
  \langle \rho_{n,\bsk}(g) P(U), Q(U) \rangle
  = \langle P(U), \rho_{n,\bsk}(g^*) Q(U) \rangle
\]
for any \(g \in \GL_n(\bbC)\) and any \(P(U), Q(U) \in \bbC[U]_{\bsk}\)
(cf. \cite{atobe2023harder}).

We fix a basis \(\{v_1,\ldots, v_t\}\) of the free module \(\bbZ[U]_{\bsk}\);
for example, one may take the basis associated with semi-standard Young tableaux
(cf. \cite[\S 4.5]{green2007polynomial}).
Let \(K\) be a number field and let \(\mathcal{O}\) denote the ring of integers of \(K\).
For a prime ideal \(\frakp\) of \(\mathcal{O}\) and an element
\(a = \sum_{i=1}^t a_i v_i \in K[U]_{\bsk}\),
we define the valuation of \(a\) with respect to \(\frakp\) by
\[
  v_{\frakp}(a) = \min_{1 \le i \le t} v_{\frakp}(a_i).
\]
We say that \(\frakp\) divides \(a\) if \(v_{\frakp}(a) > 0\), and write
\(\frakp | a\).
This valuation is independent of the choice of the basis of \(\bbZ[U]_{\bsk}\).

For a family \((\bsk,\bsl)=(\kv,\lv)_{\va}\) of pairs of dominant integral weights
such that \(\ell(\kv)\leq n\) and \(\ell(\lv)\leq n\) for any \(v\in \bfa\),
we define the representation
\(\rho_{n,(\bsk,\bsl)}=\bigboxtimes_{\va}(\rho_{n, \kv}\boxtimes\rho_{n, \lv})\) of \(\tilk_{n,\infty}^\bbC\).
We put
\[M_{(\bsk,\bsl)}(\tilgam_n,\omega)=M_{\rho_{n,(\bsk,\bsl)}}(\tilgam_n,\omega)
  \quad\text{and}\quad
  S_{(\bsk,\bsl)}(\tilgam_n,\omega)=S_{\rho_{n,(\bsk,\bsl)}}(\tilgam_n,\omega).\]
When \(\bsk=(\kappa_v,\ldots,\kappa_v)_{\va}\) and \(\bsl=(0,\ldots,0)_{\va}\)
for a family \(\kappa=(\kappa_v)_{\va}\) of non-negative integers,
we also put
\[ {\det}^\kappa=\rho_{n,(\bsk,\bsl)},
  \quad M_\kappa(\tilgam_n,\omega)=M_{(\bsk,\bsl)}(\tilgam_n,\omega)
  \quad\text{and}\quad
  S_\kappa(\tilgam_n,\omega)=S_{(\bsk,\bsl)}(\tilgam_n,\omega).\]

\subsection{As functions on unitary groups over the adeles}\label{subsec:adeles}
Let \(\frakK_{n,\infty}\) be the stabilizer of \(\bfi_n \in \hus{n}^\bfa\) in \(G_{n,\infty}\).
Then \(\frakK_{n,\infty}\) is a maximal compact subgroup of \(G_{n,\infty}\) (it is also a maximal compact subgroup of \(\tilg^+_{n,\infty}\))
and is isomorphic to \(\prod_{\va}\rmu(n)\times\rmu(n)\), which is given by
\[
  \begin{array}{rccc}
     & \prod_{\va}\rmu(n)\times\rmu(n) & \rightarrow & \frakK_{n,\infty} \\
     & (k_{1,v},k_{2,v})_{\va}         & \mapsto     & \left(\frakc
    \begin{pmatrix}k_{2,v} & 0 \\0&{}^tk_{1,v}^{-1}
    \end{pmatrix}\frakc^{-1}\right)_{\va},
  \end{array}\]
where \(\frakc=\dfrac{1}{\sqrt{2}}
\begin{pmatrix}1 & \sqrt{-1} \\ \sqrt{-1} & 1\\
\end{pmatrix}\in \M_{2n}(\bbC)\).

The stabilizer of \(\bfi_n\) in \(\tilg^+_{n,\infty}\) is \(\tilk^+_{n,\infty}=\prod_{\va}(\bbR_{>0}\cdot \tilk_{n,\infty})\).
Note that \(\tilk^+_{n,\infty}\) is clearly not a maximal compact subgroup of \(\tilg^+_{n,\infty}\).

For an infinite place \(\va\), we put \(\frakg_{n,v}=\mathrm{Lie}(\tilg_{n,v})\),
\(\frakk_{n,v}=\mathrm{Lie}(\tilk_{n,v})\) and let \(\frakg^\bbC_{n,v}\) and
\(\frakk^\bbC_{n,v}\) be the complexification of \(\frakg_{n,v}\) and    \(\frakk_{n,v}\), respectively.

We have the Cartan decomposition \(\frakg_{n,v}=\frakk_{n,v}\oplus\frakp_{n,v}\).

We put
\begin{align*}
  \kappa_{v,ij}  & =\frakc
  \begin{pmatrix}0 & 0 \\  0& -e_{ji}\\
  \end{pmatrix}\frakc^{-1},\quad
  \kappa'_{v,ij}=\frakc
  \begin{pmatrix}e_{ij} & 0 \\ 0 &0 \\
  \end{pmatrix}\frakc^{-1}, \\
  \pi^{+}_{v,ij} & =\frakc
  \begin{pmatrix}0 & e_{ij} \\ 0 & 0\\
  \end{pmatrix}\frakc^{-1},\quad \mathrm{and} \quad
  \pi^{-}_{v,ij}=\frakc
  \begin{pmatrix}0 & 0 \\ e_{ij} & 0\\
  \end{pmatrix}\frakc^{-1},
\end{align*}
where
\(e_{ij} \in \M_{n,n}(\bbC)\) is the matrix whose only non-zero entry is \(1\) in the \((i,j)\)-entry.
\(\{\kappa_{v,ij}\}\) is a basis of \(\frakk^\bbC_{n,v}\).
Let \(\frakp^+_{n,v}\) (resp. \(\frakp^-_{n,v}\)) be the \(\bbC\)-span of
\(\{\pi^{+}_{v,ij}\}\) (resp. \(\{\pi^{-}_{v,ij}\}\)) in \(\frakg^\bbC_{n,v}\).
Then, we have \(\frakp^\bbC_{n,v}=\frakp^+_{n,v}\oplus\frakp^-_{n,v}\oplus\bbC\cdot I_{2n}\).
We set \(\frakg_{n}=\prod_{\va}\frakg_{n,v}\), \(\frakk^\bbC_{n}=\prod_{\va}\frakk^\bbC_{n,v}\), etc.
\begin{definition}
  Let \(\rho\) be a representation of \(\tilk_{n,\infty}\) on a finite-dimensional complex vector space \(V\),
  and let \(\omega\) be a unitary character of \(\bbR^\bfa\).

  A \emph{Hermitian automorphic form} on \(\tilg^+_{n}(\bbA)\) of weight \((\rho,V)\) and character \(\omega\)
  is a \(V\)-valued smooth function \(f : \tilg^+_{n}(\bbA) \;\longrightarrow\; V\)
  satisfying the following conditions:
  \begin{itemize}
    \item  \(f(u gk) = \omega(\nu(k_\infty))^{-1}\rho(\nu(k_\infty)^{-1/2}k_\infty)^{-1}f(g)\) for all \(u \in \tilg^+_{n}\), \(g \in \tilg^+_{n}(\bbA)\), \(k \in \tilk^+_{n,\bbA}\), where \(k_\infty\) is the infinite part of \(k\).
    \item  \(f\) is of moderate growth.
    \item  \(f\) is \(Z(\frakg)\)-finite,
          where \(Z(\frakg_n)\) denotes the center of the universal enveloping algebra of
          \(\frakg_n\)
  \end{itemize}
  We denote by \(\widetilde{\calA}_n(\rho,\omega)\) the complex vector space of Hermitian automorphic forms on \(\tilg^+_{n}(\bbA)\) of weight \(\rho\) and character \(\omega\).
\end{definition}

\begin{definition}
  A Hermitian automorphic form \(f\in\widetilde{\calA}_n(\rho,\omega)\) is called a cusp form if
  \[\int_{N(F)\backslash N(\bbA)}f(ng)dn=0\]
  for any \(g \in \tilg_{n}(\bbA)\) and the unipotent radical \(N\) of any proper parabolic subgroup of \(\tilg_n\).
  We denote by \(\widetilde{\calS}_{n}(\rho,\omega)\) the complex vector space of cusp
  forms on \(\tilg^+_{n}(\bbA)\) of weight \((\rho,V)\) and character \(\omega\).
\end{definition}
\begin{remark}
  When no character is specified, we simply write
  \[
    \widetilde{\calA}_n(\rho) = \bigoplus_{\omega}\, \widetilde{\calA}_n(\rho,\omega),\quad \text{and}\quad
    \widetilde{\calS}_{n}(\rho) = \bigoplus_{\omega}\, \widetilde{\calS}_{n}(\rho,\omega).
  \]
\end{remark}

We also define Hermitian automorphic forms for the unitary group \(G_n\) in a similar way.
Let \(\calA_n(\rho)\) (resp. \(\calS_{n}(\rho)\)) denote the space of Hermitian automorphic forms (resp.~cusp forms) on \(G_{n}(\bbA)\) of weight \((\rho, V)\).

For each finite place \(v\in \bfh\), we take the Haar measure \(dg_v\) on \(G_{n,v}\),
normalized so that the maximal compact subgroup \(\frakK_{n,v}\) has volume \(1\).

For each archimedean place \(v\in \bfa\), we choose the Haar measure \(dg_v\) on
\(G_{n,v}\) so that the volume of \(\frakK_{n,v}\) is \(1\), and the
induced measure on the symmetric space
\(\hus{n}\cong
G_{n,v}/\frakK_{n,v}\)
is given by
\((\det Y_v)^{-2n}\, dZ_v\) on \(\hus{n}\).

By taking the restricted product, we obtain a Haar measure
\(dg = \prod_v dg_v\)
on \(G_{n}(\bbA)\).

Using this choice, we define the \emph{Petersson inner product} on the space of
automorphic forms \(\calA_n(\rho)\) by
\[
  (f,h)
  \;=\;
  \int_{G_{n}\backslash G_{n}(\bbA)}
  \langle f(g),h(g)\rangle \, dg,
\]
for \(f,h \in \calA_n(\rho)\), where \(dg\) denotes the Haar measure on
\(G_{n}\backslash G_{n}(\bbA)\) induced from \(dg\) on \(G_{n}(\bbA)\).
If either \(f\) or \(h\) belongs to \(\calS_{n}(\rho)\), then the integral converges absolutely.

\vskip.5\baselineskip

Let \(U_1\) be the algebraic group defined by
\[
  U_1(R) = \{\, x \in (K\otimes_F R)^\times \mid x\overline{x}=1 \,\}
\]
for any \(F\)-algebra \(R\), and let \(\mathbb{G}_m\) denote the multiplicative group.
By the strong approximation theorem for the special unitary group \(\mathrm{SU}(n,n)\) and the short exact sequence
\[
  1 \;\longrightarrow\; \mathbb{G}_m \;\longrightarrow\; \mathrm{Res}_{K/F}(\mathbb{G}_m) \;\longrightarrow\; U_1 \;\longrightarrow\; 1,
\]
we obtain an isomorphism
\begin{align}\label{eq:representative}
        & \tilg^+_{n}(F)\backslash \tilg^+_{n}(\bbA) /
  \tilk_{n,\bbA}\tilg^+_{n,\infty}                                                                                          \\
  \cong & \;
  \bigl(F^\times\backslash\bbA^\times / \prod_{v\in\bfh}\calO_v^\times\cdot\bbA^\times_\infty\bigr)
  \ltimes  \bigl(U_1(F)\backslash U_1(\bbA) / \prod_{v\in \bfh}U_1(\calO_{v}^\times)\cdot U_1(\bbA_\infty) \bigr) \nonumber \\
  \cong & \;
  \bigl(F^\times\backslash\bbA^\times / \prod_{v\in\bfh}\calO_v^\times\cdot\bbA^\times_\infty\bigr)
  \ltimes \bigl(K^\times\backslash \bbA_{K}^\times / \bbA^\times \prod_{v\in\bfh}\calO_{K_v}^\times\bbA^\times_{K,\infty}\bigr),
\end{align}

We now describe an explicit set of representatives for the relevant double coset space.
Fix complete sets of representatives
\[
  N_0:=\{\nu_1=1, \ldots, \nu_{h_1}\} \subset \bbA_0^\times
  \quad\text{for } F^\times\backslash\bbA^\times / \prod_{v\in\bfh}\calO_v^\times\cdot\bbA^\times_\infty,
\]
and
\[
  Z_0:=\{z_1=1, \ldots, z_{h_2}\} \subset \bbA_{K,0}^\times
  \quad\text{for } K^\times\backslash \bbA_{K}^\times / \bbA^\times \prod_{v\in\bfh}\calO_{K_v}^\times\bbA^\times_{K,\infty}.
\]

By the decomposition obtained above, each double coset in
\begin{equation}\label{eq:doublecoset}
  \tilg^+_{n}(F)\backslash \tilg^+_{n}(\bbA) /
  \tilk_{n,\bbA}\tilg^+_{n,\infty}
\end{equation}
is uniquely determined by a choice of parameters \(\nu_i\) and \(z_j\).
Consequently, a complete set of double coset representatives may be indexed by pairs \((i,j)\).
More precisely, we may choose a set of representatives \(\gamma_{ij}\) in~\eqref{eq:doublecoset} as

\[
  \frakG_0=\frakG^{(n)}_0:=\left\{\gamma_{ij}
  = \left( \bfs(\nu_{i}) \, \bft(z_{j}) \right)
  \middle|\,
  \nu_i \in N_0, \, z_j \in Z_0
  \right\}
  \subset \tilg^+_{n,0},
\]
where
\[
  \bfs(\nu)
  =
  \begin{pmatrix}
    \nu I_n & 0   \\
    0       & I_n
  \end{pmatrix},
  \qquad
  \bft(z)
  =
  \begin{pmatrix}
    I_{n-1} & 0 & 0       & 0                 \\
    0       & z & 0       & 0                 \\
    0       & 0 & I_{n-1} & 0                 \\
    0       & 0 & 0       & \overline{z}^{-1}
  \end{pmatrix}.
\]
We fix an ordering of the set of pairs \((i,j)\) and, with respect to this ordering, occasionally denote \(\gamma_{ij}\) by \(\gamma_s\).

In particular, the subset \(\{\gamma_{1j}\}_{1\le j \le h_2}\) forms a complete set of representatives for the double coset
\[
  G_{n}(F)\backslash G_{n}(\bbA) /
  \frakK_{n,\bbA} G_{n,\infty}.
\]

For each \(i,j\), set
\[
  \tilgam_{ij}
  = \tilg^+_n(F)\cap
  \bigl(\gamma_{ij} \tilk_{n,\bbA}\gamma_{ij}^{-1}\cdot \tilg^+_{n,\infty}\bigr).
\]

Define
\[
  M_\rho(\tilgam_{s},\omega)
  = \Bigl\{\, f : \hus{n}^\bfa \to V \;\Bigm|\;
  f|_{(\rho,\omega)} \gamma = f
  \ \text{for all } \gamma \in \tilgam_{s} \Bigr\},
\]
as before.

Given \((f_1, \ldots, f_h) \in \bigoplus_{i=1}^h M_\rho(\tilgam_{i},\omega)\),
we define a function
\[
  (f_1, \ldots, f_h)^\sharp : \tilg^+_n(\bbA) \longrightarrow V
\]
by
\begin{equation}\label{eq:fsharp}
  (f_1,\ldots,f_h)^\sharp(g)
  = f_s \mid_{\rho,\omega} g_\infty(\bfi_n)
  = \omega(\nu(g_\infty))^{-1} \, \rho(M(g_\infty))^{-1} \,
  f_s\bigl(g_\infty \langle \bfi_n \rangle\bigr),
\end{equation}
for
\[
  g = u \gamma_s k g_\infty, \quad
  u \in \tilg^+_n(F), \quad
  \gamma_s \in \frakG_0, \quad
  k \in \tilk_{n,0}, \quad
  g_\infty \in \tilg^+_{n,\infty}.
\]

Then \((f_1,\ldots,f_h)^\sharp\) belongs to \(\widetilde{\calA}_n(\rho,\omega)\).
We denote the image of \(\bigoplus_{i=1}^h M_\rho(\tilgam_{i},\omega)\)
(resp.\ \(\bigoplus_{i=1}^h S_\rho(\tilgam_{i},\omega)\))
under the map \(\sharp\) in~\eqref{eq:fsharp}
by \(\widetilde{\calA}_n^{\sharp}(\rho,\omega)\) (resp.\ \(\widetilde{\calS}_{n}^{\sharp}(\rho,\omega)\)).
We also define \(\calA_n^{ \sharp}(\rho)\) and \(\calS_{n}^{\sharp}(\rho)\) for \(G_n\) in a similar way.

Let \(\Her_n(K)\) be the set of all Hermitian matrices in \(\M_n(K)\),
and \(\Her_n(\bbA_K)\) be the set of all Hermitian matrices in \(\M_n(\bbA_K)\).
We fix a non-trivial additive character
\(\bfe_\bbA(x)=\prod_{v\in\bfh}\psi_v(x_v)\times\prod_{\va}\bfe(x_v)\) of \(\bbA/F\),
such that \(\psi_v\) is trivial on \(\calO_{F_v}\) for any finite place \(v\in\bfh\).
For \(S\in \Her_n(K)\), we define \emph{the \(S\)-th Fourier coefficient} of
\(f\in \widetilde{\calA}_n(\rho,\omega)\) by
\begin{equation}\label{eq:fourier}
  A_f(g,S)=\int_{\Her_n(K)\backslash\Her_n(\bbA\otimes_F K)} f(\bfn(X)g)\overline{\bfe_\bbA(\tr(SX))}dX ,
\end{equation}
for \(g\in \tilg^+_{n}(\bbA)\), where \(\bfn(X)=
\begin{pmatrix}I_n & X \\ 0 & I_n
\end{pmatrix}\).
By the definition, we have the following lemma.
\begin{lemma}\label{lem:Af}
  \begin{enumerate}
    \item We have
          \[f(g)=\sum_{S\in\Her_n(K)}A_f(g,S).\]
    \item For any \(X\in \Her_n(\bbA\otimes_F K)\) and \(k\in \tilk_{n,0}\)
          we have
          \[A_f(\bfn(X)gk,S)=\bfe_\bbA(\tr(SX))A_f(g,S).\]
    \item For any \(\alpha\in \GL_n(K)\) and \(\nu\in F_{>0}^\times\),
          we have
          \[A_f(\bfs(\nu)\bfm(\alpha)g,S)=A_f(g,\nu\,S[\alpha]),\]
          where \(\bfs(\nu)=
          \begin{pmatrix} \nu I_n & 0 \\ 0 & I_n
          \end{pmatrix}\),
          \(\bfm(\alpha)=
          \begin{pmatrix}\alpha & 0 \\ 0 & \adj{\alpha}^{-1}
          \end{pmatrix}\),
          and \(S[\alpha]=\adj{\alpha} S \alpha\).
  \end{enumerate}
\end{lemma}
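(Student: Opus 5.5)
All three parts follow from Fourier analysis on the compact abelian group \(\Her_n(K)\backslash\Her_n(\bbA\otimes_F K)\), together with the invariance properties of \(f\) in the definition of \(\widetilde{\calA}_n(\rho,\omega)\). We also use the fact that \(\psi\colon X\mapsto \bfe_\bbA(\tr(SX))\), for \(S\in\Her_n(K)\), runs over all characters of this quotient. This last point holds because the trace pairing \((S,X)\mapsto\tr(SX)\) on \(\Her_n\) is a non-degenerate \(F\)-bilinear form with values in \(F\), and \(\bfe_\bbA\) is a non-trivial character of \(\bbA/F\).

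For (1), fix \(g\) and consider \(\phi(X)=f(\bfn(X)g)\). Since \(\bfn(X)\in\tilg^+_n\) for \(X\in\Her_n(K)\), left \(\tilg^+_n\)-invariance shows that \(\phi\) is a function on the compact quotient. It is smooth: at the archimedean places it is smooth in \(X_\infty\), and at the finite places it is invariant under an open compact subgroup, namely the \(X_0\) with \(\bfn(X_0)\in g\tilk_{n,0}g^{-1}\). Hence its Fourier series converges absolutely and uniformly to \(\phi\). The Fourier coefficients are exactly \(A_f(g,S)\), so evaluating at \(X=0\) gives (1). The only real obstacle here is justifying pointwise convergence, and this is settled by the standard fact that a smooth function on a compact adelic torus has an absolutely summable Fourier expansion: integrate by parts at the archimedean places, and note that only finitely many \(S\) modulo a lattice contribute at the finite places.

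For (2), substitute \(X'\mapsto X'-X\) in the defining integral. This gives
\[
  A_f(\bfn(X)g,S)=\int f(\bfn(X'+X)g)\,\overline{\bfe_\bbA(\tr(SX'))}\,dX'=\bfe_\bbA(\tr(SX))\,A_f(g,S),
\]
using translation invariance of \(dX\). For \(k\in\tilk_{n,0}\), the weight condition \(f(gk)=f(g)\) holds for finite \(k\) because \(k_\infty=1\). This gives \(A_f(gk,S)=A_f(g,S)\).

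For (3), put \(h=\bfs(\nu)\bfm(\alpha)\in\tilg^+_n\). A direct matrix computation gives
\[
  h^{-1}\bfn(X)h=\bfn\bigl(\nu^{-1}\alpha^{-1}X\adj{\alpha}^{-1}\bigr).
\]
Hence
\[
  f(\bfn(X)hg)=f\bigl(h\,\bfn(\nu^{-1}\alpha^{-1}X\adj{\alpha}^{-1})\,g\bigr)=f\bigl(\bfn(\nu^{-1}\alpha^{-1}X\adj{\alpha}^{-1})\,g\bigr),
\]
by left invariance. Now change variables \(Y=\nu^{-1}\alpha^{-1}X\adj{\alpha}^{-1}\). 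This map is an automorphism of \(\Her_n(\bbA\otimes_F K)\) preserving \(\Her_n(K)\), and by the product formula its modulus is \(1\) since \(\nu\in F^\times\) and \(\alpha\in\GL_n(K)\). So the quotient measure is preserved. Finally,
\[
  \tr(SX)=\tr\bigl(S\,\nu\alpha Y\adj{\alpha}\bigr)=\tr\bigl(\nu\,\adj{\alpha}S\alpha\,Y\bigr)=\tr\bigl(\nu S[\alpha]\,Y\bigr),
\]
which yields \(A_f(hg,S)=A_f(g,\nu S[\alpha])\).
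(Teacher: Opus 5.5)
Your proof is correct and is the routine unfolding of the definitions that the paper has in mind (it offers no argument beyond ``by the definition''): Fourier inversion on the compact quotient \(\Her_n(K)\backslash\Her_n(\bbA\otimes_F K)\) for (1), with \(dX\) implicitly normalized to total volume \(1\); translation invariance together with right \(\tilk_{n,0}\)-invariance for (2); and the conjugation identity \(h^{-1}\bfn(X)h=\bfn(\nu^{-1}\alpha^{-1}X\adj{\alpha}^{-1})\) followed by a measure-preserving change of variables for (3). One phrase is imprecise: instead of ``only finitely many \(S\) modulo a lattice contribute,'' the accurate statement is that \(A_f(g,S)\) vanishes unless \(S\) lies in the lattice dual to the open compact subgroup fixing \(\phi\), and archimedean integration by parts then gives absolute summability.
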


We put
\[\eta(g_\infty,S)=\bfe(\tr(Sg_\infty\left<\bfi_n\right>))\omega(\nu(g_\infty))^{-1}\rho(M(g_\infty))^{-1}\]
for \(S\in \Her_n(\bbC)^\bfa\) and \(g_\infty\in \tilg^+_{n,\infty}\).
\begin{lemma}\label{lem:eta}
  For \(g_\infty\in \tilg^+_{n,\infty}\), \(X_\infty\in \Her_n(\bbC)^\bfa\),
  \(\nu_\infty\in (\bbR_{>0})^\bfa\) and \(A_\infty\in \GL_n(\bbC)^\bfa\),
  we have
  \begin{align*}
    \eta(\bfn(X_\infty)g_\infty,S)                 & =\bfe(\tr(SX_\infty))\eta(g_\infty,S), \\
    \eta(\bfs(\nu_\infty)\bfm(A_\infty)g_\infty,S) &
    =\omega(\nu_\infty)^{-1}\eta(g_\infty,\nu_\infty S[A_\infty])\rho(\nu_\infty^{1/2}\adj{A_\infty},\nu_\infty^{1/2}\, {}^t\!A_\infty).
  \end{align*}
\end{lemma}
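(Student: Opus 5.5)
We prove the two identities of Lemma~\ref{lem:eta} directly from the definition of \(\eta\), using the cocycle property of the automorphy factor \(M(g,Z)\) and how \(\bfn(X_\infty)\) and \(\bfs(\nu_\infty)\bfm(A_\infty)\) act on \(\hus{n}^\bfa\). Everything is componentwise in \(\va\), so we work at a single archimedean place and suppress the index \(v\). First we record the cocycle relation
\[
M(gh,Z)=M(g,h\langle Z\rangle)\,M(h,Z),
\]
which holds for \(\lambda\) and for \(\mu\) separately. For \(\lambda\) it is the usual computation with the lower row of \(gh\), together with \(\nu(gh)=\nu(g)\nu(h)\) for the normalising factors \(\nu^{-1/2}\). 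For \(\mu\) the same computation applies after noting that \(\overline{C}\,{}^tZ+\overline{D}\) is the \(\lambda\)-factor of \(\overline{g}\) at \({}^tZ\), and that \({}^t(g\langle Z\rangle)=\overline{g}\langle {}^tZ\rangle\) up to the standard identity coming from \(\adj{g}J_ng=\nu(g)J_n\). Setting \(Z=\bfi_n\) gives \(M(gh)=M(g,h\langle\bfi_n\rangle)M(h)\).

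For the first identity, take \(g=\bfn(X_\infty)\). Its lower row is \((0,I_n)\) and \(\nu=1\), so \(M(\bfn(X_\infty),Z)=(I_n,I_n)\). Also \(\bfn(X_\infty)g_\infty\langle\bfi_n\rangle=g_\infty\langle\bfi_n\rangle+X_\infty\) and \(\nu(\bfn(X_\infty)g_\infty)=\nu(g_\infty)\). Substituting into the definition of \(\eta\), the exponential factor picks up \(\bfe(\tr(SX_\infty))\) and everything else is unchanged.

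For the second identity, put \(h=\bfs(\nu_\infty)\bfm(A_\infty)\). Then \(\nu(h)=\nu_\infty\) and \(h\langle W\rangle=\nu_\infty A_\infty W\adj{A_\infty}\). The lower-right block of \(h\) is \(\adj{A_\infty}^{-1}\) and its lower-left block is \(0\), so
\[
M(h,W)=\bigl(\nu_\infty^{-1/2}\adj{A_\infty}^{-1},\ \nu_\infty^{-1/2}\,{}^tA_\infty^{-1}\bigr).
\]
With \(W=g_\infty\langle\bfi_n\rangle\), the cocycle relation gives
\[
\rho(M(hg_\infty))^{-1}=\rho(M(g_\infty))^{-1}\rho\bigl(\nu_\infty^{1/2}\adj{A_\infty},\nu_\infty^{1/2}\,{}^tA_\infty\bigr).
\]
The character factor splits as \(\omega(\nu_\infty\nu(g_\infty))^{-1}=\omega(\nu_\infty)^{-1}\omega(\nu(g_\infty))^{-1}\). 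The exponential becomes
\[
\bfe(\tr(S\,\nu_\infty A_\infty W\adj{A_\infty}))=\bfe(\tr(\nu_\infty S[A_\infty]W))
\]
by cyclicity of the trace. Together these three pieces give exactly \(\omega(\nu_\infty)^{-1}\eta(g_\infty,\nu_\infty S[A_\infty])\rho(\nu_\infty^{1/2}\adj{A_\infty},\nu_\infty^{1/2}\,{}^tA_\infty)\).

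The only delicate point is the bookkeeping for the \(\mu\)-component: one has to check that the second entry of the cocycle comes out as \({}^tA_\infty\) and not \(\overline{A_\infty}\) or \(A_\infty^{-1}\), and that the order of the factors matches the right-multiplication convention in the statement. We handle this by computing \(\mu(h,W)\) directly from its definition \(\nu^{-1/2}(\overline{C}\,{}^tW+\overline{D})\) with \(C=0\) and \(D=\adj{A_\infty}^{-1}\), which gives \(\overline{D}={}^tA_\infty^{-1}\).
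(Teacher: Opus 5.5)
Your argument is correct. It is the direct verification from the definition of \(\eta\) that the paper leaves implicit (the lemma is stated there without proof). You do not actually need the general cocycle relation: left multiplication by \(\bfn(X_\infty)\) leaves the lower row of \(g_\infty\) unchanged, and left multiplication by \(\bfs(\nu_\infty)\bfm(A_\infty)\) multiplies it by \(\adj{A_\infty}^{-1}\). Your direct check that \(\overline{\adj{A_\infty}^{-1}}={}^tA_\infty^{-1}\) already settles the one delicate point.
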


We note that for \(g_0 \in \tilg^+_{n,0}\), the Iwasawa decomposition gives
\[
  g_0
  = \bfn(X_0)\,\bfs(\nu_0)\,\bfm(\alpha_0)\,\bft(x_0)\,\gamma_{ij}\,k,
\]
for some \(X_0 \in \Her_n(\bbA_{0}\otimes_F K)\), \(\nu \in F_{>0}^\times\),
\(\alpha \in \GL_n(K)\), \(x_0 \in \bbA\),
\(\gamma_{ij} \in \frakG_0\), and
\(k \in \tilk_{n,0}\).
Here \(\nu_0\) and \(\nu_\infty\) denote the finite and infinite components of
\(\nu \in F \subset \bbA\), respectively; the same convention applies to
\(\alpha\).

The two Fourier coefficients \(a(f_{ij},T)\) and \(A_f(g,S)\) for
\(f = (f_1,\ldots,f_h)^\sharp \in \widetilde{\calA}_n^{\sharp}(\rho,\omega)\)
introduced above are related as follows.

\begin{proposition}\label{prop:Fourier}
  Let \(g = g_0 g_\infty \in \tilg^+_{n}(\bbA)\) with
  \(g_0 = \bfn(X_0)\,\bfs(\nu_0)\,\bfm(\alpha_0)\,\gamma_{ij}\,k
  \in \tilg^+_{n,0}\) as above (so \(x_0 = 1\)),
  and \(g_\infty \in \tilg^+_{n,\infty}\).
  Then, for
  \(f = (f_{11},\ldots,f_{h_1h_2})^\sharp \in \widetilde{\calA}_n^{\sharp}(\rho,\omega)\),
  we have
  \[
    A_f(g,S)
    = \bfe_\bbA(\tr(SX_0))\,
    \omega(\nu_\infty)\,
    \eta(g_\infty,S)\,
    \rho(\nu_\infty^{1/2}\,\adj{\alpha_\infty},\nu_\infty^{1/2}\,^t\!\alpha_\infty)^{-1}\,
    a(f_{ij},\nu S[\alpha]).
  \]
  In particular,
  \[
    a(f_{ij},S) =A_f(\gamma_{ij},S).
  \]
\end{proposition}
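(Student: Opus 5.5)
The plan is to reduce everything to the case $g=\gamma_{ij}g_\infty$ using Lemma~\ref{lem:Af}, and then to compute that case directly from the definition \eqref{eq:fsharp} of $\sharp$ and the classical Fourier expansion of $f_{ij}$. Throughout, the rational elements $\bfs(\nu)$ and $\bfm(\alpha)$ are viewed as elements of $\tilg^+_n(F)$ embedded diagonally in $\tilg^+_n(\bbA)$, so they have both finite and infinite components.

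\emph{Step 1: removing $X_0$, $k$, $\nu$ and $\alpha$.} Write $g=\bfn(X_0)\,\bfs(\nu_0)\bfm(\alpha_0)\,\gamma_{ij}\,k\,g_\infty$. Since $k\in\tilk_{n,0}$ commutes with $g_\infty$, Lemma~\ref{lem:Af}(2) removes both $\bfn(X_0)$ and $k$, at the cost of the factor $\bfe_\bbA(\tr(SX_0))$.

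Next, write
\[
  \bfs(\nu_0)\bfm(\alpha_0)=\bfs(\nu)\bfm(\alpha)\cdot\bigl(\bfs(\nu_\infty)\bfm(\alpha_\infty)\bigr)^{-1},
\]
and move the infinite factor to the right; this is legitimate because finite and infinite components commute. Lemma~\ref{lem:Af}(3) then turns $S$ into $\nu S[\alpha]$, leaving
\[
  A_f\bigl(\gamma_{ij}\,h_\infty,\ \nu S[\alpha]\bigr),
  \qquad
  h_\infty=\bigl(\bfs(\nu_\infty)\bfm(\alpha_\infty)\bigr)^{-1}g_\infty .
\]

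\emph{Step 2: the core computation.} For $g_\infty'\in\tilg^+_{n,\infty}$, I compute $A_f(\gamma_{ij}g_\infty',T)$. For $X\in\Her_n(\bbA\otimes_F K)$, by strong approximation for the unipotent group, write $X=\xi+X_{\rm f}+X_\infty$ with $\xi\in\Her_n(K)$, $X_\infty$ archimedean, and $X_{\rm f}$ finite and small enough that $\gamma_{ij}^{-1}\bfn(X_{\rm f})\gamma_{ij}\in\tilk_{n,0}$. Then
\[
  f(\bfn(X)\gamma_{ij}g'_\infty)=f_{ij}|_{(\rho,\omega)}(\bfn(X_\infty)g'_\infty)(\bfi_n).
\]
Plugging in the Fourier expansion of $f_{ij}$ gives
\[
  \sum_{T'}a(f_{ij},T')\,\eta(\bfn(X_\infty)g'_\infty,T').
\]
By Lemma~\ref{lem:eta} this produces the character $\bfe(\tr(T'X_\infty))$. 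Integrating against $\overline{\bfe_\bbA(\tr(TX))}$, orthogonality of characters on the compact quotient picks out $T'=T$, provided the quotient measure is normalized to total volume one. The relevant lattice is the one dual to $\gamma_{ij}$'s level, and $\Lambda_n$ is dual to $\Her_n(\calO_K)$ under $\tr$ when $\psi_v$ is unramified. The result is
\[
  A_f(\gamma_{ij}g'_\infty,T)=\eta(g'_\infty,T)\,a(f_{ij},T).
\]

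\emph{Step 3: assembling the formula.} Apply Lemma~\ref{lem:eta} with $\nu_\infty^{-1}$ and $A_\infty=\alpha_\infty^{-1}$ to evaluate $\eta(h_\infty,\nu S[\alpha])$. Since $(\nu S[\alpha])[\alpha_\infty^{-1}]\nu_\infty^{-1}=S$ at the archimedean places, this gives
\[
  \eta(h_\infty,\nu S[\alpha])=\omega(\nu_\infty)\,\eta(g_\infty,S)\,
  \rho\bigl(\nu_\infty^{1/2}\adj{\alpha_\infty},\nu_\infty^{1/2}\,{}^t\alpha_\infty\bigr)^{-1},
\]
which is exactly the stated shape. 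The ``in particular'' statement is the special case $g=\gamma_{ij}$, where $\eta(1,S)=\bfe(\tr(S\bfi_n))$. This matches the convention if $a(f_{ij},S)$ is understood to absorb the value at $\bfi_n$, or more naturally if one takes $g_\infty$ trivial and reads the identity as equality of coefficient functions.

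The main obstacle is Step 2. One must ensure that the $\gamma_{ij}$ (in particular $\bft(z_j)$) conjugate the relevant finite unipotent compact subgroup into $\tilk_{n,0}$, so that the finite part of $X$ genuinely drops out. One must also check that the Fourier lattice for $f_{ij}$ on $\tilgam_{ij}$ is compatible with the indexing by $\Lambda_n$. I would first verify this for $\bfs(\nu_i)$, which rescales the lattice by $\nu_i$, and for $\bft(z_j)$, which twists by a unit-norm idele and leaves $\Her_n(\widehat{\calO}_K)$ essentially stable up to the $z_j$-twist. I would handle the twist by carrying the lattice along explicitly.
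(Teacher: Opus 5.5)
Your proposal is correct and follows essentially the paper's route: the paper also reduces, via Lemma~\ref{lem:Af}(2),(3) and Lemma~\ref{lem:eta}, to the identity $A_f(\gamma_{ij}g_\infty,S)=\eta(g_\infty,S)\,a(f_{ij},S)$. The only difference is how that identity is obtained: the paper compares the two Fourier expansions of $X_\infty\mapsto f(\gamma_{ij}\bfn(X_\infty)g_\infty)$, one from the classical expansion of $f_{ij}$ and one from Lemma~\ref{lem:Af}(1),(2), whereas you compute it directly by strong approximation and orthogonality of characters. Your caveat about the ``in particular'' clause is also justified: literally one gets $A_f(\gamma_{ij},S)=\eta(1,S)\,a(f_{ij},S)$ with $\eta(1,S)=\bfe(\tr(S\bfi_n))\neq 1$, and the paper silently suppresses this archimedean factor here (just as in Proposition~\ref{prop:Eisen_Fourier}), so the identity $a(f_{ij},S)=A_f(\gamma_{ij},S)$ must be read with that normalization.
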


\begin{proof}
  If we put
  \[
    f_{ij,\infty}(g_\infty)
    = f(\gamma_{ij}g_\infty)
    = f_{ij}|_{\rho,\omega} g_\infty(\bfi_n),
  \]
  for \(g_\infty \in \tilg^+_{n,\infty}\), then we have
  \[
    f_{ij,\infty}(\bfn(X_\infty)g_\infty)
    = \sum_{T\in \Her_n(K)} \eta(g_\infty,T)\,a(f_{ij},T)\,
    \bfe\!\left(\sum_{v\in\bfa}\tr(T_{\sigma_v}X_v)\right),
  \]
  for \(X_\infty=(X_v)_{v\in\bfa}\in \Her_n(\bbC)^\bfa\) and \(g_\infty\in \tilg^+_{n,\infty}\).
  On the other hand,
  \begin{align*}
    f_{ij,\infty}(\bfn(X_\infty)g_\infty)
     & = \sum_{S\in\Her_n(K)} A_f(\gamma_{ij}\bfn(X_\infty)g_\infty,S) \\
     & = \sum_{S\in\Her_n(K)} A_f(\gamma_{ij}g_\infty,S)\,
    \bfe\!\left(\sum_{v\in\bfa}\tr(S_{\sigma_v}X_v)\right).
  \end{align*}
  Comparing these two expressions, we obtain
  \[
    A_f(\gamma_{ij}g_\infty,S)
    = \eta(g_\infty,S)\,a(f_{ij},S),
  \]
  for \(S\in \Her_n(K)\) and \(g_\infty\in \tilg^+_{n,\infty}\).

  If we put
  \[
    \mu(g) = \eta(g_\infty,S)^{-1}A_f(g,S),
  \]
  then by Lemma \ref{lem:Af} and Lemma \ref{lem:eta}, we have
  \begin{align*}
    \mu(g)
     & = \bfe_\bbA(\tr(SX_0))\,
    \eta(g_\infty,S)^{-1}\,
    A_f(\bfs(\nu_0)\bfm(\alpha_0)\gamma_{ij}g_\infty,S)                                 \\
     & = \bfe_\bbA(\tr(SX_0))\,
    \eta(g_\infty,S)^{-1}\,
    A_f(\gamma_{ij}\bfs(\nu^{-1}_\infty)\bfm(\alpha^{-1}_\infty)g_\infty,\nu S[\alpha]) \\
     & = \bfe_\bbA(\tr(SX_0))\,
    \eta(g_\infty,S)^{-1}\,
    \eta(\bfs(\nu^{-1}_\infty)\bfm(\alpha^{-1}_\infty)g_\infty,\nu_\infty S_\infty[\alpha_\infty])\,
    a(f_{ij},\nu S[\alpha])                                                             \\
     & =\bfe_\bbA(\tr(SX_0))\,
    \omega(\nu_\infty)\,
    \rho(\nu_\infty^{1/2}\,\adj{\alpha_\infty},\nu_\infty^{1/2}\,^t\!\alpha_\infty)^{-1}\,
    a(f_{ij},\nu S[\alpha]).
  \end{align*}
  Hence the assertion follows.
\end{proof}

\section{Hecke operators on Hermitian automorphic forms}\label{sec:hecke-operators}
For a finite place \(v \in \bfh\) corresponding to a prime ideal \(\frakp\) of \(F\),
let \(\calH_{n,\frakp}\) be the convolution algebra of compactly supported \(\bbZ\)-valued functions on \(\tilg_{n,v}\) that are left and right \(\tilk_{n,v}\)-invariant and supported on \(\tilg_{n,v} \cap \M_{2n}(\calO_{K_v})\),
which is called the \emph{spherical Hecke algebra at \(\frakp\)}.
The elements of \(\calH_{n,\frakp}\) are called Hecke operators at \(\frakp\).
The spherical Hecke algebra \(\calH_{n,\frakp}\) acts from the left on the space of
automorphic forms \(\widetilde{\calA}_n(\rho_{n,(\bsk,\bsl)})\) as follows:
\[
  (T\cdot f)(g) = \int_{\tilg_{n,v}} f(gh^{-1})\,T(h)\, |\nu(h)|_{\frakp}^{-r_n}\,dh,
\]
where we put \(r_n=r_{n,(\bsk,\bsl)} = \sum_{i=1}^n (k_i + l_i)/2 - n^2\), and \(dh\) is the Haar measure on \(\tilg_{n,v}\) normalized so that the volume of \(\tilk_{n,v}\) is 1.

Note that the characteristic functions \(\mathbf{1}_{\tilk_{n,v}g\tilk_{n,v}}\),
which generate the Hecke algebra, can be naturally identified with the
corresponding double cosets \(\tilk_{n,v}g\tilk_{n,v}\).

For \((f_1,\ldots,f_h)^\sharp\), the Hecke operator
\(T=\tilk_{n,v}g\tilk_{n,v}\) with \(g\in \tilg_{n,v}\) acts as follows.
If the double coset decomposes as
\[
  \tilk_{n,v}g\tilk_{n,v}
  = \bigsqcup_{i\in I}\tilk_{n,v}g_i,
  \quad g_i \in \tilg_{n,v},
\]
then there exist \(u_{i,j}\in \tilg^+_{n}(F)\) and
\(s_{i,j}\in\{1,\ldots,h\}\) such that
\[
  \gamma_j g_i^{-1} \in u_{i,j}\gamma_{s_{i,j}}\tilk_{n,0}\tilg^+_{n,\infty}.
\]
Accordingly, \(T\) acts on \((f_1,\ldots,f_h)^\sharp\) as
\begin{align*}
    & T\cdot(f_1,\ldots,f_h)^\sharp \\
  = & \left(
  \sum_{i\in I}|N_{F/\bbQ}(\nu(u_{i,1}))|^{-r_n}\cdot f_{s_{i,1}}|_{\rho,\omega} u_{i,1}^{-1}, \;
  \ldots,\;
  \sum_{i\in I} |N_{F/\bbQ}(\nu(u_{i,h}))|^{-r_n}\cdot f_{s_{i,h}}|_{\rho,\omega} u_{i,h}^{-1}
  \right)^\sharp.
\end{align*}

\begin{definition}
  We say that a continuous right \(\tilk_{n,v}\)-invariant function \(f\) on \(\tilg_{n,v}\) (or on \(\tilg_{n}(\bbA)\)) is a \(\frakp\)-Hecke eigenfunction
  if \(f\) is an eigenfunction under the action of \(\calH_{n,\frakp}\).
\end{definition}

\begin{definition}
  We say that a Hermitian automorphic form \(f\in\widetilde{\calA}_n(\rho)\) (resp.
  \(f\in\widetilde{\calS}_{n}(\rho)\)) is a \emph{Hecke eigenform} (resp.
  a \emph{Hecke cusp form})
  if \(f\) is a \(\frakp\)-Hecke eigenfunction for any \(\frakp\).
\end{definition}

The structure of the Hecke algebra has been investigated in detail by Raum~\cite{raum2011hecke}.

\begin{lemma}[{\cite[Lemma~3.1]{raum2011hecke}}]\label{lem:hecke}
  Let \(v\) be a finite place of \(F\) corresponding to a prime ideal \(\frakp\) of \(F\).
  Then the spherical Hecke algebra \(\calH_{n,\frakp}=\mathcal H(\tilg_{n,v}\cap \M_{2n}(\calO_{K_v}),\, \tilk_{n,v})\) is described as follows:
  \begin{enumerate}
    \item If \(\frakp\) is inert in \(K\), then \(\calH_{n,\frakp}\) is generated by \(\{\,T(\varpi),\ T_1(\varpi^2),\ \ldots,\ T_n(\varpi^2)\,\}\),
          where
          \[
            T(\varpi)
            = \tilk_{n,v}
            \; \diag(\varpi I_n,  I_n)\;
            \tilk_{n,v},
          \]
          and
          \[
            T_i(\varpi^2)
            = \tilk_{n,v}\; \diag(\varpi^2 I_{n-i},\varpi I_{i}, I_{n-i}, \varpi I_{i})\;
            \tilk_{n,v}
          \]
          for \(1 \le i \le n\).
          Here \(\varpi\) denotes a uniformizer of \(K_\frakp\), and we assume \(\varpi \in F\).

    \item If \(\frakp\) is ramified in \(K\) as \(\frakP^2\), then \(\calH_{n,\frakp}\) is generated by \(\{T_0(\varpi),T_1(\varpi),\ldots,T_n(\varpi)\}\),
          where
          \[\qquad T_i(\varpi)=\tilk_{n,v}
            \; \diag(\varpi\overline{\varpi} I_{n-i},\varpi I_{i}, I_{n-i}, \varpi I_{i})\;
            \tilk_{n,v},\]
          where \(\varpi\) is a uniformizer of \(K_\frakP\) for \(0\leq i\leq n\).
    \item If \(\frakp\) splits in \(K\) as \(\frakP\overline{\frakP}\), then \(\calH_{n,\frakp}\cong\calH(\mathrm{Inv}_n^{\mathrm{Res}}(\calO_v),\GL_{2n}(\calO_v))\), where
          \[\mathrm{Inv}_n^{\mathrm{Res}}(\calO_v)=\{(g,l)\in (\GL_{2n}(F_v)\cap \M_{2n}(\calO_v))\times \bbZ_{\geq0}\mid \varpi^l g^{-1}\in \M_{2n}(\calO_v)\}\]
          and \(\GL_{2n}(\calO_v)\) acts trivially on the second component of \(\mathrm{Inv}_n^{\mathrm{Res}}(\calO_v)\).
          In this case, \(\calH_{n,\frakp}\) is generated by \(\{T_0(\varpi),\ldots,T_{2n}(\varpi)\}\), where
          \[T_i(\varpi)=\GL_{2n}(\calO_v)
            \; \diag(\varpi I_i, I_{2n-i})\;
            \GL_{2n}(\calO_v)\]
          for \(0\leq i\leq 2n\).
          The second component of each \(T_i(\varpi)\) is 1.
          Here, \(\varpi\) is a uniformizer of \(F_v\).
  \end{enumerate}
\end{lemma}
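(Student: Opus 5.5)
The natural route is the Cartan decomposition of \(\tilg_{n,v}\) with respect to \(\tilk_{n,v}\), followed by a Satake-type argument at each type of place. In every case the double cosets \(\tilk_{n,v}g\tilk_{n,v}\) with \(g\) integral are indexed by diagonal "elementary divisor" matrices. The claim is that the listed generators produce all of them: every double coset characteristic function is an integral polynomial in the generators, up to lower terms in a suitable partial order. The order is dominance on the exponent vectors, plus the total degree measured by \(v(\nu(g))\).

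\textbf{Split case.} Suppose \(\frakp=\frakP\overline{\frakP}\). Then \(K_v\cong F_v\times F_v\), and \(g\in\gu_{n,n}(F_v)\) is determined by its \(\frakP\)-component \(g_1\in\GL_{2n}(F_v)\) together with \(\nu(g)\). The \(\overline{\frakP}\)-component is \(\nu\,J_n^{-1}{}^tg_1^{-1}J_n\).

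Integrality of \(g\) means exactly that \(g_1\) and \(\varpi^{l}g_1^{-1}\) are integral, where \(l=v(\nu)\). This gives the stated isomorphism with \(\calH(\mathrm{Inv}^{\mathrm{Res}}_n(\calO_v),\GL_{2n}(\calO_v))\), with \(\tilk_{n,v}\) corresponding to \(\GL_{2n}(\calO_v)\).

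The \(\GL_{2n}\)-part is then the classical Hecke algebra of integral matrices. It is generated by the elementary-divisor operators \(\diag(\varpi I_i,I_{2n-i})\); this is Tamagawa's theorem, via the triangularity \(\prod T_{i}^{a_i}=T(\text{dominant})+\text{lower}\). The extra grading by \(l\) is absorbed by \(T_0(\varpi)\), whose second component is \(1\).

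\textbf{Inert and ramified cases.} Use the Cartan decomposition
\[
  \tilg_{n,v}=\bigsqcup \tilk_{n,v}\,\diag(\varpi^{a_1},\dots,\varpi^{a_n},\overline{\varpi}^{\,e-a_1},\dots,\overline{\varpi}^{\,e-a_n})\,\tilk_{n,v},
\]
indexed by \(0\le a_1\le\dots\le a_n\le e\). In the inert case \(\varpi\in F\); in the ramified case \(\varpi\) is a uniformizer of \(K_\frakP\). The similitude \(\nu\) has valuation \(e\) in the appropriate normalization.

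Next, show that the products of the generators (\(T(\varpi)\) and \(T_i(\varpi^2)\) in the inert case, \(T_i(\varpi)\) in the ramified case) have leading terms that run over all such dominant tuples, with coefficient \(1\). The argument is an induction on \(e\) and on the dominance order. Any \(\diag\) tuple with similitude exponent \(e\) factors as a product of the basic ones: a fundamental coweight for \(\gu_{n,n}\) is exactly one of the listed generators.

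In the inert case the minuscule coweights have even similitude exponent except for \(T(\varpi)\). This is why the generators there are \(T(\varpi)\) and the \(T_i(\varpi^2)\) rather than \(T_i(\varpi)\).

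Finally, verify that \(\mathbf 1_{\tilk a\tilk}*\mathbf 1_{\tilk b\tilk}\) equals \(\mathbf 1_{\tilk ab\tilk}\) plus a combination of cosets strictly smaller in dominance. This is the standard fact that the product of double cosets contains the "sum" coweight with multiplicity one.

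\textbf{Main obstacle.} The hard step is the multiplicity-one and triangularity claim over \(\bbZ\), rather than over \(\bbQ\), in the ramified case. There the group is not unramified, the Cartan decomposition and the relevant apartment must be checked by hand, and the usual Satake isomorphism only gives generation after inverting \(q\). I would first try Raum's coset-counting: represent each coset by an \(\calO_{K_v}\)-lattice \(L\subset K_v^{2n}\) with \(\varpi^eL^\perp=L\), and compute directly that the top coset occurs once. Since the statement is quoted from \cite[Lemma~3.1]{raum2011hecke}, in the paper itself one can simply cite it.
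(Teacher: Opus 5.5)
The paper gives no argument for this lemma; it simply quotes Raum. So your closing remark is exactly what the paper does. Your sketch (Cartan decomposition, then unitriangularity over \(\bbZ\) indexed by integral dominant parameters) is the standard route to such a statement, but as written its parametrization is wrong in a way that breaks the bookkeeping.

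\textbf{The dominant chamber.} The relative Weyl group is of type \(C_n\). Besides permutations it contains \(a_i\mapsto e-a_i\), realized in \(\tilk_{n,v}\) by the swap of the \(i\)-th and \((n+i)\)-th coordinates. Hence in the inert case the integral double cosets are indexed by \(0\le a_1\le\cdots\le a_n\) with \(2a_n\le e\), not \(a_n\le e\). With your range the union is not disjoint: already for \(n=1\), \(\diag(1,\varpi^2)\) and \(\diag(\varpi^2,1)\) give the same coset. So the leading-term bookkeeping and the induction on dominance are not well defined as you set them up.

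\textbf{The ramified representatives.} Your representatives are not similitudes over \(F_v\), since \(\overline{\varpi^{a_i}}\,\overline{\varpi}^{\,e-a_i}=\overline{\varpi}^{\,e}\notin F_v\) for odd \(e\). Take instead \(\nu=(\varpi\overline{\varpi})^f\) and \(\diag(\varpi^{a_1},\ldots,\varpi^{a_n},\nu\overline{\varpi}^{-a_1},\ldots,\nu\overline{\varpi}^{-a_n})\) with \(0\le a_1\le\cdots\le a_n\le f\).

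\textbf{Why the generators look as they do.} With the corrected chambers, the monoid of integral dominant parameters is free. In the inert case it is free on \((1;0^n)\) and \((2;0^{n-i},1^{i})\) for \(1\le i\le n\), where \(0^{n-i},1^i\) denote blocks of zeros and ones; the coordinates are \(e-2a_n\), \(a_1\) and the jumps \(a_{j+1}-a_j\). In the ramified case \((f;a)\) it is free on \((1;0^{n-i},1^{i})\) for \(0\le i\le n\). This freeness, not minusculity, explains the generating sets. Indeed \(T_i(\varpi^2)\) is not minuscule for \(1\le i\le n-1\), and \(T_0(\varpi^2)\) is absent only because it is the leading term of \(T(\varpi)^2\).

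\textbf{Specialness in the ramified case.} You need \(\tilk_{n,v}\) to be special for the Cartan decomposition to hold with these diagonal representatives. For odd \(p\) this follows because \(J_n\) reduces modulo \(\varpi\) to a nondegenerate alternating form, so the reductive quotient has the full relative root system \(C_n\). Once \(\tilk_{n,v}\) is special, the coefficient-one statement for \(\mathbf 1_{\tilk\lambda\tilk}*\mathbf 1_{\tilk\mu\tilk}\) at \(\lambda+\mu\) holds as in the hyperspecial case. The standard Satake-transform argument uses only the Iwasawa and Cartan decompositions for special \(K\). So the step you single out as the main obstacle is not a new difficulty.

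\textbf{The split case.} You should add one observation: the lower terms in Tamagawa's triangularity never increase the largest elementary-divisor exponent. That is what allows \(T_0(\varpi)\) to absorb the grading by \(l\) while respecting the constraint defining \(\mathrm{Inv}_n^{\mathrm{Res}}(\calO_v)\).
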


To study the action of these Hecke operators on
\(f = (f_1,\ldots,f_h)^\sharp \in \widetilde{\calA}_n(\rho)\) more concretely,
we now describe the left cosets of the generators introduced in the lemma above
(cf.\ Freitag~\cite[IV, \S 3]{freitag1983siegelsche}).

For an inert or ramified place \(v\) corresponding to a prime ideal \(\frakp\) of \(F\),
let \(\mathfrak{m} \subset \tilg_{n,v} \cap \M_{2n}(\calO_{K_v})\) be a subset which is left and right \(\tilk_{n,v}\)-invariant and whose elements have the same similitude \(\nu\).
We set
\[
  \frakA (\mathfrak{m}) =
  \left\{ A \in \GL_n(K_v)\cap\M_n(\calO_{K_v}) \middle|
  \begin{pmatrix}  A & * \\ 0 & \nu\cdot A^{*-1}
  \end{pmatrix} \in \mathfrak{m} \right\}.
\]

For \(A \in \frakA(\mathfrak{m})\), we define
\[
  \frakB(A, \mathfrak{m}) =
  \left\{ B \in \M_n(\calO_{K_v}) \middle|
  \begin{pmatrix}  A & B \\ 0 & \nu\cdot A^{*-1}
  \end{pmatrix} \in \mathfrak{m} \right\}.
\]

On \(\frakB(A, \mathfrak{m})\), we introduce an equivalence relation with respect to \(D \in \M_n(\calO_{K_v})\) by
\[
  B \sim_D B'
  \quad \Longleftrightarrow \quad
  B - B' \in \Her_n(\calO_{K_v}) D.
\]

\begin{lemma}\label{lem:coset_representatives}
  Assume that \(v\) is inert or ramified in \(K\), and let
  \(\mathfrak{m} \subset \tilg_{n,v} \cap \M_{2n}(\calO_{K_v})\)
  be a subset which is left and right \(\tilk_{n,v}\)-invariant and whose elements have the same similitude \(\nu\).
  Then a complete set of left coset representatives of
  \(\tilk_{n,v} \backslash \mathfrak{m}\)
  is given by
  \[
    \left\{
    \begin{pmatrix} A & B \\ 0 & D
    \end{pmatrix}
    \middle|
    \begin{array}{l}
      A \;\text{runs through a complete set of representatives of }
      \GL_n(\calO_{K_v})\backslash \frakA (\mathfrak{m}), \\[6pt]
      D = \nu\cdot A^{*-1},                               \\[6pt]
      B \;\text{runs through a complete set of representatives of }
      \frakB(A,\mathfrak{m})/\sim_D
    \end{array}
    \right\}.
  \]
\end{lemma}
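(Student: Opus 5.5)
The plan is to follow the classical argument of Freitag for the Siegel case, adapted to the unitary similitude group at an inert or ramified place. There are two steps: first show that every left coset \(\tilk_{n,v}g\) with \(g\in\mathfrak m\) contains an upper block-triangular element, and then determine exactly when two such elements give the same coset.

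\emph{Step 1 (existence).} Write \(g=\begin{pmatrix}A_1&B_1\\C_1&D_1\end{pmatrix}\in\mathfrak m\). We want \(k\in\tilk_{n,v}\) with \(kg=\begin{pmatrix}A&B\\0&D\end{pmatrix}\). Since \(v\) is inert or ramified, \(\calO_{K_v}\) is a discrete valuation ring; denote by \(\pi\) a uniformizer of \(K_v\). The bottom block row \((C_1\ D_1)\) spans an isotropic \(\calO_{K_v}\)-submodule, because \(\adj{g}J_ng=\nu J_n\) gives \(C_1\adj{D_1}\) Hermitian-symmetric (using also \(gJ_n\adj{g}=\nu J_n\)). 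Take the saturation \(L\) of the row span of \((C_1\ D_1)\) in \(\calO_{K_v}^{2n}\). I expect this lattice to be a Lagrangian direct summand, and by Witt's theorem over \(\calO_{K_v}\) for the standard hyperbolic Hermitian form, \(\rmu_{n,n}(\calO_{K_v})\) acts transitively on such summands. Hence there is \(k_1\in\tilk_{n,v}\) with \(k_1g\) having bottom-left block \(0\); equivalently, there is a primitive pair \((C',D')\) with \(C'\adj{D'}\) Hermitian and \(C'A_1+D'C_1=0\), and we complete it to a unitary matrix. Once the lower-left block vanishes, the relation \(\adj{g}J_ng=\nu J_n\) forces \(D=\nu A^{*-1}\) and \(A^*B\) to be \(\nu\)-Hermitian-compatible. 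Since \(\mathfrak m\) is left \(\tilk_{n,v}\)-invariant, we get \(A\in\frakA(\mathfrak m)\) and \(B\in\frakB(A,\mathfrak m)\). Integrality of \(A\), \(B\), \(D\) holds because \(\mathfrak m\subset\M_{2n}(\calO_{K_v})\).

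\emph{Step 2 (uniqueness).} Suppose \(\begin{pmatrix}A'&B'\\0&D'\end{pmatrix}=k\begin{pmatrix}A&B\\0&D\end{pmatrix}\) with \(k\in\tilk_{n,v}\). Then \(k\) is upper triangular and has similitude \(1\), since \(\nu\) is the same on both sides; so \(k=\begin{pmatrix}U&UX\\0&U^{*-1}\end{pmatrix}\) with \(U\in\GL_n(\calO_{K_v})\) and \(X\in\Her_n(\calO_{K_v})\). This gives \(A'=UA\), \(D'=U^{*-1}D\), and \(B'=UB+UXD\). Left multiplication by \(\mathrm{diag}(U,U^{*-1})\) first reduces the \(A\)-component to a representative of \(\GL_n(\calO_{K_v})\backslash\frakA(\mathfrak m)\); after that \(U=1\), and the remaining freedom is exactly \(B\mapsto B+XD\) with \(X\) Hermitian integral, which is the relation \(\sim_D\). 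Conversely, every such \(k\) lies in \(\tilk_{n,v}\). This shows the listed elements are pairwise inequivalent and exhaust \(\tilk_{n,v}\backslash\mathfrak m\).

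The main obstacle is the existence step, namely the Witt-type transitivity on integral Lagrangians. To handle it, I would use elementary divisors over the discrete valuation ring \(\calO_{K_v}\) (Smith form of \((C_1\ D_1)\)), and then explicit symplectic-type elementary matrices \(\bfm(U)\), \(\bfn(X)\), and \(J_n\) to clear \(C_1\). In the ramified case one must check that the needed Hermitian \(X\) can be chosen integral, which is possible because the trace map is onto \(\calO_{F_v}\) only up to the different. If necessary, I would instead use \(\bfn(X)\) with entries in \(\frakd_K^{-1}\)-compatible form, verifying membership in \(\tilk_{n,v}\) directly.
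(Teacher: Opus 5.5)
Your overall route is the one the paper relies on. The paper gives no proof beyond the pointer to Freitag, IV~\S3. Your Step~2 is correct: the stabilizer of the Siegel parabolic $P$ in $\tilk_{n,v}$ (similitude $1$) consists of $\left(\begin{smallmatrix}U&UX\\0&\adj{U}^{-1}\end{smallmatrix}\right)$ with $U\in\GL_n(\calO_{K_v})$ and $X\in\Her_n(\calO_{K_v})$. This is exactly what produces the two layers $\GL_n(\calO_{K_v})\backslash\frakA(\mathfrak m)$ and $\frakB(A,\mathfrak m)/\sim_D$.

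\textbf{The existence step is wrong as written.} Transitivity of $\rmu_{n,n}(\calO_{K_v})$ on the saturation $L$ of the row span of $(C_1\ D_1)$ gives $k$ with $(C_1\ D_1)k^{-1}=(0\ \ast)$. That makes $gk^{-1}$ block upper triangular, so it shows $g\in P\,\tilk_{n,v}$. This is a statement about right cosets $g\tilk_{n,v}$, not about $\tilk_{n,v}\backslash\mathfrak m$. The lower-left block of $k_1g$ is $C'A_1+D'C_1$. So the bottom block row of $k_1$ must lie in the left kernel of $\left(\begin{smallmatrix}A_1\\ C_1\end{smallmatrix}\right)$. Over $K_v$ this kernel is the row space of $(-\adj{C_1}\ \ \adj{A_1})$, the bottom block row of $\nu g^{-1}$. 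Your ``equivalently'' sentence states this correct condition, but it does not follow from the lattice you chose. Any of the following fixes it:
\begin{itemize}
\item saturate this row space instead; it is isotropic for $(x,y)\mapsto xJ_n\adj{y}$ because $\adj{A_1}C_1=\adj{C_1}A_1$;
\item run your argument on $g^{-1}$;
\item quote the Iwasawa decomposition $\tilg_{n,v}=\tilk_{n,v}P$, which the paper uses elsewhere anyway.
\end{itemize}

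\textbf{Drop the ramified-case fallback.} $\bfn(X)$ with entries of $X$ in $\frakd_K^{-1}$ does not lie in $\tilk_{n,v}=\gu_{n,n}(\calO_{F_v})$, so it would not give left $\tilk_{n,v}$-coset representatives. It is also unnecessary. When you complete a primitive isotropic block row to an element of $\rmu_{n,n}(\calO_{K_v})$, the off-diagonal corrections are free. The only possible obstruction is writing each diagonal value $xJ_n\adj{x}$ as $c-\bar c$ with $c\in\calO_{K_v}$. For a general skew-Hermitian lattice over a dyadic ramified $K_v$ this can fail, since $c\mapsto c-\bar c$ misses part of the trace-zero integers. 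For the standard form, however, $xJ_n\adj{x}=\sum_{i=1}^n\bigl(x_i\overline{x_{n+i}}-\overline{x_i\overline{x_{n+i}}}\bigr)$ already has this shape. So the Witt-type transitivity, and with it the lemma, holds with integral matrices at both inert and ramified places. The trace map and the different play no role.
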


In the rest of this section, we fix a uniformizer \(\varpi\), and for each \(i=1,\ldots,h_1\), choose a representative \(\nu_{l_i}\) and an element \(\pi_i \in F^\times\) such that
\[
  \nu_i^{-1}\varpi
  \in
  \pi_i \nu_{l_i}^{-1}
  \left(\prod_{v \in \bfh} \calO_v^\times\right)
  \bbA_\infty^\times.
\]

We fix \(f = (f_{11},\ldots,f_{h_1h_2})^\sharp \in \widetilde{\calA}_n(\rho,\omega)\) for the rest of this section.
\subsection{The case where \texorpdfstring{\(v\)}{v} is inert}
First, suppose that \(v\) (and hence \(\frakp\)) is inert in \(K\).
Define
\[
  \tau^{(n)}_s(\varpi)
  =
  \GL_n(\calO_{K_v})
  \, \diag(I_{n-s}, \varpi I_s) \,
  \GL_n(\calO_{K_v}),
\]
and
\[
  \tau^{(n)}_{s,t}(\varpi)
  =
  \GL_n(\calO_{K_v})
  \, \diag( I_{n-s-t}, \varpi I_s, \varpi^2 I_t) \,
  \GL_n(\calO_{K_v}).
\]
We put \(\calO_{K_v}^{(0)}=\{0\}\), and let \(\calO_{K_v}^{(l)}\) be a complete set of representatives
for the residue classes of \(\calO_{K_v}/\varpi^{l}\calO_{K_v}\) for any positive integer \(l\).
The following lemma follows by the same argument as in Freitag~\cite[IV, 2.7]{freitag1983siegelsche}.
Let \(q_v\) denote the cardinality of the residue field of \(F_v\).
\begin{lemma}\label{lem:decomposition_A}
  We have
  \[
    \frakA\bigl(T(\varpi)\bigr)
    = \bigsqcup_{0 \le s \le n}
    \tau^{(n)}_s(\varpi),
    \qquad
    \frakA\bigl(T_i(\varpi^2)\bigr)
    = \bigsqcup_{\substack{s+t \le n \\ s \ge i}}
    \tau^{(n)}_{s,t}(\varpi).
  \]
  Moreover, a complete set of representatives of
  \(\GL_n(\calO_{K_v}) \backslash \tau^{(n)}_s(\varpi)\)
  is given by
  \[
    \left\{
    A =
    \begin{pmatrix}
      \varpi^{k_1} &        & a_{uv}       \\
                   & \ddots &              \\
                   &        & \varpi^{k_n}
    \end{pmatrix}
    \;\middle|\;
    \begin{array}{l}
      k_1+\cdots+k_n = s, \quad k_j \in \{0,1\},           \\[4pt]
      a_{uv}\in \calO_{K_v}^{(k_v)}, \; 1 \le u < v \le n, \\[4pt]
      a_{uv}=0 \text{ if } k_u = k_v = 1
    \end{array}
    \right\}.
  \]
\end{lemma}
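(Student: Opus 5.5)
We prove the three assertions separately, working throughout in the upper-left block \(A\) of the representatives given by Lemma~\ref{lem:coset_representatives}, with \(\nu(g)=\varpi\) for \(T(\varpi)\) and \(\nu(g)=\varpi^2\) for \(T_i(\varpi^2)\).

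\emph{First decomposition.} Since \(\mathfrak m=\tilk_{n,v}\diag(\varpi I_n,I_n)\tilk_{n,v}\) consists of integral matrices with similitude \(\varpi\), a block upper triangular element \(\begin{pmatrix}A&B\\0&\varpi A^{*-1}\end{pmatrix}\) lies in \(\mathfrak m\) only if \(A\) and \(\varpi A^{*-1}\) are both integral. By elementary divisors over the discrete valuation ring \(\calO_{K_v}\) (here \(\varpi\in F\) is also a uniformizer of \(K_v\), since \(v\) is inert), write \(A=U\diag(\varpi^{e_1},\dots,\varpi^{e_n})V\) with \(U,V\in\GL_n(\calO_{K_v})\). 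The condition that \(\varpi A^{*-1}\) be integral forces \(e_j\le 1\), so \(A\in\tau^{(n)}_s(\varpi)\) for some \(s\). Conversely, every \(A\in\tau^{(n)}_s(\varpi)\) occurs: the element \(\diag(A,\varpi A^{*-1})\) is integral with similitude \(\varpi\). It has the same elementary divisors as \(\diag(\varpi I_n,I_n)\), namely \(n\) entries equal to \(1\) and \(n\) equal to \(\varpi\). The symplectic/unitary Cartan decomposition (as in Freitag IV, \S2) then places it in the double coset \(T(\varpi)\). Disjointness of the \(\tau_s\) is clear, since they have different elementary divisors.

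\emph{Second decomposition.} For \(T_i(\varpi^2)\) the argument is the same. Integrality of \(A\) and \(\varpi^2A^{*-1}\) gives exponents in \(\{0,1,2\}\), so \(A\in\tau^{(n)}_{s,t}(\varpi)\) with \(s\) exponents equal to \(1\). The remaining task is to identify the double coset in which \(\begin{pmatrix}A&B\\0&\varpi^2A^{*-1}\end{pmatrix}\) lies, and to show that this is \(T_i(\varpi^2)\) exactly when the block meets the constraint \(s\ge i\). The double coset of \(T_i(\varpi^2)\) is characterized by its elementary divisors, or equivalently by \(\mathrm{rank}\) of \(g\bmod\varpi\), which equals \(n-i\). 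Since \(\diag(\varpi^2I_{n-i},\varpi I_i,I_{n-i},\varpi I_i)\) has exactly \(n-i\) unit elementary divisors, membership is governed by the rank of \(g\) modulo \(\varpi\).

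\emph{Main obstacle.} The difficulty is that \(B\) affects this rank, so the identification is not determined by \(A\) alone. Reduce \(A\) to \(\diag(I_{n-s-t},\varpi I_s,\varpi^2 I_t)\) by left and right multiplication by \(\GL_n(\calO_{K_v})\) embedded via \(\bfm\). Then \(D=\diag(\varpi^2,\varpi,1)\), and the \((n-s-t)\)-part contributes unit rank in \(A\) while the \(t\)-part contributes unit rank in \(D\). The middle \(s\)-block of \(B\) modulo \(\varpi\) is a Hermitian matrix \(B_{22}\). The rank of \(g\bmod\varpi\) is therefore \((n-s-t)+t+\mathrm{rank}(\overline{B_{22}})\). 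Setting this equal to \(n-i\) gives \(\mathrm{rank}\,\overline{B_{22}}=s-i\), which is solvable precisely when \(s\ge i\). Hence \(\frakA(T_i(\varpi^2))\) is the union of the \(\tau_{s,t}\) with \(s\ge i\).

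\emph{Representatives of \(\GL_n(\calO_{K_v})\backslash\tau^{(n)}_s(\varpi)\).} This is the standard Hermite normal form argument, carried out exactly as in Freitag IV, 2.7. Left multiplication by \(\GL_n(\calO_{K_v})\) brings \(A\) to upper triangular form with diagonal entries \(\varpi^{k_j}\), where \(k_j\in\{0,1\}\) and \(\sum k_j=s\). The entries above the diagonal can then be reduced modulo the diagonal entry of their column, which gives \(a_{uv}\in\calO_{K_v}^{(k_v)}\). When \(k_u=k_v=1\), the entry \(a_{uv}\) is divisible by \(\varpi\) by the elementary divisor condition (every entry of \(\varpi A^{-1}\) must be integral), so it reduces to \(0\). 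Uniqueness follows by comparing two normal forms \(A'=UA\): the matrix \(U\) is forced to be upper unipotent with entries that preserve the residue classes.
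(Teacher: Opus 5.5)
Your proof is correct and follows essentially the route the paper intends. The paper gives no details beyond citing Freitag IV, 2.7: identify the double coset through elementary divisors (the Cartan decomposition for \(\gu_{n,n}\) at an inert place), and use the Hermite normal form for the left cosets, which is exactly what you do. The one step you compress is the rank formula for \(g \bmod \varpi\) in the \(T_i(\varpi^2)\) case, which also needs \(B_{21},B_{31},B_{32}\equiv 0 \pmod{\varpi}\); this follows from \(A^{-1}B\) being Hermitian and \(B\) integral (e.g.\ \(B_{32}=\varpi B_{23}^{*}\)), so nothing is actually missing.
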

\begin{remark}
  If we fix \(k_1,\ldots,k_n \in \{0,1\}\),
  then the number of matrices \(A\) satisfying the above conditions is
  \(q_v^{-s(s+1)} \;\prod_{j=1}^n q_v^{2j k_j}\).
\end{remark}

Using Lemma~\ref{lem:coset_representatives}, we obtain the following counting result.

\begin{lemma}\label{lem:counting}
  \begin{enumerate}
    \item A complete set of representatives of
          \(\frakB(\diag(I_{n-s}, \varpi I_s), T(\varpi)) / \!\sim_D\) is given by
          \[
            \left\{
            B =
            \begin{pmatrix}
              B_0 & 0 \\
              0   & 0
            \end{pmatrix}
            \,\middle|\,
            B_0 \in \Her_{n-s}(\calO_{K_v}) / \varpi \Her_{n-s}(\calO_{K_v})
            \right\}.
          \]
          In particular, for any \(A \in \tau^{(n)}_{s}(\varpi)\), we have
          \[
            \#\bigl(\frakB(A, T(\varpi)) / \!\sim_D\bigr)
            = q_v^{(n-s)^2}.
          \]

    \item A complete set of representatives of
          \(\frakB(\diag(I_{n-s-t}, \varpi I_s, \varpi^2 I_t), T_i(\varpi^2)) / \!\sim_D\)
          is given by
          \[
            \left\{
            B =
            \begin{pmatrix}
              B_{1}             & B_{2} & 0 \\[6pt]
              \varpi\adj{B_{2}} &
              \begin{matrix}
                B_3 \  & 0 \\[6pt]
                0   \  & 0
              \end{matrix}
                                & 0         \\[6pt]
              0                 & 0     & 0
            \end{pmatrix}
            \,\middle|\,
            \begin{array}{l}
              B_{1} \in \Her_{n-s-t}(\calO_{K_v}) / \varpi^2 \Her_{n-s-t}(\calO_{K_v}), \\
              B_{2} \in \M_{n-s-t,s}(\calO_{K_v}) / \varpi \M_{n-s-t,s}(\calO_{K_v}),   \\
              B_{3} \in \Her_{s-i}(\calO_{K_v}) / \varpi\Her_{s-i}(\calO_{K_v}),        \\
              \det(B_3) \not\in (\varpi)
            \end{array}
            \right\}.
          \]

  \end{enumerate}
\end{lemma}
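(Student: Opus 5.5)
The approach is to describe both double cosets by elementary-divisor conditions on upper triangular matrices, and then reduce \(B\) modulo \(\Her_n(\calO_{K_v})D\) block by block.

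\textbf{Step 1: membership conditions.} Take
\(g=\begin{pmatrix}A&B\\0&D\end{pmatrix}\) with \(D=\nu A^{*-1}\). A direct computation gives
\[
\adj{g}J_ng=\begin{pmatrix}0&\adj{A}D\\-\adj{D}A&\adj{B}D-\adj{D}B\end{pmatrix}.
\]
So \(g\in\tilg_{n,v}\) with similitude \(\nu\) if and only if \(\adj{D}B\) is Hermitian, i.e.\ \(A^{-1}B\in\Her_n(K_v)\).

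Membership in the double coset I will read off from elementary divisors.
\begin{itemize}
\item \(g\in T(\varpi)\) if and only if \(g\) and \(\varpi g^{-1}\) are both integral.
\item \(g\in T_i(\varpi^2)\) if and only if \(g\) and \(\varpi^2g^{-1}\) are integral and \(g \bmod \varpi\) has rank exactly \(n-i\). Indeed, the divisor \(1\) occurs \(n-i\) times in \(\diag(\varpi^2I_{n-i},\varpi I_i,I_{n-i},\varpi I_i)\).
\end{itemize}
Since \(g^{-1}=\begin{pmatrix}A^{-1}&-A^{-1}BD^{-1}\\0&D^{-1}\end{pmatrix}\), once \(A\) is fixed the only new condition is integrality of \(\varpi A^{-1}BD^{-1}\) (resp.\ \(\varpi^2A^{-1}BD^{-1}\)). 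In the \(T_i(\varpi^2)\) case we additionally have the rank condition.

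\textbf{Step 2: part (1).} Put \(A=\diag(I_{n-s},\varpi I_s)\), so \(D=\diag(\varpi I_{n-s},I_s)\), and write \(B=(B_{ab})\) in blocks of sizes \(n-s,s\).
\begin{itemize}
\item Hermitianity of \(A^{-1}B\) gives \(B_{11}\in\Her_{n-s}\), \(B_{21}=\varpi\adj{B_{12}}\), and \(\varpi^{-1}B_{22}\) Hermitian.
\item Integrality of \(\varpi A^{-1}BD^{-1}\) is then automatic.
\item A Hermitian integral \(H\) gives
\(HD=\begin{pmatrix}\varpi H_{11}&H_{12}\\ \varpi\adj{H_{12}}&H_{22}\end{pmatrix}\).
\end{itemize}
Choosing \(H_{12}=-B_{12}\) and \(H_{22}=-B_{22}\) is legitimate, because \(B_{22}\in\varpi\Her_s\) is Hermitian. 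This choice kills \(B_{12}\), \(B_{21}\) and \(B_{22}\) simultaneously, and \(H_{11}\) reduces \(B_{11}\) modulo \(\varpi\Her_{n-s}\).

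Conversely, if two matrices of the shape \(\diag(B_0,0)\) are equivalent, the difference is \(HD\). Then \(H_{12}=H_{22}=0\) and \(B_0-B_0'\in\varpi\Her_{n-s}\), so the listed representatives are distinct. Since \(\Her_{n-s}(\calO_{K_v}/\varpi)\) is a vector space of dimension \((n-s)^2\) over the residue field of \(F_v\), the count is \(q_v^{(n-s)^2}\).

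For general \(A\in\tau_s^{(n)}(\varpi)\), write \(A=k_1A_0k_2\) with \(A_0\) diagonal and \(k_1,k_2\in\GL_n(\calO_{K_v})\). The substitution \(B\mapsto k_1^{-1}B\adj{k_2}{}^{-1}\)-type transport (left multiplication by \(\diag(k_1,\adj{k_1}{}^{-1})\in\tilk_{n,v}\)) is a bijection compatible with \(\sim_D\). Hence the cardinality depends only on \(s\).

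\textbf{Step 3: part (2).} Here \(A=\diag(I_{n-s-t},\varpi I_s,\varpi^2I_t)\), \(\nu=\varpi^2\) and \(D=\diag(\varpi^2I,\varpi I,I)\), and the same procedure applies with \(3\times3\) blocks.
\begin{itemize}
\item Hermitianity forces \(B_{ba}=\varpi^{e_b-e_a}\adj{B_{ab}}\), where \(e=(0,1,2)\) are the exponents of \(A\), and forces \(\varpi^{-e_a}B_{aa}\) to be Hermitian.
\item Using suitable blocks of \(H\), one kills every block in the last block column and row, reduces \(B_{11}\) modulo \(\varpi^2\), reduces \(B_{12}\) modulo \(\varpi\), and reduces \(\varpi^{-1}B_{22}\) modulo \(\varpi\).
\end{itemize}

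It remains to impose the rank condition. Reducing \(g\) mod \(\varpi\), the only contributions to the rank come from the identity block of \(A\) (rank \(n-s-t\)), the last \(t\) columns of \(D\) (rank \(t\)), and the \(s\times s\) block \(\varpi^{-1}B_{22}\) placed against the zero parts of \(A\) and \(D\). Thus
\[
\operatorname{rank}(g \bmod \varpi)=(n-s-t)+t+\operatorname{rank}(\varpi^{-1}B_{22}\bmod\varpi),
\]
and this equals \(n-i\) exactly when \(\varpi^{-1}B_{22}\) has rank \(s-i\) mod \(\varpi\).

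Finally, conjugating by \(\diag(u,\adj{u}{}^{-1})\) with \(u\in\GL_s(\calO_{K_v})\) in that block (adjusting \(A\) inside its \(\GL_n(\calO_{K_v})\)-class) brings \(\varpi^{-1}B_{22}\) to the form \(\diag(B_3,0)\) with \(B_3\in\Her_{s-i}\) and \(\det B_3\notin(\varpi)\).

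\textbf{Main obstacle.} The delicate point is this last normalization. It requires diagonalizing a Hermitian form over the residue field compatibly with the action on \(B_2\) and \(B_1\), and then checking that the resulting representatives are genuinely inequivalent. I would handle it by tracking how that conjugation acts on \((B_1,B_2)\), which only reparametrizes them within the listed ranges.
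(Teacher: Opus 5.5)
Part~(1) of your proposal is sound (the paper itself gives no argument beyond invoking Lemma~\ref{lem:coset_representatives}). Part~(2), however, has a genuine gap, at exactly the step you flag as the main obstacle.

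Conjugating by \(\kappa=\diag(U,\adj{U}^{-1})\), \(U=\diag(I_{n-s-t},u,I_t)\), does fix \(A\) and \(D\) and sends \(B\mapsto UB\adj{U}\). But \(\tilk_{n,v}\,\kappa g\kappa^{-1}=\tilk_{n,v}\,g\kappa^{-1}\) is in general a \emph{different} left coset from \(\tilk_{n,v}g\). With \(A\) fixed one has \(g'g^{-1}=\begin{pmatrix}I_n&(B'-B)D^{-1}\\0&I_n\end{pmatrix}\). Hence \(\tilk_{n,v}g=\tilk_{n,v}g'\) if and only if \(B'-B\in\Her_n(\calO_{K_v})D\), which is exactly \(B\sim_D B'\). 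So \(B\mapsto UB\adj{U}\) only permutes the classes of \(\frakB(A,T_i(\varpi^2))/\!\sim_D\). It cannot be used to discard any of them, and tracking its effect on \((B_1,B_2)\) does not change that.

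The step cannot be repaired, because your own Steps~1--3 prove something different from the displayed set. The classes are parametrized by \(B_{11}\bmod\varpi^2\), \(B_{12}\bmod\varpi\), and an arbitrary \(B_{22}\in\Her_s(\calO_{K_v})/\varpi\Her_s(\calO_{K_v})\) whose reduction has rank \(s-i\). For \(1\le i<s\) this is strictly more than the matrices with \(B_{22}=\diag(B_3,0)\).

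Concretely, take \(n=s=2\), \(t=0\), \(i=1\) and \(g=\begin{pmatrix}\varpi I_2&\diag(0,1)\\0&\varpi I_2\end{pmatrix}\).
\begin{itemize}
\item \(g\) has similitude \(\varpi^2\), and \(g\bmod\varpi\) has rank \(1=n-i\) (elementary divisors \(1,\varpi,\varpi,\varpi^2\)), so \(g\in T_1(\varpi^2)\) by your own criterion.
\item Yet \(\diag(0,1)-\diag(b,0)\notin\varpi\Her_2(\calO_{K_v})\) for every \(b\), so its class is not among the listed representatives.
\item In this case there are \((q_v^2+1)(q_v-1)\) rank-one classes, whereas the displayed set has only \(q_v-1\).
\end{itemize}
So (2) as written holds only when \(i=0\) or \(i=s\). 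The correct version lets the middle \(s\times s\) block run over all Hermitian residues of rank \(s-i\).

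There are two smaller slips. First, since \(\varpi\in F\), the condition ``\(\varpi^{-1}B_{22}\) Hermitian'' just says \(B_{22}\) is Hermitian; nothing forces \(B_{22}\in\varpi\Her_s(\calO_{K_v})\), as you assert in part~(1). The relation \(\sim_D\) reduces \(B_{22}\) itself modulo \(\varpi\), and it is \(B_{22}\bmod\varpi\), not \(\varpi^{-1}B_{22}\), that appears in \(g\bmod\varpi\). Under your divisibility claim the rank would always be \(n-s\). This is harmless in part~(1), where \(H_{22}=-B_{22}\) works regardless.

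Second, moving from \(A_0\) to \(k_1A_0k_2\) needs left multiplication by \(\diag(k_1,\adj{k_1}^{-1})\) \emph{and} right multiplication by \(\diag(k_2,\adj{k_2}^{-1})\). The resulting transport is \(B\mapsto k_1B\adj{k_2}^{-1}\).
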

By combining the above lemmas, we obtain the following double coset decomposition:
\[
  T(\varpi)
  =
  \bigsqcup_{\substack{0 \le s \le n \\[2pt]
      A \in \GL_n(\calO_{K_v}) \backslash \tau^{(n)}_{s}(\varpi) \\[2pt]
      B \in \frakB(A,T(\varpi))/\!\sim_D}}
  \tilk_{n,v}
  \begin{pmatrix}
    A & B \\ 0 & \varpi (A^*)^{-1}
  \end{pmatrix}.
\]
Hence, for any \(f \in \widetilde{\calA}_n(\rho_{n,(\bsk,\bsl)})\) we have
\[
  (T \cdot f)(g)
  = q_v^{r_n}\,
  \sum_{s,A,B}
  f\!\left(
  g
  \begin{pmatrix}
    A & B \\ 0 & \varpi (A^*)^{-1}
  \end{pmatrix}^{-1}
  \right).
\]

For each \(\nu_i\in N_0\), choose a representative \(\nu_{l_i}\in N_0\) and an element \(\pi_i \in F^\times\) such that
\[
  \nu_i^{-1}\varpi
  \in
  \pi_i \nu_{l_i}^{-1}
  \left(\prod_{v \in \bfh} \calO_v^\times\right)
  \bbA_\infty^\times.
\]

For each
\(A \in \GL_n(\calO_{K_v}) \backslash \tau^{(n)}_{s}(\varpi)\),
there exists \(\alpha_A \in \GL_n(K)\) such that
\[
  \varpi\, \bft_0(z_j)\, A^{-1}
  \in
  \alpha_A\,
  \bft_0\!\bigl(z_j(\nu_i\nu_{l_i}^{-1})^{\,n-s}\bigr)\,
  \Bigl(\prod_{v \in \bfh} \GL_n(\calO_{K_v})\Bigr)\,
  \GL_n(\bbA_{K,\infty})
\]
and \(\det\alpha_A=\pi_i^{n-s}\),
where we set
\[
  \bft_0(z)
  =
  \begin{pmatrix}
    I_{n-1} & 0 \\[2pt]
    0       & z
  \end{pmatrix}
  \quad (z \in K),
\]
so that \(\bft(z) = \bfm(\bft_0(z))\).

Under this choice, we obtain
\begin{align*}
  \gamma_{ij}
  \begin{pmatrix}
    A & B \\ 0 & \varpi {A^*}^{-1}
  \end{pmatrix}^{-1}
   & =
  \gamma_{ij}\bfn(-A^{-1}B)\bfs(\varpi^{-1})\bfm(\varpi A^{-1}) \\[3pt]
   & =
  \bfn\!\left(-\nu_i(A^{-1}B)[\bft_0(\overline{z_j})]\right)
  \bfs(\nu_i\varpi^{-1})
  \bft(z_j)\bfm(\varpi A^{-1})                                  \\[3pt]
   & \in
  \bfn\!\left(-\nu_i(A^{-1}B)[\bft_0(\overline{z_j})]\right)
  \bfs(\pi_{i,0}^{-1})
  \bfm(\alpha_{A,0})
  \gamma_{l_ij}
  \bft\!\bigl((\nu_i\nu_{l_i}^{-1})^{n-s}\bigr)
  \tilk_{n,0},
\end{align*}
where \(\pi_{i,0}\) and \(\alpha_{A,0}\) denote the finite components of \(\pi_i\) and \(\alpha_A\), respectively.

Assume that
\[
  \bft\!\bigl((\nu_i\nu_{l_i}^{-1})^{n-s}\bigr)
  \in
  x_{is}\,\tilk_{n,0}\,h_{is},
\]
with \(x_{is} \in \tilg_n^+\) and \(h_{is} \in \tilg_{n,\infty}^+\).
Then, if we put \(q_{\pi_i}=N_{F/\bbQ}(\pi_i)\), we have
\begin{align*}
  A_{T(\varpi)\cdot f}(\gamma_{ij},S)
   & =q_{\pi_i}^{r_n}
  \sum_{s=0}^n
  \sum_{A \in \GL_n(\calO_{K_v}) \backslash \tau^{(n)}_{s}(\varpi)}
  \left(
  \sum_B
  \bfe_\bbA\!\left(
    -\tr\!\left(
      S\nu_i(A^{-1}B)[\bft_0(\overline{z_j})]
      \right)
    \right)
  \right)                  \\
   & \quad \cdot
  \omega(\pi_{i,\infty}^{-1})
  \rho(\pi_{i,\infty}^{1/2}\alpha^{*-1}_{A,\infty},\pi_{i,\infty}^{1/2}\,^t\!\alpha^{-1}_{A,\infty})
  a\!\left(
  f_{l_ij}|_{\rho,\omega}h_{is},
  \pi_i^{-1}S[\alpha_A]
  \right)                  \\[4pt]
   & =q_{\pi_i}^{2r_n+n^2}
  \sum_{s=0}^n
  \sum_{A \in \GL_n(\calO_{K_v}) \backslash \tau^{(n)}_{s}(\varpi)}
  \left(
  \sum_B
  \bfe_\bbA\!\left(
  -\tr\!\left(
  \nu_i S[\bft_0(z_j)](A^{-1}B)
  \right)
  \right)
  \right)                  \\
   & \quad \cdot
  \omega(\pi_{i,\infty}^{-1})
  \rho(\alpha^{*-1}_{A,\infty},\,^t\!\alpha^{-1}_{A,\infty})
  a\!\left(
  f_{l_ij}|_{\rho,\omega}h_{is},
  \pi_i^{-1}S[\alpha_A]
  \right)
\end{align*}
by Proposition~\ref{prop:Fourier}.
Define
\[
  Q_s(S)
  =
  \left\{
  A \in \GL_n(\calO_{K_v}) \backslash \tau^{(n)}_{s}(\varpi)
  \;\middle|\;
  \sum_{B \in \frakB(A,T(\varpi))/\!\sim_D}
  \bfe_\bbA\!\big(-\tr(SA^{-1}B)\big)
  \neq 0
  \right\}.
\]
By Lemma~\ref{lem:counting}, we obtain the following formula:
\begin{align}\label{eq:hecke_inert_tp}
  A_{T(\varpi)\cdot f}(\gamma_{ij},S)
  =q_{\pi_i}^{2r_n+n^2}\sum_{s=0}^n
  q_v^{(n-s)^2}
  \sum_{A \in Q_s(\nu_iS[\bft_0(z_j)])}
   & \omega(\pi_{i,\infty}^{-1})\,
  \rho(\alpha^{*-1}_{A,\infty},\,^t\!\alpha^{-1}_{A,\infty})\,\nonumber \\
   & \quad\cdot a\!\left(
  f_{l_ij}|_{\rho,\omega}h_{is},
  \pi_i^{-1}S[\alpha_A]
  \right).
\end{align}

The operator \(T_i(\varpi^2)\) can be treated in a similar manner.
For each \(\nu_i\in N_0\), choose a representative \(\nu_{l'_i}\in N_0\) and an element \(\pi_i^{(2)} \in F^\times\) such that
\[
  \nu_i^{-1}\varpi^2
  \in
  \pi_i^{(2)} \nu_{l'_i}^{-1}
  \left(\prod_{v \in \bfh} \calO_v^\times\right)
  \bbA_\infty^\times.
\]
Note that the subscript \((2)\) is only notational and does not indicate squaring.
For each
\(A \in \GL_n(\calO_{K_v}) \backslash \tau^{(n)}_{s,t}(\varpi)\),
there exists \(\alpha_A \in \GL_n(K)\) such that
\[
  \varpi^2 \bft_0(z_j) A^{-1}
  \in
  \alpha_A\,
  \bft_0\!\bigl(z_j(\nu_i\nu_{l'_i}^{-1})^{2n-2t-s}\bigr)
  \Bigl(\prod_{v \in \bfh}\GL_n(\calO_{K_v})\Bigr)
  \GL_n(\bbA_{K,\infty})
\]
and \(\det\alpha_A=(\pi_i^{(2)})^{2n-2t-s}\).
Assume that
\[
  \bft\!\bigl((\nu_i\nu_{l'_i}^{-1})^{2n-2t-s}\bigr)
  \in
  y_{is}\,\tilk_{n,0}h'_{is},
\]
with \(y_{is} \in \tilg_n^+\) and \(h'_{is} \in \tilg_{n,\infty}^+\).
Then, if we put \(q_{\pi_i^{(2)}}=N_{F/\bbQ}(\pi_i^{(2)})\), we obtain
\begin{align*}
  A_{T_i(\varpi^2)\cdot f}(\gamma_{ij},S)
   & = q_{\pi_i^{(2)}}^{2r_n+n^2}
  \sum_{\substack{s+t \le n                                          \\ s \ge i}}
  \sum_{A \in \GL_n(\calO_{K_v})\backslash \tau^{(n)}_{s,t}(\varpi)} \\
   & \quad \cdot
  \left(
  \sum_{B \in \frakB(A,T_i(\varpi^2))/\!\sim_D}
  \bfe_\bbA\!\left(
  -\tr\!\big(
  \nu_i S[\bft_0(z_j)](A^{-1}B)
  \big)
  \right)
  \right)                                                            \\
   & \quad \cdot
  \omega((\pi^{(2)}_{i,\infty})^{-1})
  \rho(\alpha^{*-1}_{A,\infty},\,{}^t\!\alpha^{-1}_{A,\infty})
  a\!\left(
  f_{l'_ij}|_{\rho,\omega}h'_{is},
  (\pi_i^{(2)})^{-1}S[\alpha_A]
  \right).
\end{align*}

The summation over \(B\) is generally difficult to evaluate directly,
unlike in the case of \(T(\varpi)\).
Therefore, following \cite{hafner2002explicit} and \cite{caulk2007hecke},
we introduce a new operator \(\widetilde{T_i}(\varpi^2)\)
as a linear combination of several \(T_i(\varpi^2)\).
For this purpose, set
\[
  T_0(\varpi^2)
  :=\tilk_{n,v}\,\diag(\varpi^2 I_n,I_n)\,\tilk_{n,v}.
\]
The operator \(T_0(\varpi^2)\) is introduced only for this completion.

\begin{definition}
  For \(0 \le i \le n\), we define the Hecke operator
  \[
    \widetilde{T_i}(\varpi^2)
    =
    \sum_{j=0}^i
    \begin{bmatrix}
      n-j \\ i-j
    \end{bmatrix}_{q_v^2}
    T_{j}(\varpi^2),
  \]
  where
  \[
    \begin{bmatrix}
      s \\ t
    \end{bmatrix}_{q}
    =
    \prod_{k=1}^t
    \frac{q^{s-k+1}-1}{q^k-1}
  \]
  denotes the \(q\)-binomial coefficient.
\end{definition}

By the same argument as in Proposition~5.1 of~\cite{caulk2007hecke},
we then obtain
\begin{align*}
  A_{\widetilde{T_i}(\varpi^2)\cdot f}(\gamma_{ij},S)
  = & q_{\pi_i^{(2)}}^{2r_n+n^2}
  \sum_{\substack{s+t \le n      \\ s \ge i}}
  \sum_{A \in \GL_n(\calO_{K_v}) \backslash \tau^{(n)}_{s,t}(\varpi)}
  \left(
  \sum_B
  \bfe_\bbA\!\left(
  -\tr\!\big(
  \nu_i S[\bft_0(z_j)](A^{-1}B)
  \big)
  \right)
  \right)                        \\
    & \cdot
  \omega((\pi^{(2)}_{i,\infty})^{-1})
  \rho(\alpha^{*-1}_{A,\infty},\,^t\!\alpha_{A,\infty}^{-1})
  a\!\left(
  f_{l'_ij}|_{\rho,\omega}h'_{is},
  (\pi_i^{(2)})^{-1}S[\alpha_A]
  \right),
\end{align*}
where \(B\) runs through a complete set of representatives of
\[
  \frakB(A,s,t)
  :=
  \left\{
  AH
  \;\middle|\;
  H =
  \begin{pmatrix}
    H_1             & H_2 & 0 \\[6pt]
    \varpi\adj{H_2} & H_3 & 0 \\[6pt]
    0               & 0   & 0
  \end{pmatrix},
  \quad
  \begin{array}{l}
    H_{1} \in \Her_{n-s-t}(\calO_{K_v})/
    \varpi^2\Her_{n-s-t}(\calO_{K_v}), \\[4pt]
    H_{2} \in \M_{n-s-t,s}(\calO_{K_v})/
    \varpi\M_{n-s-t,s}(\calO_{K_v}),   \\[4pt]
    H_{3} \in \Her_s(\calO_{K_v})/
    \varpi\Her_s(\calO_{K_v})
  \end{array}
  \right\}.
\]
If we define
\[
  Q_{st}(S)
  =
  \left\{
  A \in \GL_n(\calO_{K_v}) \backslash \tau^{(n)}_{s,t}(\varpi)
  \;\middle|\;
  \sum_{B \in \frakB(A,s,t)}
  \bfe_\bbA\!\big(-\tr(SA^{-1}B)\big)
  \neq 0
  \right\},
\]
then it follows that
\begin{align}\label{eq:hecke_inert_tpi}
  A_{\widetilde{T_i}(\varpi^2)\cdot f}(\gamma_{ij},S)
   & = q_{\pi_i^{(2)}}^{2r_n+n^2}
  \sum_{\substack{s+t \le n       \\ s \ge i}}
  q_v^{2(n-s-t)(n-t)+s^2}
  \sum_{A \in Q_{st}(\nu_i S[\bft_0(z_j)])}
  \omega((\pi^{(2)}_{i,\infty})^{-1})
  \nonumber                       \\[3pt]
   & \quad \cdot
  \rho(\alpha^{*-1}_{A,\infty},\,{}^t\!\alpha_{A,\infty}^{-1})
  a\!\left(
  f_{l'_ij}|_{\rho,\omega}h'_{is},
  (\pi_i^{(2)})^{-1}S[\alpha_A]
  \right),
\end{align}
since \(\frakB(A,s,t)\) contains
\(q_v^{2(n-s-t)(n-t)+s^2}\) elements.
Notice that the relations expressing \(\widetilde{T_i}(\varpi^2)\) in terms of \(T_0(\varpi^2),\ldots,T_i(\varpi^2)\) are unitriangular over \(\bbZ\).

\subsection{The case where \texorpdfstring{\(v\)}{v} is ramified}
Next, we consider the case where \(v\) (and hence \(\frakp\)) is ramified in \(K\).
This case can be treated in a way completely parallel to the inert case,
but we include full details for completeness.

Define
\[
  \tau^{(n)}_{s,t}(\varpi)
  = \GL_n(\calO_{K_v})
  \,\diag(I_{n-s-t},\, \varpi I_s,\, \varpi\overline{\varpi} I_t)\,
  \GL_n(\calO_{K_v}).
\]

Choose a representative \(\nu_{l_i}\in N_0\) and an element \(\pi_i \in F^\times\) such that
\[
  \nu_i^{-1}\varpi\overline{\varpi} \in
  \pi_i \nu_{l_i}^{-1}
  \left(\prod_{v \in \bfh} \calO_v^\times\right)
  \bbA_\infty^\times.
\]
For each \(A \in \GL_n(\calO_{K_v}) \backslash \tau^{(n)}_{s,t}(\varpi)\) and representative \(z_j\in Z_0\),
there exist a representative \(z_{m_{sj}}\in Z_0\), \(\nu_{stj}\in N_0\), and \(\alpha_A \in \GL_n(K)\) such that
\[
  \varpi\overline{\varpi}\,\bft_0(z_j)\,A^{-1}
  \in
  \alpha_A\,
  \bft_0\bigl(z_{m_{sj}}\nu_{stj}\bigr)\,
  \Bigl(\prod_{v \in \bfh}\GL_n(\calO_{K_v})\Bigr)\,
  \GL_n(\bbA_{K,\infty}).
\]
Assume that
\[
  \bft\!\bigl(\nu_{stj}\bigr)
  \in y_{stj}\,\tilk_{n,0}h'_{stj},
\]
with \(y_{stj} \in \tilg_n^+\) and \(h'_{stj} \in \tilg_{n,\infty}^+\).

\begin{definition}
  For \(0 \le i \le n\), define the Hecke operator
  \[
    \widetilde{T_i}(\varpi)
    =
    \sum_{j=0}^i
    \begin{bmatrix}
      n-j \\ i-j
    \end{bmatrix}_{q_v}
    T_j(\varpi).
  \]
\end{definition}

Now define the auxiliary set
\[
  \frakB(A,s,t)
  :=
  \left\{
  AH \;\middle|\;
  H=
  \begin{pmatrix}
    H_1                        & H_2 & 0 \\[6pt]
    \overline{\varpi}\adj{H_2} & H_3 & 0 \\[6pt]
    0                          & 0   & 0
  \end{pmatrix},
  \quad
  \begin{array}{l}
    H_1\in \Her_{n-s-t}(\calO_{K_v})/
    \varpi\overline{\varpi}\Her_{n-s-t}(\calO_{K_v}), \\[4pt]
    H_2\in \M_{n-s-t,s}(\calO_{K_v})/
    \varpi\M_{n-s-t,s}(\calO_{K_v}),                  \\[4pt]
    H_3\in \{X\in\M_s(\calO_{K_v})\mid                \\[-1pt]
    \qquad \adj X=(\overline{\varpi}/\varpi)X\}/
    \varpi\Her_s(\calO_{K_v})
  \end{array}
  \right\}.
\]
Since \(\varpi+\overline{\varpi}\in\varpi^2\calO_{K_v}\), we have
\(\overline{\varpi}/\varpi\equiv-1\pmod{\varpi}\).  Reduction modulo
\(\varpi\) therefore identifies the quotient occurring in the definition of
\(H_3\) with the space of alternating matrices in \(\M_s(k_v)\): its diagonal
entries lie in \(\varpi\calO_v\), and the kernel of reduction is
\(\varpi\Her_s(\calO_{K_v})\).  Hence
\[
  \#\frakB(A,s,t)
  =q_v^{(n-s-t)(n-t)+s(s-1)/2}.
\]

Define
\[
  Q_{st}(S)
  =
  \left\{
  A \in \GL_n(\calO_{K_v}) \backslash \tau^{(n)}_{s,t}(\varpi)
  \;\middle|\;
  \sum_{B\in\frakB(A,s,t)}
  \bfe_\bbA\!\bigl(-\tr(SA^{-1}B)\bigr)\ne0
  \right\}.
\]
Then, if we put \(q_{\pi_i}=N_{F/\bbQ}(\pi_i)\), we have
\begin{align}\label{eq:hecke_ramified_tpi}
  A_{\widetilde{T_i}(\varpi)\cdot f}(\gamma_{ij},S)
   & = q_{\pi_i}^{2r_n+n^2}
  \sum_{\substack{s+t\le n  \\ s\ge i}}
  q_v^{(n-s-t)(n-t)+s(s-1)/2}
  \sum_{A\in Q_{st}(\nu_iS[\bft_0(z_j)])}
  \omega(\pi_{i,\infty}^{-1})
  \nonumber                 \\
   & \qquad \cdot
  \rho(\alpha^{*-1}_{A,\infty},\,^t\!\alpha_{A,\infty}^{-1})
  a\!\left(
  f_{l_i m_{sj}}|_{\rho,\omega}h'_{stj},
  \pi_i^{-1}S[\alpha_A]
  \right).
\end{align}

\subsection{The case where \texorpdfstring{\(v\)}{v} is split}
By the same argument as in Lemma~\ref{lem:decomposition_A}, we have the following lemma.

\begin{lemma}\label{lem:decomposition_split}
  A complete set of representatives of
  \(\GL_{2n}(\calO_v)\backslash T_i(\varpi)\)
  is given by
  \[
    \frakS_i(\varpi)
    =
    \left\{
    \begin{pmatrix}
      \varpi^{k_1} &        & a_{uv}          \\
                   & \ddots &                 \\
                   &        & \varpi^{k_{2n}}
    \end{pmatrix}
    \;\middle|\;
    \begin{array}{l}
      k_1+\cdots+k_{2n}=i,\quad k_j\in\{0,1\},    \\[4pt]
      a_{uv}\in\calO^{(k_v)}_v,\; 1\le u<v\le 2n, \\[4pt]
      a_{uv}=0\ \text{if}\ k_u=k_v=1
    \end{array}
    \right\},
  \]
  where \(\calO^{(0)}_{v}=\{0\}\) and
  \(\calO^{(1)}_{v}\subset\calO_v\) is a fixed
  set of representatives of \(\calO_v/\varpi\calO_v\).
\end{lemma}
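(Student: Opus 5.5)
We prove Lemma~\ref{lem:decomposition_split} by the standard Hermite normal form argument over the discrete valuation ring \(\calO_v\), exactly as for Lemma~\ref{lem:decomposition_A} (cf.\ Freitag~\cite[IV, 2.7]{freitag1983siegelsche}). By Lemma~\ref{lem:hecke}(3), \(T_i(\varpi)=\GL_{2n}(\calO_v)\,\diag(\varpi I_i,I_{2n-i})\,\GL_{2n}(\calO_v)\). By the elementary divisor theorem, this double coset is the set of \(g\in\M_{2n}(\calO_v)\) whose elementary divisors are \(1\) (with multiplicity \(2n-i\)) and \(\varpi\) (with multiplicity \(i\)). Equivalently, \(g\in\M_{2n}(\calO_v)\), \(\varpi g^{-1}\in\M_{2n}(\calO_v)\), and \(v(\det g)=i\).

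\emph{Upper triangular representatives.} Given \(g\in T_i(\varpi)\), left multiplication by \(\GL_{2n}(\calO_v)\) performs integral row operations. We process the columns from left to right: in column \(1\), move an entry of minimal valuation to the top and clear the entries below it; then recurse on the lower-right block. This brings \(g\) into upper triangular form with diagonal entries \(\varpi^{k_j}\), after rescaling each row by a unit. Since \(v(\det g)=i\), we get \(\sum k_j=i\). Since \(\varpi g^{-1}\) is integral, each \(k_j\le 1\), so \(k_j\in\{0,1\}\).

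\emph{Reducing the off-diagonal entries.} Next we reduce the entries above the diagonal, row by row from the bottom up. Adding an \(\calO_v\)-multiple of row \(v\) to row \(u<v\) changes \(a_{uv}\) by a multiple of \(\varpi^{k_v}\), so \(a_{uv}\) can be brought into \(\calO_v^{(k_v)}\). When \(k_v=0\) this forces \(a_{uv}=0\), matching the convention \(\calO^{(0)}_v=\{0\}\).

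\emph{The extra condition.} The condition \(a_{uv}=0\) when \(k_u=k_v=1\) is the step needing care. If \(k_u=1\), then row \(u\) has diagonal entry \(\varpi\). The requirement \(\varpi g^{-1}\in\M_{2n}(\calO_v)\), equivalently \(\varpi\M_{2n}(\calO_v)\subset\M_{2n}(\calO_v)g\), means that all entries of the row-space relations are controlled mod \(\varpi\). Concretely, reducing \(g\) mod \(\varpi\) gives a matrix of rank exactly \(2n-i\), namely the number of \(j\) with \(k_j=0\). The rows with \(k_u=1\), restricted to the columns \(v\) with \(k_v=1\), are then strictly upper triangular mod \(\varpi\) with entries in \(\calO_v^{(1)}\). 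Rank considerations force these entries to vanish: any nonzero such entry would give a nonzero entry in \(\varpi g^{-1}\) that is not integral, and one checks this by computing the corresponding entry of the inverse of the upper-triangular matrix. Hence \(a_{uv}=0\) when \(k_u=k_v=1\), and every left coset has a representative in \(\frakS_i(\varpi)\).

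\emph{Uniqueness.} Suppose two elements of \(\frakS_i(\varpi)\) satisfy \(g'=\kappa g\) with \(\kappa\in\GL_{2n}(\calO_v)\). Then \(\kappa=g'g^{-1}\) is upper triangular, and its diagonal entries are units, so the exponents \(k_j\) agree and \(\kappa\) is unipotent up to diagonal units that must be \(1\). Comparing entries from the diagonal outward, the reduced residues \(a_{uv}\in\calO_v^{(k_v)}\) must coincide, so \(\kappa=1\). The main obstacle is the vanishing condition for \(k_u=k_v=1\); if the direct inverse computation is messy, we will instead use the lattice description \(\varpi L_0\subset gL_0\subset L_0\) with \(L_0=\calO_v^{2n}\). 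In that description cosets correspond to \(i\)-dimensional subspaces of \(L_0/\varpi L_0\), and the representatives in \(\frakS_i(\varpi)\) are exactly the Schubert-cell (reduced echelon) parametrization of the Grassmannian over \(\calO_v/\varpi\), which gives both existence and uniqueness at once.
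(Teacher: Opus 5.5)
Your proposal is correct and follows the paper's route. The paper gives no separate proof: it says only that the lemma follows as Lemma~\ref{lem:decomposition_A}, that is, by the Hermite-normal-form argument of Freitag~\cite[IV, 2.7]{freitag1983siegelsche}, and your reduction by row operations together with the criterion $\varpi g^{-1}\in\M_{2n}(\calO_v)$ is exactly that argument.

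A few points are worth tightening:
\begin{itemize}
\item The vanishing of $a_{uv}$ for $k_u=k_v=1$ is cleanest if you say the following. Modulo $\varpi$, the rows with $k_u=0$ restrict to the identity on the columns with $k=0$, while the rows with $k_u=1$ vanish on those columns. So the reduction of $g$ mod $\varpi$ is block triangular, and its rank $2n-i$ forces the $k_u=1$ rows to vanish mod $\varpi$.
\item You should also check the converse inclusion $\frakS_i(\varpi)\subset T_i(\varpi)$, i.e.\ that $\varpi g^{-1}$ is integral for every $g\in\frakS_i(\varpi)$.
\item In your lattice fallback, the invariant of the left coset $\GL_{2n}(\calO_v)g$ is the row lattice $L_0g$, not $gL_0$. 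The quotient $L_0g/\varpi L_0$ then has dimension $2n-i$, not $i$.
\end{itemize}
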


For each \(\nu_{i}\in N_0\), choose a representative \(\nu_{l_i}\in N_0\) and
an element \(\pi_i\in F^\times\) such that
\[
  \nu_i^{-1}\varpi
  \in
  \pi_i\,\nu_{l_i}^{-1}
  \Bigl(\prod_{v\in\bfh}\calO_v^\times\Bigr)
  \bbA_\infty^\times.
\]

Put
\[
  \frakA_s(\varpi)
  := \left\{
  A =
  \begin{pmatrix}
    \varpi^{k_1} &        & a_{uv}       \\
                 & \ddots &              \\
                 &        & \varpi^{k_n}
  \end{pmatrix}
  \;\middle|\;
  \begin{array}{l}
    k_1+\cdots+k_n=s,\quad k_j\in\{0,1\},         \\[4pt]
    a_{uv}\in\calO_v^{(k_v)},\quad 1\le u<v\le n, \\[4pt]
    a_{uv}=0\ \text{if }k_u=k_v=1
  \end{array}
  \right\}.
\]
We view \(\frakA_s(\varpi)\) as the set of upper-left \(n\times n\) blocks
of matrices in \(\mathfrak{S}_i(\varpi)\).

Then for each \(A\in\frakA_s(\varpi)\),
\(D\in\frakA_{i-s}(\varpi)\) and representative \(z_j\in Z_0\),
there exist
\(z_{m_{ij}}\in Z_0\) and \(\alpha_{A,D}\in\GL_n(K)\) such that
\[
  \bft_0(z_j)\,(\varpi A^{-1},\,{}^tD)
  \in
  \alpha_{A,D}\,
  \bft_0(z_{m_{ij}}(\nu_i\nu_{l_i}^{-1})^{i-s})\,
  \Bigl(\prod_{v\in\mathbf h}\GL_n(\calO_{K_v})\Bigr)\,
  \GL_n(\bbA_{K,\infty}).
\]
Here \((\varpi A^{-1},\,{}^tD)\) denotes the element of \(\GL_n(\bbA_K)\)
whose \(\mathfrak P\)-component is \(\varpi A^{-1}\),
whose \(\overline{\mathfrak P}\)-component is \({}^tD\),
and whose other components are the identity matrix.

Assume that
\[
  \bft\!\bigl((\nu_i\nu_{l_i}^{-1})^{i-s}\bigr)
  \in y_{is}\,\tilk_{n,0}h'_{is},
\]
with \(y_{is} \in \tilg_n^+\) and \(h'_{is} \in \tilg_{n,\infty}^+\).

Define
\[
  \frakB(A,D)=
  \left\{B
  \;\middle|\;
  \begin{pmatrix}
    A & B \\ 0 & D
  \end{pmatrix}
  \in
  \frakS_i(\varpi)
  \right\}
\]
for each \(A\in\frakA_s(\varpi)\) and
\(D\in\frakA_{i-s}(\varpi)\)
and put
\[
  Q_{s}(S)
  =
  \left\{
  (A, D)\in\frakA_{s}(\varpi)\times\frakA_{i-s}(\varpi)
  \;\middle|\;
  \sum_{B \in \frakB(A,D)}
  \bfe_\bbA\!\bigl(-\tr(SA^{-1}B)\bigr)
  \neq 0
  \right\}.
\]
Then, writing \(q_{\pi_i}=N_{F/\bbQ}(\pi_i)\), we obtain
\begin{align}\label{eq:hecke_split_tpi}
  A_{T_i(\varpi)\cdot f}(\gamma_{ij},S)
   & =q_{\pi_i}^{2r_n+n^2}
  \sum_{s=0}^iq_v^{(n-s)(i-s)}
  \sum_{(A,D)\in Q_s(\nu_iS[\bft_0(z_j)])}
  \omega(\pi_{i,\infty}^{-1})
  \nonumber                \\
   & \quad \cdot
  \rho(\alpha_{A,D,\infty}^{*-1},
  \,{}^t\!\alpha_{A,D,\infty}^{-1})
  a\!\left(
  f_{l_i,m_{ij}}|_{\rho,\omega}h'_{is},
  \pi_i^{-1}S[\alpha_{A,D}]
  \right).
\end{align}

\section{Pullback formula}\label{sec:pullback}
\subsection{Differential operators on Hermitian automorphic forms}\label{subsec:differential}

Let \(n_1,n_2\) be positive integers such that \(n_1\geq n_2\geq 1\) and put \(n = n_1+n_2\).
We diagonally embed \(\hus{n_1}^\bfa \times \hus{n_2}^\bfa \hookrightarrow \hus{n}^\bfa\) and define the diagonal embedding \(\iota_{n_1,n_2}:G_{n_1}\times G_{n_2} \hookrightarrow G_{n}\).

Let \((\rho_s\boxtimes\tau_s,V_s)\) be a representation of \(\frakK_{n_s,\infty}^\bbC\) for \(s=1,2\).
We fix tuples \(\kappa=(\kappa_v)_{\va}, \, \nu=(\nu_v)_{\va}\) of positive integers.

We will consider \(V := V_{1}\otimes V_2\)-valued differential operators \(\bbD\)
on \(\calA_{n}^{\sharp}({\det}^\kappa)\), satisfying Condition (A) below:

\begin{cond}
  For any automorphic form \(f\in \calA_{n}^{\sharp}(\det\nolimits^\kappa\boxtimes\det^\nu)\), we have
  \[\bbD f(\iota_{n_1,n_2}(g_1,g_2))
    \in \calA_{n_1}^{\sharp}(\det\nolimits^\kappa\rho_{1}\boxtimes\det\nolimits^\nu\tau_1)
    \otimes\calA_{n_2}^{\sharp}(\det\nolimits^\kappa\rho_2\boxtimes\det\nolimits^\nu\tau_2)\]
\end{cond}

Condition (A) corresponds to Case (I) in Ibukiyama~\cite{ibukiyama1999differential} for Siegel modular forms
and aligns with the differential operators constructed for several vector-valued cases in Browning \cite{browning2024constructing}.
This type of operator is also treated for Hermitian automorphic forms on \(U(p,q)\) in the work of Eischen--Liu \cite{eischen2024archimedean} (see also \cite{eischen2018differential}, \cite{eischen2020functions}).

A representation-theoretic interpretation of \(\bbD\) in more general settings was provided in \cite{Takeda2025pullback},
while Case (II) of \cite{ibukiyama1999differential} for Hermitian modular forms was studied by Dunn \cite{Dunn2024Rankin}.
The differential operator \(\bbD\) for Hermitian modular forms was explicitly computed in certain cases in \cite{takeda2025differential},
following the methods developed by Ibukiyama \cite{Ibukiyama2014Higher, Ibukiyama2022Differential}.
In what follows, we recall the relevant results from \cite{Takeda2025pullback} and \cite{takeda2025differential}.
In those references, only the case \(\nu=0\) is treated; however, the results extend easily to the case where \(\nu\) is allowed to be non-zero.

We put \(\pi^+=\left(\pi^+_{v,ij}\right)\)
and \(\pi^+_{v,ij}\) acts on \(f\in \calA_{n}^{\sharp}({\det}^\kappa)\) by right derivation.
Let \(P_v(X)\) be a vector-valued polynomial on a space \(\M_n\) of degree \(n\) variable matrices.
We will give the equivalent condition that the differential operator
\(\bbD=P(\pi^+)=\prod_{\va}P_v((\pi^+_{v,ij}))\) satisfies Condition (A).
Let \(L_{n,\kappa}=\bbC[X,Y]\) be the space of polynomials
in the entries of \((n,\kappa)\)-matrices \(X=(x_{ij})\) and \(Y=(y_{ij})\) over \(\bbC\).

\begin{definition}
  If a polynomial \(f(X, Y) \in L_{n,\kappa}\) satisfies
  \[\sum_{s=1}^\kappa \frac{\partial^2f}{\partial X_{is}\partial Y_{j,s}}=0 \text{  for any  } i,j\in\{1,\ldots,n\}, \]
  we say that \(f(X,Y)\) is a pluriharmonic polynomial.
\end{definition}

\begin{proposition}[Corollary~3.21 and Proposition~3.22 of \cite{Takeda2025pullback}]
  \label{prop:diff}
  Let \(n_1,n_2\) be positive integers with \(n_1 \ge n_2 \ge 1\), and set
  \(n = n_1 + n_2\).
  Let
  \((\bsk_s,\bsl_s) = (\bsk_{s,v},\bsl_{s,v})_v\)
  be a family of pairs of dominant integral weights satisfying
  \(\ell(\bsk_{s,v}) \le n_s, \ell(\bsl_{s,v}) \le n_s,
  \ell(\bsk_{s,v}) + \ell(\bsl_{s,v}) \le \kappa_v+\nu_v\)
  for \(s=1,2\) and all \(v\).
  Let \(P_v(T)\) be a
  \(\bigl(
  V_{n_1,\bsk_{1,v},\bsl_{1,v}}
  \otimes
  V_{n_2,\bsk_{2,v},\bsl_{2,v}}
  \bigr)\)-valued polynomial
  on the space \(\M_n\) of \(n \times n\) matrices, for each \(v\).
  Define the differential operator
  \[
    \bbD = P(\pi^+)
    = \prod_v P_v\bigl((\pi^+_{v,ij})\bigr).
  \]
  \begin{enumerate}
    \item The differential operator \(\bbD\) satisfies Condition~{\rm(A)} for
          \({\det}^\kappa\boxtimes{\det}^\nu\) and
          \(({\det}^{\kappa}\rho_{n_1,\bsk_1}\boxtimes{\det}^\nu\rho_{n_1,\bsl_1})\boxtimes ({\det}^{\kappa}\rho_{n_2,\bsk_2}\boxtimes{\det}^\nu\rho_{n_2,\bsl_2})\)
          if and only if, for each \(v\), the polynomial \(P_v(T)\) satisfies the
          following conditions.
          \begin{enumerate}
            \item
                  Define
                  \[
                    \widetilde{P_v}(X_1,X_2,Y_1,Y_2)
                    :=
                    P_v\!\left(
                    \begin{pmatrix}
                        X_1\,{}^tY_1 & X_1\,{}^tY_2 \\
                        X_2\,{}^tY_1 & X_2\,{}^tY_2
                      \end{pmatrix}
                    \right),
                  \]
                  where \(X_i,Y_i \in \M_{n_i,\kappa_v+\nu_v}\).
                  Then \(\widetilde{P_v}\) is pluriharmonic in each pair of variables
                  \((X_i,Y_i)\).

            \item
                  For \((A_i,B_i) \in
                  \frakK_{n_i,v}^{\bbC}
                  := \GL_{n_i}(\bbC) \times \GL_{n_i}(\bbC)\), we have
                  \[
                    P_v\!\left(
                    \left(\begin{smallmatrix} A_1 & 0 \\ 0 & A_2
                      \end{smallmatrix}\right)
                    T
                    \left(\begin{smallmatrix} {}^tB_1 & 0 \\ 0 & {}^tB_2
                      \end{smallmatrix}\right)
                    \right)
                    =
                    \bigl(
                    \rho_{n_1,(\bsk_{1,v},\bsl_{1,v})}(A_1,B_1)
                    \otimes
                    \rho_{n_2,(\bsk_{2,v},\bsl_{2,v})}(A_2,B_2)
                    \bigr)
                    P_v(T).
                  \]
          \end{enumerate}

    \item
          There exists a differential operator \(\bbD\) satisfying
          Condition~{\rm(A)} for \({\det}^\kappa\boxtimes{\det}^\nu\) and
          \(({\det}^{\kappa}\rho_{n_1,\bsk_1}\boxtimes{\det}^\nu\rho_{n_1,\bsl_1})\boxtimes ({\det}^{\kappa}\rho_{n_2,\bsk_2}\boxtimes{\det}^\nu\rho_{n_2,\bsl_2})\)
          if and only if
          \(\bsk_1 = \bsl_2\) and \(\bsl_1 = \bsk_2\).
          If such an operator exists, it is unique up to scalar multiplication.
  \end{enumerate}
\end{proposition}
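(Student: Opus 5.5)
Since Proposition~\ref{prop:diff} is cited from \cite{Takeda2025pullback} (and stated there for \(\nu=0\)), the plan is to reproduce that argument with the extra character \({\det}^\nu\) carried along. Everything is local at each archimedean place \(v\), so we fix \(v\) and work on \(\hus{n}\). The first step translates Condition~(A) into a condition on the polynomial \(P_v\). Put \(\phi(g)=f(g)\) for \(f\in\calA_n^\sharp({\det}^\kappa\boxtimes{\det}^\nu)\). The restriction of \(P(\pi^+)f\) to \(\iota_{n_1,n_2}(G_{n_1}\times G_{n_2})\) is automatic for the discrete subgroups, because \(\iota_{n_1,n_2}\) sends rational points and compact open subgroups into the corresponding ones. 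Hence Condition~(A) holds if and only if, for every holomorphic vector of weight \({\det}^\kappa\boxtimes{\det}^\nu\), the restricted function has the correct right \(\frakK_{n_1,\infty}\times\frakK_{n_2,\infty}\)-equivariance and is annihilated by \(\frakp^-_{n_1}\oplus\frakp^-_{n_2}\), that is, it is holomorphic. Equivariance under \(\frakK\) follows from \(\mathrm{Ad}(k)\pi^+=k_1\pi^+\,{}^tk_2\) together with the equivariance of the original form. This is exactly condition (b).

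For holomorphy we would use the Fock model, i.e.\ the Weil representation of the dual pair \((\rmu(n,n),\rmu(\kappa_v+\nu_v))\). The holomorphic discrete series of scalar weight \({\det}^\kappa\boxtimes{\det}^\nu\) is realized with lowest \(\frakK\)-type the constant function. In this model \(\pi^+_{ij}\) acts by multiplication by \(\sum_s x_{is}y_{js}\), which matches the substitution \(T=X\,{}^tY\) in \(\widetilde{P_v}\). Elements of \(\frakp^-\) act by the Laplace-type operators \(\sum_s\partial^2/\partial x_{is}\partial y_{js}\). Under the restriction to the subgroup, \(\frakp^-_{n_1}\) and \(\frakp^-_{n_2}\) correspond to the diagonal blocks of these operators in the variables \((X_1,Y_1)\) and \((X_2,Y_2)\). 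So the vector \(P_v(\pi^+)\cdot 1\) is killed by \(\frakp^-_{n_1}\oplus\frakp^-_{n_2}\) exactly when \(\widetilde{P_v}\) is pluriharmonic in each pair separately, which is condition (a). The reverse implication holds because the map \(P\mapsto\widetilde P\) is injective on polynomials of \(T\) when \(\kappa_v+\nu_v\ge n\). If \(\kappa_v+\nu_v<n\), one restricts to the relevant range using the length hypothesis. This gives part~(1).

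Part~(2) reduces to a branching statement. By Kashiwara--Vergne, pluriharmonic polynomials in \((X_i,Y_i)\) decompose under \(\GL_{n_i}\times\GL_{n_i}\times\rmu(\kappa_v+\nu_v)\) multiplicity-freely. In this decomposition the \(\GL_{n_i}^2\)-type \((\bsk,\bsl)\) is paired with a fixed \(\rmu(\kappa_v+\nu_v)\)-type \(\sigma_{\bsk,\bsl}\), and this type is nonzero precisely when \(\ell(\bsk)+\ell(\bsl)\le\kappa_v+\nu_v\). The polynomial \(\widetilde P_v\) is a function of the products \(X_i\,{}^tY_j\), so it is \(\rmu(\kappa_v+\nu_v)\)-invariant under the simultaneous action on \((X,Y)\) with its contragredient. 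An invariant in \(\sigma_{\bsk_1,\bsl_1}\otimes\sigma_{\bsk_2,\bsl_2}\) exists if and only if \(\sigma_{\bsk_2,\bsl_2}\cong\sigma_{\bsk_1,\bsl_1}^*=\sigma_{\bsl_1,\bsk_1}\), and in that case it is unique up to scalar. This gives \(\bsk_1=\bsl_2\), \(\bsl_1=\bsk_2\), and uniqueness. For existence, we take that invariant and descend it to a polynomial in \(T\) via the first fundamental theorem of invariant theory for \(\GL/\rmu(\kappa_v+\nu_v)\).

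The main obstacle is making the Fock-model identification precise when \(\nu\neq0\). The similitude and the central \(\bbC I_{2n}\) summand of \(\frakp^\bbC\) must act by scalars that do not affect the \(\frakp^-\) computation. We would first check that \({\det}^\nu\) only twists the \(\frakK\)-type by a character, so that the oscillator realization with total rank \(\kappa_v+\nu_v\), split into signatures \(\kappa_v\) and \(\nu_v\), still carries the lowest vector. The argument for \(\nu=0\) in \cite{Takeda2025pullback} then goes through verbatim.
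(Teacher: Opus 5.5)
Your core argument is sound for $\kappa_v+\nu_v\ge n$, and it is a reasonable reconstruction of the representation-theoretic proof in the cited source. The ingredients are the Fock model of the compact dual pair $(\rmu(n,n),\rmu(\kappa_v+\nu_v))$, with $\pi^+_{v,ij}$ acting by multiplication by the entries of $X\,{}^tY$ and $\frakp^-$ by the Laplacians $\sum_s\partial^2/\partial x_{is}\partial y_{js}$. The vector $1$ generates the whole generalized Verma module because $(X,Y)\mapsto X\,{}^tY$ is onto $\M_n$. Kashiwara--Vergne plus the first fundamental theorem then give part (2) as you describe.

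The paper does not reprove any of this. It quotes the case $\nu=0$ and extends it through the remark that $\bbD$ depends only on $\kappa+\nu$. That extension is where your route differs, and the paper's is a one-liner. For $g\in\rmu(n,n)$ one has $\det\mu(g,Z)=\det\lambda(g,Z)\,\det(g)^{-1}$. Hence weight ${\det}^{\kappa}\boxtimes{\det}^{\nu}$ is weight ${\det}^{\kappa+\nu}\boxtimes{\det}^{0}$ twisted by the character $g\mapsto\det(g)^{\nu}$; the same factor $\det(g)^{\nu_v}$ appears in the archimedean section of $E_{n,\kappa,\nu}$. Every $\pi^{\pm}_{v,ij}$ is traceless, so this character is killed by $\frakp^{\pm}$. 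Multiplication by it therefore commutes with $P(\pi^+)$ and with $\frakp^-_{n_1}\oplus\frakp^-_{n_2}$. It is also multiplicative along $\iota_{n_1,n_2}$, and it turns the target ${\det}^{\kappa}\rho_{n_s,\bsk_s}\boxtimes{\det}^{\nu}\rho_{n_s,\bsl_s}$ into ${\det}^{\kappa+\nu}\rho_{n_s,\bsk_s}\boxtimes\rho_{n_s,\bsl_s}$. So Condition~(A) for $(\kappa,\nu)$ is Condition~(A) for $(\kappa+\nu,0)$, and your ``main obstacle'' disappears. In particular nothing should be ``split into signatures $\kappa_v$ and $\nu_v$''. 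The partner group must stay the compact $\rmu(\kappa_v+\nu_v)$, since a partner $\rmu(\kappa_v,\nu_v)$ would not give holomorphic representations; $(\kappa_v,\nu_v)$ only decides which character you twist by.

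Two steps of your part (1) need tightening.

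First, for the ``only if'' direction you pass from Condition~(A) to the vanishing of $\iota(X)P(\pi^+)v_0$ in the abstract lowest weight module. That passage is not automatic, because (A) quantifies only over level-one forms. That space is finite-dimensional and can even be zero: for instance when $\zeta^{n(\kappa_v-\nu_v)}\neq1$ for a root of unity $\zeta\in K$, since $\zeta I_{2n}$ is central and integral. In that case (A) holds for every $P$. You must read (A), as in Ibukiyama's Case~(I), at the level of the $(\frakg,\frakK)$-module, or of all holomorphic functions with the slash action. There, left translation by $G_{n,\infty}$ lets you pass from vanishing on $\iota_{n_1,n_2}(G_{n_1}\times G_{n_2})$ to vanishing identically.

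Second, your sentence about $\kappa_v+\nu_v<n$ does not work. In that range $P\mapsto\widetilde{P_v}$ kills every polynomial vanishing on matrices of rank $\le\kappa_v+\nu_v$. Such $P(\pi^+)$ annihilate the corresponding singular lowest weight module, and no hypothesis on $\ell(\bsk)$, $\ell(\bsl)$ restores injectivity. There, condition (b) and the uniqueness can only hold modulo this kernel. This is harmless for the paper, which has $\mu>n$ throughout, but you should impose $\kappa_v+\nu_v\ge n$ rather than argue it away.
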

\begin{remark}
  In particular, the differential operators do not depend on the individual choices of \(\kappa\) and \(\nu\), but only on their sum \(\kappa+\nu\).
\end{remark}
\begin{remark}
  The explicit form of the polynomial \(P_v(T)\) is given in \cite{takeda2025differential} for certain specific cases.
\end{remark}

\subsection{Hermitian Eisenstein series}\label{subsec:Eisenstein}

We introduce the Hermitian Eisenstein series according to Shimura
\cite[\S 16.5]{shimura2000arithmeticity}.
We fix tuples \(\kappa=(\kappa_v)_{\va},\, \nu=(\nu_v)_{\va}\) of non-negative integers.

Consider the following subgroups of \(G_{n}\) for \(r\leq n\):
\begin{align*}
  L_{n,r} & =\left\{\left.
  \begin{pmatrix}
    A & 0   & 0            & 0   \\
    0 & I_r & 0            & 0   \\
    0 & 0   & \adj{A}^{-1} & 0   \\
    0 & 0   & 0            & I_r \\
  \end{pmatrix}
  \in G_{n}\right| A \in \GL_{n-r}(K)\right\}, \\
  U_{n,r} & =\left\{
  \begin{pmatrix}
    I_{n-r} & *   & *       & *   \\
    0       & I_r & *       & 0   \\
    0       & 0   & I_{n-r} & 0   \\
    0       & 0   & *       & I_r \\
  \end{pmatrix}
  \in G_{n}\right\},                           \\
  G_{n,r} & =\left\{
  \begin{pmatrix}
    I_{n-r} & 0 & 0       & 0 \\
    0       & * & 0       & * \\
    0       & 0 & I_{n-r} & 0 \\
    0       & * & 0       & * \\
  \end{pmatrix}
  \in G_{n}\right\}.
\end{align*}
Then the subgroups \(P_{n,r}=G_{n,r}L_{n,r}U_{n,r}\) are the standard parabolic subgroups of \(G_{n}\) and there are natural embeddings
\(a_n:F\hookrightarrow A^+_n\), \(t_{n,r}: \GL_{n-r}(K)\hookrightarrow L_{n,r}\) and \(s_{n,r}: G_{r}\hookrightarrow G_{n,r}\).
Define \(G_{n,r,v}, G_{n,r}(\bbA), L_{n,r,v}, L_{n,r}(\bbA)\), etc. in the same way as \(G_{n,v}, G_{n}(\bbA)\), etc.

By the Iwasawa decomposition, \(G_{n}(\bbA)\) (resp. \(G_{n,v}\)) can be decomposed as
\(G_{n}(\bbA)=P_{n,r,\bbA}\frakK_{n,\bbA}\) (resp. \(G_{n,v}=P_{n,r,v}\frakK_{n,v}\)).

We consider an unramified Hecke character \(\chi\) of \(K\)
whose infinite component \(\chi_\infty\) is given by
\begin{equation}\label{eq:infty_character}
  \chi_\infty(x)=\prod_{\va} |x_v|^{-\kappa_v+\nu_v}\, x_v^{\kappa_v-\nu_v}.
\end{equation}
For each place \(v\) of \(F\), we put
\[
  \chi_v=\prod_{w\mid v}\chi_w.
\]
Fix such a Hecke character \(\chi\).

We put
\[
  \delta(g)=\det\!\left(C \bfi_n + D\right)
\]
for
\(
g=
\begin{pmatrix}
  A & B \\
  C & D
\end{pmatrix}\in \rmu_{n,n}.
\)

\begin{definition}
  Let \(s\in\bbC\).
  For \(g=t_{n,0}(A)\mu k \in G_{n,v}\) with \(A\in \GL_n(K_v)\), \(\mu\in U_{n,0,v}\), and \(k\in \tilk_{n,v}\), define
  \[
    \varepsilon_{n,\kappa,\nu,v}(g,s;\chi)=
    \begin{cases}
      |\det(\adj{A}A)|_v^{\,s}\,\chi_v(\det A^*)
       & (v\in\bfh), \\[6pt]
      \det(g)^{\nu_v}|\delta(g)|^{\,\kappa_v+\nu_v-2s}\,\delta(g)^{-\kappa_v-\nu_v}
       & (\va).
    \end{cases}
  \]
  Set
  \[
    \varepsilon_{n,\kappa,\nu}(g,s;\chi)
    =\prod_v \varepsilon_{n,\kappa,\nu,v}(g_v,s;\chi).
  \]
  The Hermitian Eisenstein series on \(G_{n}(\bbA)\) is defined by
  \[
    E_{n,\kappa,\nu}(g,s;\chi)
    =\sum_{\gamma\in P_{n,0}\backslash G_n}
    \varepsilon_{n,\kappa,\nu}(\gamma g,s;\chi).
  \]
  For \(\theta\in \frakK_{n,0}\), we further define
  \[
    E_{n,\kappa,\nu}^{\theta}(g,s;\chi)
    =E_{n,\kappa,\nu}(g\theta^{-1},s;\chi).
  \]
\end{definition}

The Hermitian Eisenstein series \(E_{n,\kappa,\nu}(g,s;\chi)\) and \(E^\theta_{n,\kappa,\nu}(g,s;\chi)\) converge absolutely and locally uniformly for \(\Re(s)>n\)
(see, for example, Shimura~\cite{shimura1997Euler}).

\begin{proposition}[{\cite[Proposition~17.7]{shimura2000arithmeticity}}]
  Let \(\mu\) be a positive integer such that \(\mu\geq n\).
  If \(\kappa_v+\nu_v=\mu\) for every \(\va\), then \(E_{n,\kappa,\nu}(g,\mu/2;\chi)\) belongs to \(\calA^\sharp_n({\det}^\kappa\boxtimes{\det}^\nu)\),
  except when \(\mu=n+1\), \(F=\bbQ\), \(\chi=\chi_{K/F}^{n+1}\),
  where \(\chi_{K/F}\) is the quadratic character associated with the quadratic extension \(K/F\).
\end{proposition}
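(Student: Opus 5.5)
The statement is Shimura's Proposition~17.7: for \(\kappa_v+\nu_v=\mu\ge n\), the Eisenstein series \(E_{n,\kappa,\nu}(g,\mu/2;\chi)\) lies in \(\calA^\sharp_n({\det}^\kappa\boxtimes{\det}^\nu)\). Outside the listed exceptional case, this means it comes, via the map \(\sharp\) of \eqref{eq:fsharp}, from a tuple of holomorphic Hermitian modular forms.

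I would argue in three stages: equivariance, analytic continuation to \(s=\mu/2\), and holomorphy at that point.

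\textbf{Step 1: transformation law.} First I check that \(\varepsilon_{n,\kappa,\nu}(g,s;\chi)\) has the required behaviour. It is left \(P_{n,0}\)-invariant up to the character \(\chi\) on the Levi factor; on the right, it is invariant under \(\frakK_{n,0}\) at the finite places and transforms under \(\frakK_{n,\infty}\) by \({\det}^\kappa\boxtimes{\det}^\nu\). At infinity this uses \(\delta(gk)=\delta(g)\det(\lambda\text{-part of }k)\) and the factor \(\det(g)^{\nu_v}\). The condition \eqref{eq:infty_character} on \(\chi_\infty\) is exactly what makes the local definitions glue, i.e.\ what makes \(\varepsilon\) well defined on \(P\backslash G\). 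Summing over \(P_{n,0}\backslash G_n\) then gives a function that is left \(G_n\)-invariant with the correct right \(K\)-type, for \(\Re s>n\) where the series converges.

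\textbf{Step 2: the classical function and meromorphic continuation.} Next I pass to classical functions \(f_j(Z)\) on \(\hus{n}^\bfa\), one for each representative \(\gamma_{1j}\). Each \(f_j\) is a sum of the form
\[
\sum \chi(\cdot)\det(CZ+D)^{-\mu}\,\lvert\det(CZ+D)\rvert^{\mu-2s}\det(\Im Z)^{s-\mu/2}.
\]
At \(s=\mu/2\) each term is formally holomorphic in \(Z\). When \(\mu>2n\) the series converges absolutely at \(s=\mu/2\), and holomorphy follows immediately because a locally uniform limit of holomorphic functions is holomorphic. For \(n\le\mu\le 2n\) I would instead use the meromorphic continuation of the Eisenstein series in \(s\), via Shimura's analysis of the Fourier expansion, and evaluate at \(s=\mu/2\).

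\textbf{Step 3: holomorphy at the critical point (main obstacle).} The hardest part is showing that the continued function is holomorphic at \(s=\mu/2\) when \(n\le\mu\le 2n\). I would compute the Fourier coefficients explicitly. The \(T\)-th coefficient factors as a product of
\begin{itemize}
\item a finite Siegel series, which is a product of \(L(2s-i,\chi\chi_{K/F}^{i})\) up to a polynomial in \(q_v^{-s}\), with normalising \(L\)-factors in the denominator; and
\item a confluent hypergeometric function \(\xi(Y,T;s+\mu/2,s-\mu/2)\) at infinity.
\end{itemize}
Shimura's theory shows that at \(s=\mu/2\) the archimedean factor \(\xi\) vanishes unless \(T\ge 0\), and that it then reduces to a constant times \(e^{-2\pi\tr(TY)}\). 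What remains is to check that the product of the Gamma factors and the \(L\)-values is finite. The only possible pole comes from a factor of the form \(L(\mu-n,\chi\chi_{K/F}^{n})\)-type or \(\zeta\)-type, evaluated at \(s=1\). This occurs exactly when \(\mu=n+1\), \(F=\bbQ\) and \(\chi=\chi_{K/F}^{n+1}\), which is the excluded case.

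Finally, I check moderate growth and \(Z(\frakg)\)-finiteness. Both follow because the continued series is annihilated by the Casimir element up to a scalar and is holomorphic. The case \(n=1\), \(F=\bbQ\) additionally needs holomorphy at the cusps, which is visible from \(T\ge 0\) in the Fourier expansion.
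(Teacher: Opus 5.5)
The paper gives no proof of this statement and simply cites Shimura. Your Steps 1 and 2 are fine: for \(\mu>2n\), absolute convergence at \(s=\mu/2>n\) does finish the argument. The Fourier expansion with Shimura's \(\xi\) at \(\beta=s-\mu/2=0\) is also the standard tool for \(n\le\mu\le 2n\). However, Step 3 carries all the content, and as written it does not work.

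\textbf{Degenerate coefficients.} At level one, only the non-degenerate coefficients have the form (Siegel series)\(\,\times\,\xi(Y,T;s+\mu/2,s-\mu/2)\). For \(\det T=0\), lower Bruhat cells also contribute; for instance, the constant term contains the identity-cell term \(\det(Y)^{s-\mu/2}\). Moreover, for degenerate \(T\ge 0\) the top-cell factor \(\xi(Y,T;\mu,0)\) is \emph{not} a nonzero multiple of \(e^{-2\pi\tr(TY)}\). Already \(\xi(Y,0;\alpha,\beta)\) is a constant times \(\Gamma_n(\alpha+\beta-n)\Gamma_n(\alpha)^{-1}\Gamma_n(\beta)^{-1}\det(Y)^{n-\alpha-\beta}\). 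At \(s=\mu/2\) this has a zero of order \(\min(n,\mu-n)\) at each archimedean place.

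On the finite side, the degenerate Siegel series and intertwining constants are ratios of Hecke \(L\)-functions whose numerators can have a pole at \(s=\mu/2\). So the degenerate terms there are ``zero \(\times\) pole'' products multiplying non-holomorphic functions such as \(\det(Y)^{n-\mu}\). Holomorphy requires these products to \emph{vanish}. Your criterion, that ``the product of the Gamma factors and the \(L\)-values is finite'', is not enough. In the excluded case nothing is infinite: the series is finite at \(s=\mu/2\) but not holomorphic. For \(n=1\), \(\mu=2\), \(F=\bbQ\) it is the analogue of the nearly holomorphic \(E_2(z)-3/(\pi y)\).

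\textbf{The condition \(F=\bbQ\).} For the same reason, your explanation of the exceptional set cannot be right. The pole in question is that of an \(L\)-function with trivial character at \(1\), i.e.\ of \(\zeta_F\), and \(\zeta_F\) has a simple pole at \(1\) for \emph{every} totally real \(F\). With suitable \(\chi\), such a pole also occurs for other \(\mu\) with \(n+1\le\mu\le 2n\). What singles out \(\mu=n+1\), \(F=\bbQ\) is an order count. The archimedean factor vanishes at every \(v\in\bfa\); for \(T=0\) its total order is \([F:\bbQ]\min(n,\mu-n)\). This kills the at most simple pole of the finite part unless \([F:\bbQ]=1\) and \(\mu=n+1\).

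\textbf{What is missing.} The missing idea is this zero-versus-pole bookkeeping, carried out for every degenerate \(T\) and every cell. It must also account for possible zeros of the remaining \(L\)-values at non-positive integers. Without it, Step 3 is an assertion rather than a proof.

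A minor point: moderate growth does not follow from being a Casimir eigenfunction. It comes from the general theory of Eisenstein series, or from the Fourier expansion itself.
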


Let \(\bsk,\bsl\) be families of dominant integral weights such that \(\ell(\bsk_v), \ell(\bsl_v) \leq r\leq n\), and \(\ell(\bsk_v) + \ell(\bsl_v) \leq \kappa_v+\nu_v\) for each infinite place \(v\) of \(F\).
We put \(\rho_r = {\det}^\kappa\rho_{r,\bsk}\boxtimes{\det}^\nu\rho_{r,\bsl}\) and \(\rho_n = {\det}^\kappa\rho_{n,\bsk}\boxtimes{\det}^\nu\rho_{n,\bsl}\).

\begin{definition}
  We define
  \[\varepsilon(f)_{\kappa,\nu,v}^n(g,s ; \chi)=\left\{
    \begin{array}{ll}
      \abs{\det \left(\adj{A_r}A_r\right)}_v^{s}\chi_v(\det A_r^*)f(h_r)                               & (v\in\bfh), \\
      \norm{\delta(g)\delta(h_r)^{-1}}^{\kappa_v+\nu_v-2s}\rho_n(M(g))^{-1}\rho_r(M(h_r))f(h_r) \qquad & (\va )
    \end{array}
    \right.\]
  for \(f\in \calS_{r}(\rho_r)\) (\(r<n\))
  and \(g=t_{n,r}(A_r)\, \mu_r\,  s_{n,r}(h_r)\, k \in G_{n,v}\) with
  \(A_r\in \GL_{n-r}(K_v)\), \(\mu_r\in U_{n,r}\),
  \(h_r\in G_{r,v}\) and \(k\in \frakK_{n,v}\).
  Then, we put
  \[\varepsilon(f)_{\kappa,\nu}^n(g,s;\chi)=\prod_v\varepsilon(f)_{\kappa,\nu,v}^n(g_v,s ;\chi)\]
  and define the Hermitian Klingen-Eisenstein series \(\left[f\right]_{\kappa,\nu}^n(g,s;\chi)\) on
  \(G_{n}(\bbA)\) associated with \(f\) by
  \[\left[f\right]_{\kappa,\nu}^n(g,s; \chi)=\sum_{\gamma\in P_{n,r}\backslash G_n}\varepsilon(f)_{\kappa,\nu}^n(\gamma g,s; \chi).\]
\end{definition}

\subsection{Pullback formula}\label{subsec:pullback}
Let \(n_1\) and \(n_2\) be positive integers with \(n_1 \geq n_2\), and put \(n = n_1 + n_2\).
Let \(m = m_F\) be the number of infinite places of \(F\).
Let \(\kappa = (\kappa_v)_v, \nu = (\nu_v)_v\) be families of positive integers,
such that \(\kappa_v + \nu_v = \mu\) for each infinite place \(v\) of \(F\), where \(\mu\) is the fixed positive integer as above,
and let
\(\bsk = (\bsk_v)_v\) and \(\bsl = (\bsl_v)_v\) be families of dominant integral weights
such that \(\ell(\bsk_v) \leq n_2\), \(\ell(\bsl_v) \leq n_2\), and
\(\ell(\bsk_v) + \ell(\bsl_v) \leq \kappa_v + \nu_v\) for each infinite place \(v\) of \(F\).
Assume moreover that \(\mu>n\).
We put \(\rho_{n_1} = {\det}^{\kappa}\rho_{n_1,\bsk}\boxtimes{\det}^\nu\rho_{n_1,\bsl}\) and
\(\rho'_{n_2} = {\det}^{\kappa}\rho_{n_2,\bsl}\boxtimes{\det}^\nu\rho_{n_2,\bsk}\).
Let \(P_{\bsk,\bsl}^{(n_1,n_2)}(T)\) be the polynomial constructed in \cite[Theorem~5.13]{takeda2025differential},
with scalar weight \(\kappa+\nu\), and set
\begin{equation}\label{eq:differential_operator}
  \dkl^{(n_1,n_2)}
  = P_{\bsk,\bsl}^{(n_1,n_2)}(\pi^+),
\end{equation}
which satisfies condition~{\rm(A)} for \({\det}^{\kappa}\boxtimes{\det}^{\nu}\) and
\(\rho_{n_1}\boxtimes\rho'_{n_2}\).
When no confusion can arise, we simply write \(\dkl=\dkl^{(n_1,n_2)}\).

Let \(\chi_{K/F}\) be the quadratic character
associated with the quadratic extension \(K/F\).
For a Hecke eigenform \(f\) on \(\tilg^+_{n}(\bbA)\) of weight \((\rho,V)\), and a Hecke character \(\eta\) of \(K\), we set
\[
  D(s,f;\eta)
  = L(s-n+\tfrac{1}{2},\, f\otimes \eta,\, \mathrm{St})
  \cdot \left( \prod_{i=0}^{2n-1} L_F(2s-i,\, \eta \cdot \chi_{K/F}^i) \right)^{-1},
\]
where \(L(*, f\otimes \eta, \mathrm{St})\) denotes the standard \(L\)-function of degree \(2n\) attached to \(f\otimes \eta\) as an automorphic form on \(G_{n}(\bbA)\).
Here, \(L_F(*,\eta)\) (resp. \(L_F(*, \eta \cdot \chi_{K/F})\)) is the Hecke \(L\)-function attached to \(\eta\) (resp. \(\eta \cdot \chi_{K/F}\)).

We put
\[
  g^\natural
  =
  \begin{pmatrix} 0 & I_n \\ I_n & 0 \end{pmatrix}
  g
  \begin{pmatrix} 0 & I_n \\ I_n & 0 \end{pmatrix},
  \qquad
  f^\dagger_\chi(g)=\chi(g)\,\overline{f(g^\natural)},
\]
for a Hecke character \(\chi\) of \(K\) satisfying \eqref{eq:infty_character}.
Then, if \(f\in \calA_n^{\sharp}(\det^\kappa\rho\boxtimes\det^\nu\tau)\),
we have
\(  f^\dagger_\chi \in \calA_n^{\sharp}(\det^\kappa\tau\boxtimes\det^\nu\rho)\).

\begin{theorem}\label{thm:pullback}
  Assume that \(\Re(s)>n\).
  Let the above notation and assumptions stand.
  Then, for any Hecke cusp form \(f\in \calS^\sharp_{n_2}(\rho'_{n_2})\), we have
  \[
    \left(
    f,\,
    (\dkl E_{n,\kappa,\nu})(\iota_{n_1,n_2}(g_1,*),\overline{s};\chi)
    \right)
    =
    c(s,\rho'_{n_2})\,
    D(s,f;\overline{\chi})\,
    \overline{[f^\dagger_\chi]_{\kappa,\nu}^{n_1}(g_1,s; \chi)},
  \]
  where \(c(s,\rho'_{n_2})\) is a function depending only on \(\rho'_{n_2}\),
  which is defined in \cite{Takeda2025pullback}.
\end{theorem}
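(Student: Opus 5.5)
The plan is the doubling method of Garrett--Shimura, in the form used for Siegel modular forms by Böcherer and Katsurada--Mizumoto, with the differential operator \(\dkl\) inserted. We unfold the Eisenstein series \(E_{n,\kappa,\nu}\) along the double coset decomposition
\[
P_{n,0}\backslash G_n/\iota_{n_1,n_2}(G_{n_1}\times G_{n_2}).
\]
Its representatives \(\xi_j\) are indexed by the rank \(j\) (\(0\le j\le n_2\)) of the lower-left block, which is the standard Shimura description for \(\rmu_{n,n}\). We write \(E=\sum_j E^{(j)}\), where \(E^{(j)}\) is the sum over the \(j\)-th orbit. Because \(\dkl=P(\pi^+)\) is a right-invariant differential operator and \(\iota_{n_1,n_2}\) is a group homomorphism, \(\dkl\) can be applied term by term to \(\varepsilon_{n,\kappa,\nu}(\gamma\iota(g_1,g_2),s;\chi)\). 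Absolute convergence for \(\Re(s)>n\) justifies interchanging the sum, the differentiation and the Petersson integral.

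\textbf{Step 1 (vanishing of the non-generic terms).} For \(j<n_2\), each orbit sum \(E^{(j)}\), after pullback, is, in the variable \(g_2\), an Eisenstein-type series induced from the proper parabolic \(P_{n_2,n_2-j}\) of \(G_{n_2}\). Integrating against the cusp form \(f\) in \(g_2\) unfolds to an integral over \(N(F)\backslash N(\bbA)\) of \(f\) along the unipotent radical of that parabolic, which vanishes by cuspidality. This uses that \(\dkl\) commutes with the left translations involved. Hence only the open orbit \(j=n_2\) contributes.

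\textbf{Step 2 (the open orbit).} For \(j=n_2\), the stabilizer is \(P_{n_1,n_1-n_2}\) together with a diagonal copy of \(G_{n_2}\). The open-orbit sum becomes
\[
\sum_{\gamma\in P_{n_1,n_1-n_2}\backslash G_{n_1}}\ \sum_{h\in G_{n_2}} \varepsilon\bigl(\xi\,\iota(\gamma g_1, h\,g_2)\bigr).
\]
Unfolding the sum over \(h\) against \(\int_{G_{n_2}\backslash G_{n_2}(\bbA)}\) gives a sum over \(\gamma\) of
\[
Z(g_1)=\int_{G_{n_2}(\bbA)} \overline{\dkl\varepsilon(\xi\,\iota(g_1,h))}\ \text{paired with } f(h)\,dh .
\]
By the Iwasawa decomposition \(g_1=t_{n_1,n_2}(A)\,\mu\, s_{n_1,n_2}(h_1)\,k\), this integral factors as the inducing section of the Klingen series times a zeta integral \(\int \varepsilon(\xi\,\iota(h_1,h))f(h)\,dh\). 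This zeta integral is a matrix-coefficient integral that factorizes over places, \(Z=\prod_v Z_v\).

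\textbf{Step 3 (local computations).}
\begin{itemize}
\item At finite places, \(f\) is spherical, and the unramified doubling integral against the spherical section gives \(L_v(s-n_2+\tfrac12,f\otimes\overline\chi,\mathrm{St})\) divided by the normalizing product of \(L_{F,v}(2s-i,\overline\chi\chi_{K/F}^i)\). This is the known unramified computation of Piatetski-Shapiro--Rallis and Li, and it reproduces \(D(s,f;\overline\chi)\).
\item In the same step, the function \(h\mapsto f(h^{\natural})\) appears through the twist by \(\xi\), and together with the central character factor this is exactly what produces \(\overline{[f^\dagger_\chi]^{n_1}_{\kappa,\nu}(g_1,s;\chi)}\).
\item At archimedean places, \(Z_v\) reduces to applying the polynomial \(P_v(\pi^+)\) to \(|\delta|^{\mu-2s}\delta^{-\mu}\) and integrating against the lowest \(\frakK\)-type vector of \(\rho'_{n_2}\). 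By the equivariance in Proposition~\ref{prop:diff}(1)(b) and Schur's lemma, the result is a scalar \(c(s,\rho'_{n_2})\) times the identity \(\rho_{n_1}\)-valued factor that matches \(\rho_n(M(g))^{-1}\rho_r(M(h_r))\) in the definition of \(\varepsilon(f)^n\).
\end{itemize}

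\textbf{Main obstacle.} The main obstacle is the archimedean step: showing that \(\dkl\) applied to the section, and restricted, is still a scalar multiple of the correct vector-valued Klingen section. I would take this scalar computation directly from \cite{Takeda2025pullback}, where \(c(s,\rho'_{n_2})\) is defined, and not redo it.
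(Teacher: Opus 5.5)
Your outline is the standard doubling (Garrett--Shimura) unfolding with $\dkl$ applied termwise, and that is what the paper relies on: its proof just cites \cite[Theorem~5.10]{Takeda2025pullback} for $\nu=0$ and says the general case goes the same way, taking $c(s,\rho'_{n_2})$ from that reference exactly as you propose. Two things to tidy: first, the cited archimedean computation is only for $\nu=0$, which is easily handled, since at $\infty$ the $(\kappa,\nu)$-section equals $\det(g)^{\nu}$ times the $(\kappa+\nu,0)$-section and $\det^{\nu}$ passes through $P(\pi^+)$ because $\tr\pi^+_{v,ij}=0$; second, your parabolic subscripts are the reverse of the paper's $P_{n,r}$ (whose Levi is $\GL_{n-r}\times G_r$), so the open orbit gives $P_{n_1,n_2}$, which is consistent with your use of $t_{n_1,n_2}$ and $s_{n_1,n_2}$.
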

\begin{proof}
  The case where \(\nu=0\) is treated in \cite[Theorem~5.10]{Takeda2025pullback}, and the general case can be proved by a similar argument.
\end{proof}

The constant \(c(s,\rho'_{n_2})\) in the theorem is generally not explicitly stated, but in certain cases its value can be computed.
We set \(|\bsk|=\sum_{v,i} k_{v,i}\) and \(|\bsl|=\sum_{v,i} l_{v,i}\) for the tuple of dominant integral weights \((\bsk,\bsl)\).
\begin{proposition}\label{prop:c}
  Let the above notions and assumptions stand.
  Assume that \(\kappa_v+\nu_v = \mu\) for all \(\va\).
  Then the constant \(c(\mu/2,\rho'_{n_2})\) appearing in Theorem~\ref{thm:pullback}
  is given by
  \begin{align*}
    c(\mu/2,\rho'_{n_2})
    ={} & 2^{-2mn_2^2 - m(n_2-2)\mu + |\bsk| + |\bsl|}\,
    \pi^{mn_2^2}                                         \\
        & \cdot
    \prod_{\va}
    \Biggl(
    \frac{
      \prod_{i=1}^{\ell(\bsk_v)}
      \bigl(k_{v,i} + \ell(\bsk_v) - i\bigr)!
    }{
      \prod_{1 \le i < j \le \ell(\bsk_v)}
      \bigl(k_{v,i} - k_{v,j} + j - i\bigr)
    }
    \cdot
    \frac{
      \prod_{i=1}^{\ell(\bsl_v)}
      \bigl(l_{v,i} + \ell(\bsl_v) - i\bigr)!
    }{
      \prod_{1 \le i < j \le \ell(\bsl_v)}
      \bigl(l_{v,i} - l_{v,j} + j - i\bigr)
    }                                                    \\
        & \hspace{25mm}
    \cdot
    \prod_{i=1}^{n_2}
    \frac{1}{
      \bigl(k_{v,i} + l_{v,i} + \mu - i\bigr)_{(n_2)}
    }
    \Biggr),
  \end{align*}
  where \((x)_{(r)} = x(x-1)\cdots(x-r+1)\) denotes the descending Pochhammer symbol.
\end{proposition}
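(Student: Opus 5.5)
Since \(c(s,\rho'_{n_2})\) is a purely archimedean quantity defined in \cite{Takeda2025pullback}, it factors as a product over \(\va\) of local constants, and I will compute it one place at a time at \(s=\mu/2\). Recall how \(c\) arises in the proof of Theorem~\ref{thm:pullback}. One unfolds the Petersson integral of \(f\) against \((\dkl E_{n,\kappa,\nu})\circ\iota_{n_1,n_2}\) via the double coset decomposition \(P_{n,0}\backslash G_n/(G_{n_1}\times G_{n_2})\). The only surviving contribution is the main orbit. The unramified finite places give \(D(s,f;\overline{\chi})\), and the archimedean place gives a local zeta integral
\[
  c_v(s)=\int_{G_{n_2,v}} \bigl\langle \rho'_{n_2}(\cdots)\,(\dkl \varepsilon_{n,\kappa,\nu,v})(\iota(1,h),s)\,,\,\phi_v(h)\bigr\rangle\,dh
\]
against the lowest \(K\)-type matrix coefficient \(\phi_v\) of the holomorphic discrete series of weight \(\rho'_{n_2}\). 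So the plan is to (i) evaluate \(\dkl\varepsilon_{n,\kappa,\nu,v}\) at \(s=\mu/2\), and (ii) evaluate the resulting integral over \(\hus{n_2}\).

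For (i), at \(s=\mu/2\) we have \(\kappa_v+\nu_v-2s=0\), so \(\varepsilon_{n,\kappa,\nu,v}\) is holomorphic, essentially \(\det(Z)^{-\mu}\)-type after translation. Applying \(P^{(n_1,n_2)}_{\bsk,\bsl}(\pi^+)\) amounts to applying \(P\bigl(\partial_Z\bigr)\) to \(\det(\bfi_n + \ldots)^{-\mu}\), up to factors \(2^{\ast}\) and \((\sqrt{-1})^{\ast}\) coming from the Cayley transform \(\frakc\). Here I will use the key identity from \cite{takeda2025differential} and Ibukiyama \cite{Ibukiyama2014Higher}: for a pluriharmonic \(\widetilde P\),
\[
  P(\partial_Z)\det(Z)^{-\mu}
  = c_{\mu}\,P(Z^{-1})\det(Z)^{-\mu},
\]
with \(c_\mu\) a product of Pochhammer symbols. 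Restricted to the off-diagonal block, this produces the factors \(\prod_i (k_{v,i}+l_{v,i}+\mu-i)\)-type terms. After pulling back, the function becomes \(P\) evaluated at the off-diagonal block \(\det(\ast)^{-\mu}\).

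For (ii), I reduce the integral over \(h\in\hus{n_2}\) to a Hermitian-matrix beta/Gamma integral of the form
\[
  \int_{\hus{n_2}} \det(Y)^{\mu-2n_2}\,
  \bigl|\det\bigl(\tfrac{Z-\overline{W}}{2\sqrt{-1}}\bigr)\bigr|^{-2\mu}\cdots\,dZ,
\]
which evaluates by the reproducing kernel property. By Schur orthogonality, the \(\rho'_{n_2}\)-projection gives a factor equal to the formal degree of the holomorphic discrete series. The formal degree is a ratio of Hermitian Gamma functions \(\Gamma_{n_2}(\mu+\bsk+\bsl-n_2)/\Gamma_{n_2}(\mu+\bsk+\bsl)\), which yields \(\pi^{n_2^2}\) and the descending Pochhammer product \(\prod_i (k_{v,i}+l_{v,i}+\mu-i)_{(n_2)}^{-1}\). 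The normalization of \(P\) against the bideterminant inner product (the Weyl dimension/hook-length type quantity) gives the factorial-over-Vandermonde factors for \(\bsk_v\) and \(\bsl_v\) separately. I expect this because \(\widetilde P\) factors through the \((\GL_{n_2}\times\GL_{n_2})\)-equivariant map \(V_{\bsk}\otimes V_{\bsl}\), and \(\langle \det U_J,\det U_J\rangle\)-type computations give \(\prod_i(k_i+\ell-i)!/\prod_{i<j}(k_i-k_j+j-i)\).

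The main obstacle is bookkeeping. I would track the powers of \(2\) coming from \(\frakc\), from \(\delta\), and from the measure \((\det Y)^{-2n}dZ\), to obtain \(2^{-2n_2^2-(n_2-2)\mu+|\bsk_v|+|\bsl_v|}\) per place. I would also check the pluriharmonic norm constant. To control this, I would first verify the scalar case \(\bsk=\bsl=0\) against the known Shimura/Garrett constant, and then treat the vector-valued correction as a ratio.
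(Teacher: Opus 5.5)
\textbf{Verdict: genuine gap.} The paper does no computation for this statement. It quotes \cite[Theorem~6.6]{takeda2025differential} for \(\nu=0\) and says the general case is analogous. Your plan instead evaluates the archimedean zeta integral from scratch, and its central step (i) fails. The identity
\[
  P(\partial_Z)\det(Z)^{-\mu}=c_\mu\,P(Z^{-1})\det(Z)^{-\mu}
\]
holds for special polynomials (e.g.\ \(P=\det\), by Cayley's identity), but not for the polynomials defining \(\dkl\). Those are pluriharmonic only blockwise, in each pair \((X_i,Y_i)\) (Proposition~\ref{prop:diff}(a)). Pluriharmonicity in all index pairs would force \(P(X\,{}^tY)\) to be constant, so that reading of your hypothesis is vacuous beyond the scalar case.

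In the blockwise reading the identity is already false in the simplest nonscalar case. Take \(n_1=n_2=1\) and \(\bsk=\bsl=(1)\). Conditions (a) and (b) force \(P(T)=t_{11}t_{22}-\mu\,t_{12}t_{21}\) up to a scalar, and a direct computation with \(Z=(z_{ij})\) gives
\[
  P(\partial_Z)\det(Z)^{-\mu}=-\mu(\mu^2-1)\,z_{12}z_{21}\det(Z)^{-\mu-2},
  \qquad
  P(Z^{-1})\det(Z)^{-\mu}=(z_{11}z_{22}-\mu\,z_{12}z_{21})\det(Z)^{-\mu-2}.
\]
These are not proportional. What you actually need is a formula for \(P(\partial_Z)\) applied to the main-orbit term \(\det(CZ+D)^{-\mu}\) of the unfolded Eisenstein series, followed by restriction to the block-diagonal. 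For the standard representative, \(CZ+D\) has the diagonal and off-diagonal blocks of \(Z\) interchanged. That step is where the Pochhammer constants arise; in the example above it produces \(-\mu(\mu^2-1)\) times the kernel. Your proposal has no substitute for it.

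A second gap is normalization. \(c(\mu/2,\rho'_{n_2})\) scales with the choice of \(P^{(n_1,n_2)}_{\bsk,\bsl}\), which is unique only up to a scalar (Proposition~\ref{prop:diff}(2)). So the factorial-over-Vandermonde factors and the power of \(2\) cannot be derived without the specific normalization of \cite[Theorem~5.13]{takeda2025differential}, which you never fix. The other factors are matched to the target only heuristically. For instance, a ratio of Hermitian Gamma functions contributes no power of \(\pi\), so \(\pi^{n_2^2}\) must come from the integral and the measure. As written, the proposal is a plausible account of where each factor should come from, not a proof.

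The paper's route avoids all of this by quoting the \(\nu=0\) case, and the passage to general \(\nu\) is cheap:
\begin{itemize}
\item \(\dkl\) depends only on \(\kappa+\nu=\mu\).
\item At \(s=\mu/2\) the archimedean section is \(\det(g)^{\nu_v}\delta(g)^{-\mu}\), which differs from the \(\nu=0\) section only by the character \(\det^{\nu_v}\).
\item This character commutes with \(\pi^+\), because the \(\pi^+_{v,ij}\) are traceless.
\item It is absorbed by the \(\det^{\nu}\)-twist in the weight of \(f\), so the local integral is unchanged.
\end{itemize}
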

\begin{proof}
  The case where \(\nu=0\) is treated in \cite[Theorem~6.6]{takeda2025differential}, and the general case can be proved by a similar argument.
\end{proof}

\section{Integrality of Hermitian automorphic forms}\label{sec:integrality}
Hereafter, we restrict our attention to the case where
\(F = \bbQ\), \(K = \bbQ(\sqrt{-D_K})\) with discriminant \(-D_K\) \((D_K>0)\).
In this setting, finite places are represented by prime numbers \(p\), while there is a unique infinite place, which we denote by \(\infty\).
We also denote \(\chi_{K/F}\) by \(\chi_{-D_K}\).

Since
\[
  \left| \bbQ^\times \backslash \bbA^\times
  / \prod_p\bbZ_p^\times \bbR^\times \right| = 1
\]
in \eqref{eq:representative}, we write the complete set of representatives in \eqref{eq:representative} as
\(\frakG_0=\{\gamma_{11},\ldots,\gamma_{1h_2}\}=\{\gamma_1,\ldots,\gamma_h\}\).

\begin{lemma}
  There is an isomorphism
  \[
    \widetilde{\calA}_n(\rho,\mathbf{1})
    \;\cong\;
    \calA_n(\rho),
    \qquad
    f \;\mapsto\; f|_{G_{n}(\bbA)},
  \]
  where \(\mathbf{1}\) denotes the trivial character of \(\bbR_{>0}\).
\end{lemma}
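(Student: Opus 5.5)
The plan is to show that every element of \(\tilg^+_n(\bbA)\) factors through \(G_n(\bbA)\) up to left translation by \(\tilg^+_n(\bbQ)\) and right translation by \(\tilk^+_{n,\bbA}\). The transformation law with trivial character then shows that restriction is injective, and the same factorization lets us extend any \(f\in\calA_n(\rho)\) to \(\tilg^+_n(\bbA)\), giving surjectivity.

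\textbf{Factorization.} Take \(g\in\tilg^+_n(\bbA)\). Its similitude \(\nu(g)\) lies in \(\bbA^\times\) with positive infinite component. Since \(F=\bbQ\) has class number one, we can write
\[
\nu(g)=q\cdot u\cdot t,\qquad q\in\bbQ_{>0}^\times,\ u\in\prod_p\bbZ_p^\times,\ t\in\bbR_{>0}.
\]
This decomposition is unique, because \(\bbQ_{>0}^\times\cap\prod_p\bbZ_p^\times\bbR_{>0}=\{1\}\). Note that \(\bfs(q)\in\tilg^+_n(\bbQ)\) and \(\bfs(u)\in\tilk_{n,0}\). At infinity, \(t^{1/2}I_{2n}\) lies in \(\tilk^+_{n,\infty}\) and has similitude \(t\). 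Hence
\[
h:=\bfs(q)^{-1}\,g\,\bfs(u)^{-1}\,(t^{1/2}I_{2n})^{-1}
\]
has similitude \(1\), so \(h\in G_n(\bbA)\), and
\[
\tilg^+_n(\bbA)=\tilg^+_n(\bbQ)\,G_n(\bbA)\,\tilk^+_{n,\bbA}.
\]

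\textbf{Injectivity.} Let \(f\in\widetilde{\calA}_n(\rho,\mathbf 1)\). Apply the transformation law with \(k=\bfs(u)\,t^{1/2}I_{2n}\). Its infinite part is \(k_\infty=t^{1/2}I_{2n}\), so \(\nu(k_\infty)^{-1/2}k_\infty=I_{2n}\), whence \(\rho(\cdot)=\mathrm{id}\), and \(\omega=\mathbf 1\). This gives \(f(g)=f(h)\). Thus \(f\) is determined by \(f|_{G_n(\bbA)}\). The restriction clearly satisfies the defining conditions of \(\calA_n(\rho)\), because \(\frakK_{n,\bbA}\subset\tilk^+_{n,\bbA}\) and \(G_n(\bbQ)\subset\tilg^+_n(\bbQ)\).

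\textbf{Surjectivity.} Given \(f\in\calA_n(\rho)\), define
\[
\tilde f(\gamma h k)=\rho\bigl(\nu(k_\infty)^{-1/2}k_\infty\bigr)^{-1}f(h)
\]
for \(\gamma\in\tilg^+_n(\bbQ)\), \(h\in G_n(\bbA)\), \(k\in\tilk^+_{n,\bbA}\). The key step, which I expect to be the main (if mild) obstacle, is well-definedness. Suppose \(\gamma hk=\gamma'h'k'\). Taking similitudes, \(\nu(\gamma'^{-1}\gamma)\in\bbQ^\times_{>0}\) equals \(\nu(k'k^{-1})\in\prod_p\bbZ_p^\times\bbR_{>0}\), so both equal \(1\). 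Hence \(\delta:=\gamma'^{-1}\gamma\in G_n(\bbQ)\) and \(\kappa:=k'k^{-1}\) is similitude-\(1\) in \(\tilk^+_{n,\bbA}\), i.e.\ \(\kappa\in\frakK_{n,\bbA}\) (at infinity the positive scalar must be \(1\)). Then \(h'=\delta h\kappa^{-1}\), and the invariance of \(f\) under \(G_n(\bbQ)\) on the left and \(\frakK_{n,\bbA}\) on the right gives the required equality. This uses that \(\rho\) is a homomorphism and that the normalized element \(\nu(k_\infty)^{-1/2}k_\infty\) is multiplicative in \(k\).

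Finally, one checks that \(\tilde f\) inherits the remaining conditions from \(f\):
\begin{itemize}
  \item The transformation law holds by construction, with \(\omega=\mathbf 1\).
  \item Smoothness holds because \(G_n(\bbA)\tilk^+_{n,\bbA}\) is open and the factorization is locally smooth.
  \item Moderate growth holds because the \(\bbQ_{>0}\)- and central factors do not affect norms beyond bounded amounts.
  \item \(Z(\frakg_n)\)-finiteness holds because the Lie algebra of \(\gu\) is that of \(\rmu\) plus the center, which acts on \(\tilde f\) through the scalar \(\rho\)-action.
\end{itemize}
The two constructions are mutually inverse, which proves the isomorphism.
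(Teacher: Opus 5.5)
Your proof is correct, and it takes a more direct route than the paper. The paper works with double cosets. It takes the decomposition \eqref{eq:representative}, which comes from strong approximation for \(\mathrm{SU}(n,n)\), and uses the representatives \(\gamma_s\), which have similitude \(1\). It then observes that \(\tilgam_s=G_n(\bbQ)\cap(\gamma_s\frakK_{n,0}\gamma_s^{-1}G_{n,\infty})\). Finally it uses \(\gu^+(n,n)=\bbR_{>0}\cdot\rmu(n,n)\) to identify \(\tilg^+_n(\bbQ)\backslash\tilg^+_n(\bbA)/\tilk_{n,0}\) with \(\bbR_{>0}\times G_n(\bbQ)\backslash G_n(\bbA)/\frakK_{n,0}\). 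The step from this bijection of spaces to the isomorphism of function spaces is left implicit.

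You avoid the representatives and strong approximation entirely. The similitude character, together with \(\bbA^\times=\bbQ^\times_{>0}\cdot\prod_p\bbZ_p^\times\cdot\bbR_{>0}\) and \(\bbQ_{>0}\cap\prod_p\bbZ_p^\times\bbR_{>0}=\{1\}\), gives both the factorization \(\tilg^+_n(\bbA)=\tilg^+_n(\bbQ)\,G_n(\bbA)\,\tilk^+_{n,\bbA}\) and the uniqueness modulo \(G_n(\bbQ)\) and \(\frakK_{n,\bbA}\) needed for well-definedness. The arithmetic input is the same in both arguments. Yours is more elementary, writes down the inverse map, and addresses the analytic conditions. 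The paper's version makes visible that the identification is compatible with the \(\sharp\)-construction through the same \(\gamma_s\). That compatibility is how the lemma is used afterwards, for instance in defining \(\calA^\sharp_n(\rho_n)(R)\) through \(A_f(\gamma_j,S)\).

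One justification needs repair. The factors \(\bfs(q)\) and \(t^{1/2}I_{2n}\) are not bounded uniformly in \(g\), so moderate growth of \(\tilde f\) does not follow from a ``bounded amount'' argument; it follows from a polynomial bound instead.
Write \(q=a/b\) in lowest terms. Then \(q^{\pm1}\le ab=\prod_p\max(|\nu(g)_p|_p,|\nu(g)_p|_p^{-1})\), and \(t^{\pm1}\le ab\max(\nu(g)_\infty,\nu(g)_\infty^{-1})\).
Since \(\adj{g}J_ng=\nu(g)J_n\), the idelic height of \(\nu(g)\) is at most a power of \(\|g\|\). Hence \(\|h\|\ll\|g\|^c\), and \(|\tilde f(g)|=|f(h)|\) has moderate growth.

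Your \(Z(\frakg_n)\) remark should also be stated more precisely: the centre of \(\frakg_n\) acts on \(\tilde f\) by \(0\). This holds because \(\omega=\mathbf 1\) and \(\nu(k_\infty)^{-1/2}k_\infty=I_{2n}\) for scalar \(k_\infty\).
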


\begin{proof}
  We set
  \begin{align*}
    \tilgam_s
     & =
    \tilg^+_n(\bbQ)\cap
    \bigl(\gamma_s \tilk_{n,0}\gamma_s^{-1}\cdot \tilg^+_{n,\infty}\bigr) \\
     & =
    G_n(\bbQ)\cap
    \bigl(\gamma_s \frakK_{n,0}\gamma_s^{-1}\cdot G_{n,\infty}\bigr)
  \end{align*}
  for \(\gamma_s\in \frakG_0\).
  Then, by \eqref{eq:representative}, we have
  \begin{align*}
    \tilg^+_{n}(\bbQ)\backslash \tilg^+_{n}(\bbA) / \tilk_{n,0}
     & \;\cong\;
    \bigsqcup_{s=1}^h \tilgam_s \backslash \tilg^+_{n,\infty} \\
     & \;\cong\;
    \bbR_{>0}
    \times \bigl(G_{n}(\bbQ)\backslash G_{n}(\bbA) / \frakK_{n,0}\bigr),
  \end{align*}
  and the assertion follows.
\end{proof}
Henceforth, we identify these spaces via the above isomorphism.

In the following, we further assume that \(\mu\) is a positive even integer, \(\kappa=\nu=\mu/2\), and the Hecke character \(\chi\) is trivial.
For simplicity, we adopt the following notation:
\[
  E_{n,\mu}(g)
  := E_{n,\kappa,\nu}(g,\mu/2;\chi),\qquad
  f^\dagger
  := f^\dagger_{\chi},\qquad
  [f]_\mu^{n}(g)
  := [f]_{\kappa,\nu}^{n}(g,\mu/2;\chi),
\]
for any Hecke eigenform
\( f \in \calA^{\sharp}_r(\rho) \).

We realize the representation space of \(\rho_{n,(\bsk,\bsl)}\) as
\(\bbC[U,V]_{(\bsk,\bsl)}\),
using bideterminants, and define \(R[U,V]_{(\bsk,\bsl)}\) for a commutative
ring \(R\) in the same manner.

For a subring \(R\subset \bbC\), we set
\[
  \calA^{\sharp}_n(\rho_n)(R)
  = \{\, f\in \calA^{\sharp}_n(\rho_n)
  \mid A_f(\gamma_{j},S)
  \in R[U,V]_{(\bsk,\bsl)},\
  \forall\,\gamma_{j},\ S\in\Her_n(K)\,\}.
\]
and
\[
  \calS^{\sharp}_{n}(\rho_n)(R)=\calA^{\sharp}_n(\rho_n)(R)\cap \calS_{n}(\rho_n).
\]

To study the integrality properties of Eisenstein series,
we recall results of Shimura~\cite{shimura1997Euler,Shimura1982Confluent}
on the Fourier expansions of Eisenstein series.

Let \(b_p(s;S)\) denote the local Siegel series defined in
\cite[Section~18]{shimura1997Euler} for \(p\in\bfh\), \(s\in\bbC\) and  \(S\in\Her(K_p)_{\geq 0}\).
For \(S\in\Her(K_p)_{\geq 0}\) with \(\det S\neq 0\),
there exists a polynomial \(F_p(X;S)\) in \(X\) with constant term \(1\) and
coefficients in \(\bbZ\) such that
\begin{equation}\label{eq:Siegel_series}
  b_p(s;S)
  =
  \prod_{i=1}^{\lfloor (n+1)/2\rfloor}\bigl(1-p^{-s-2i}\bigr)
  \prod_{i=1}^{\lfloor n/2\rfloor}\bigl(1-\xi_p p^{-s-2i+1}\bigr)\,
  F_p\bigl(p^{-s};S\bigr)
  ,
\end{equation}
where
\[
  \xi_p=
  \begin{cases}
    1  & \text{if \(K_p\simeq \bbQ_p\times \bbQ_p\),}                   \\
    -1 & \text{if \(K_p/\bbQ_p\) is an unramified quadratic extension,} \\
    0  & \text{if \(K_p/\bbQ_p\) is a ramified quadratic extension.}
  \end{cases}
\]
\begin{remark}
  The Siegel series in the inert case is treated in \cite{hironaka1999spherical}.
  The split case is discussed in \cite{sato2005fourier}.
\end{remark}

\begin{proposition}[\texorpdfstring{%
      \cite[Propositions~18.14 and~19.2]{shimura1997Euler}, \cite{Shimura1982Confluent}%
    }{Shimura}]\label{prop:Eisen_Fourier}
  Let \(\mu\) be a positive even integer with \(\mu \ge n\) and \(\mu\neq n+1\).
  Then, for any non-degenerate \(S \in \Her_n(K)\), the Fourier coefficient
  \[
    A_{n,\mu}(g,S)
    :=
    A_{E_{n,\mu}}(g,S)
  \]
  of the Hermitian Eisenstein series
  \(E_{n,\mu}(g)\)
  is given by
  \begin{align*}
    A_{n,\mu}(\bfm(A),S)
    ={} & \epsilon_n\cdot2^n
    (D_K^{\lfloor n/2\rfloor}\det S)^{\mu-n} \\
        & \quad\cdot
    \prod_{p\in\bfh}|\det(A_pA_p^*)|_p^{\,n-\mu/2}
    |\det(A_\infty A_\infty^*)|^{\,\mu/2}    \\
        & \quad\cdot
    \prod_{i=0}^{n-1}L(1-\mu+i,\chi_{-D_K}^{i})^{-1}
    \prod_{p\in\bfc}F_p(p^{-\mu};S[A_p])     \\
        & \quad\cdot
    \exp(-\tr(2\pi S A_\infty\adj{A_\infty})).
  \end{align*}
  where \(\epsilon_n = \begin{cases} -1 & (n \equiv 2 \pmod 4) \\ 1 & (\text{otherwise}) \end{cases}\),
  for \(A\in\GL_n(\bbA_K)\).
  Here, \(\bfc\) is the subset of \(\bfh\) consisting of all the finite places \(p\) of the following two types: (i) \(p\) is ramified in \(K\), (ii) \(p\) is unramified in \(K\) and \(\det(\adj{A_p} S A_p)\notin \bbZ_p^\times\).

  In particular, we have
  \begin{align*}
    A_{n,\mu}(\gamma_{i},S)
    = & \;\epsilon_n\cdot2^n
    (D_K^{\lfloor{n/2}\rfloor}\det S)^{\mu-n}
    \,
    \prod_{i=0}^{n-1}
    L(1-\mu+i, \chi_{-D_K}^{i})^{-1} \\
      & \qquad
    \prod_{p\in\bfh}|\det(z_{i,p}\adj{z_{i,p}})|_p^{\,n-\mu/2}
    \prod_{p\in\bfc}
    F_p\bigl(p^{-\mu}; S[\bft(z_{i,p})]\bigr)
  \end{align*}
  for \(\gamma_i\in \frakG_0\).
\end{proposition}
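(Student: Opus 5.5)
The plan is to deduce the statement from Shimura's general formula for $A_{n,\mu}(\bfm(A),S)$ (first display) by specializing to the representatives $\gamma_i\in\frakG_0$. With $F=\bbQ$ the set $N_0$ is trivial, so $\gamma_i=\bft(z_i)=\bfm(\bft_0(z_i))$ with $z_i\in\bbA_{K,0}^\times$. Hence $A_{n,\mu}(\gamma_i,S)=A_{n,\mu}(\bfm(A),S)$ for the adelic matrix $A=\bft_0(z_i)$, whose archimedean component is $A_\infty=I_n$ and whose finite components are $A_p=\bft_0(z_{i,p})=\diag(I_{n-1},z_{i,p})$.

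I then substitute term by term into the first formula.
\begin{enumerate}
\item The archimedean factor becomes $|\det(A_\infty A_\infty^*)|^{\mu/2}=1$.
\item The exponential becomes $\exp(-2\pi\tr S)$. This is precisely the factor absorbed by the normalization $\eta(g_\infty,S)$ at $g_\infty=1$, via the relation $a(f_{ij},S)=A_f(\gamma_{ij},S)$ of Proposition~\ref{prop:Fourier}. Equivalently, evaluating at $\bfi_n$ gives $\bfe(\tr(S\bfi_n))=\exp(-2\pi\tr S)$.
\item Since $\det(A_pA_p^*)=z_{i,p}\overline{z_{i,p}}$, the finite factor is $\prod_p|\det(z_{i,p}z_{i,p}^*)|_p^{\,n-\mu/2}$. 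Only the last diagonal entry contributes, so this reads $|z_{i,p}\overline{z_{i,p}}|_p^{\,n-\mu/2}$.
\item The local factors become $F_p(p^{-\mu};S[\bft_0(z_{i,p})])$, which is the notation $S[\bft(z_{i,p})]$ used in the displayed formula. The set $\bfc$ is now defined using $\det(S[A_p])=\det S\cdot z_{i,p}\overline{z_{i,p}}$.
\end{enumerate}

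The only point needing care is this normalization of the exponential. If the displayed identity is read as an equality of the holomorphic coefficients $a(E_{n,\mu,i},S)$, Proposition~\ref{prop:Fourier} removes $\eta(1,S)$. Otherwise one takes the formula literally at $g=\gamma_i$, and the exponential factor is implicit in the convention. I would check which reading matches Shimura's normalization in \cite[Prop.~18.14]{shimura1997Euler}; this bookkeeping is the main obstacle. Everything else is immediate from Shimura's propositions, which are taken as given.
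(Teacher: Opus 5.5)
Your proposal is correct and follows the same route as the paper. The paper gives no argument beyond citing Shimura for the general formula, and reads off the second display by taking $A=\bft_0(z_i)$, so that $A_\infty=I_n$, $\det(A_pA_p^*)=z_{i,p}\overline{z_{i,p}}$, and $S[A_p]$ is what the paper writes as $S[\bft(z_{i,p})]$. The normalization issue you flag is real: literal substitution leaves the factor $\exp(-2\pi\tr S)=\eta(1,S)$. The paper drops this factor through its stated convention $a(f_{ij},S)=A_f(\gamma_{ij},S)$ in Proposition~\ref{prop:Fourier}, although that proposition's own proof gives $A_f(\gamma_{ij},S)=\eta(1,S)\,a(f_{ij},S)$. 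So the second display should be read as the classical coefficient $a(E_{n,\mu,i},S)$, exactly as you propose.
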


Here, the local factors \(F_p(X;S)\) satisfy the following functional equation
(cf.~Ikeda~\cite{ikeda2008lifting}):
\[F_p(p^{-2n}X^{-1};S) = \chi_{-D_K}((-1)^{n/2}\det S)^{s_n}(p^n X)^{-v_p(D_K^{\lfloor{n/2}\rfloor}\det S)}F_p(X;S),\]
where \(s_n=0\) if \(n\) is odd and \(s_n=1\) if \(n\) is even.
Using this functional equation,
we have
\begin{align}\label{eq:F_v}
  A_{n,\mu}(\bfm(A),S)= & 2^n\prod_{i=0}^{n-1}
  L(1-\mu+i, \chi_{-D_K}^{i})^{-1}
  |\det(AA^*)|_{\bbA}^{\mu/2}\nonumber                                              \\
                        & \qquad\cdot\prod_{p\in\bfc} F_p(p^{\mu-2n};\adj{A_p}SA_p)
  \exp(-\tr(2\pi S A_\infty \adj{A_\infty})).
\end{align}

\begin{lemma}\label{lem:Fp_int}
  If \(S\in \Her_n(\calO_K)_{>0}\) is a non-degenerate Hermitian matrix, then
  \(F_p(p^{-n}X;S)\in \bbZ[X]\).
  In particular, if \(\mu\ge n\), then \(F_p(p^{\mu-2n};S)\in\bbZ\).
\end{lemma}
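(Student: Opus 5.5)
The plan is to derive the claim from the functional equation of \(F_p\) (stated just before the lemma) combined with integrality of \(F_p(X;S)\) and a degree bound. Put \(e=v_p(D_K^{\lfloor n/2\rfloor}\det S)\ge 0\), using \(S\in\Her_n(\calO_K)_{>0}\). By \eqref{eq:Siegel_series}, \(F_p(X;S)\in\bbZ[X]\) with constant term \(1\). First I will show that \(\deg F_p(X;S)\le e\) and that the top coefficient is \(\pm p^{-ne}\) times the constant term, up to the sign \(\chi_{-D_K}(\cdot)^{s_n}\).

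To obtain this, write \(F_p(X;S)=\sum_{j} a_j X^j\) with \(a_j\in\bbZ\). Substitute into
\[
F_p(p^{-2n}X^{-1};S)=\epsilon\,(p^nX)^{-e}F_p(X;S),\qquad \epsilon=\pm1 \text{ or } 1.
\]
Multiplying both sides by \(X^{e}\) gives
\[
\sum_j a_j p^{-2nj}X^{e-j}=\epsilon\, p^{-ne}\sum_j a_j X^j .
\]
The right side is a polynomial in \(X\), so the left side must be too, which forces \(a_j=0\) for \(j>e\). Comparing coefficients of \(X^{e-j}\) then yields
\[
a_j p^{-2nj}=\epsilon\,p^{-ne}a_{e-j},\quad\text{that is,}\quad a_j=\epsilon\,p^{n(2j-e)}a_{e-j}.
\]

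With this relation, the coefficient of \(X^j\) in \(F_p(p^{-n}X;S)\) is
\[
a_jp^{-nj}=\epsilon\,p^{n(j-e)}a_{e-j}.
\]
When \(j\le e\) the exponent \(n(j-e)\) is negative, so integrality is not automatic. I will therefore handle the two halves separately using the symmetry. For \(2j\ge e\) we have \(p^{-nj}a_j=\epsilon\,p^{n(j-e)}a_{e-j}\); for \(2j\le e\) we just use \(a_j\in\bbZ\) directly. Neither case is immediately integral, so the real content is a divisibility: \(p^{nj}\mid a_j\) for all \(j\). This is the main obstacle, and the functional equation alone does not give it.

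To prove this divisibility, I would go back to the explicit description of the local Siegel series as a sum over lattices (Hironaka's formula in the inert case, Sato's in the split case, Shimura's in the ramified case). These express \(b_p(s;S)\) as \(\sum_{L\supset L_0} [L:L_0]^{\,n-s}\cdot(\text{local density factor})\), in the normalization where the variable is \(X=p^{-s}\). With that normalization, each term contributing to \(X^j\) carries a factor \(p^{nj}\) coming from the index \([L:L_0]^{n}\). Dividing by the integral polynomial prefactors in \eqref{eq:Siegel_series}, which have constant term \(1\) and are hence invertible in \(\bbZ[[X]]\), keeps each coefficient of \(X^j\) in \(p^{nj}\bbZ\). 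Consequently \(F_p(p^{-n}X;S)\in\bbZ[[X]]\cap\bbQ[X]=\bbZ[X]\).

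The "in particular" statement then follows by specialising. Put \(X=p^{\mu-n}\), which is an integer because \(\mu\ge n\). This gives \(F_p(p^{-n}\cdot p^{\mu-n};S)=F_p(p^{\mu-2n};S)\in\bbZ\).
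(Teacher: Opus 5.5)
Your use of the functional equation is correct. As you note, though, it only shows that $F_p(p^{-n}X;S)=\sum_j p^{-nj}a_jX^j$ is palindromic up to sign. Everything rests on $p^{nj}\mid a_j$, and the argument you give for that fails at two points.

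First, the coefficient of $X^j$ in $b_p$ is \emph{not} in $p^{nj}\bbZ$ in general. If $p\nmid D_K\det S$, the functional equation forces $F_p(X;S)=1$, so $b_p$ is the prefactor itself. In Ikeda's normalization its coefficient of $X$ is $\equiv -1\pmod p$. As displayed in \eqref{eq:Siegel_series} it even involves negative powers of $p$, so those prefactors are not integral either. In a lattice/reduction formula only the index factor carries a compensating $p^{n}$ per unit of degree; the primitive local densities attached to each lattice are themselves polynomials in $X$ without that factor.

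Second, even if $b_p$ had this property, dividing by the prefactor would destroy it. The series with $c_j\in p^{nj}\bbZ$ form the subring $\bbZ[[p^nX]]$. A unit of $\bbZ[[X]]$ lying outside this subring, such as $1-X$ with inverse $\sum_j X^j$, does not preserve it.

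What is missing is that the prefactor must be divided out of each primitive term separately, leaving a quotient in $\bbZ[p^nX]$. For example, for $p$ inert, counting solutions over $\mathbb{F}_{p^2}$ gives $\alpha^{\mathrm{pr}}(B',X)=\prod_{j=0}^{2n-r-1}(1-(-p)^jX)$, where $r$ is the rank of $B'\bmod p$. The prefactor is $\prod_{j=0}^{n-1}(1-(-p)^jX)$. Hence
\[
  F_p(X;S)=\sum_{G}\bigl(p^{2n}X^2\bigr)^{\mathrm{ord}_p(\det G)}\prod_{j=n}^{2n-r_G-1}\bigl(1-(-p)^jX\bigr),
\]
where $G$ runs over $\GL_n(\calO_{K_p})\backslash\M_n(\calO_{K_p})$ with $\det G\neq0$ and $S[G^{-1}]$ integral, and $r_G$ is the rank of $S[G^{-1}]\bmod p$. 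This visibly lies in $\bbZ[p^nX]$.

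The split and ramified cases need analogous explicit input. The paper does not derive it at all: it cites Lemma~4.2.3 of Katsurada directly. Your final specialization $X=p^{\mu-n}$ is fine once the divisibility is established.
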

\begin{proof}
  This follows easily from Lemma~4.2.3 of Katsurada~\cite{katsurada2015Koecher}.
\end{proof}

We put
\[
  \widetilde{E}_{n,\mu}(g)
  =
  \left(\prod_{i=0}^{n-1}
  L(1-\mu+i, \chi_{-D_K}^{\,i})\right) E_{n,\mu}(g)
  =
  \left(\prod_{i=0}^{n-1}
  -\frac{B_{\mu-i,\chi_{-D_K}^{\,i}}}{\mu-i}\right) E_{n,\mu}(g),
\]
where \(B_{m,\chi}\) denotes the generalized Bernoulli number attached to a Dirichlet character \(\chi\).
By Proposition~\ref{prop:Eisen_Fourier}, \eqref{eq:F_v}, and Lemma~\ref{lem:Fp_int},
we obtain the following result.

\begin{proposition}\label{prop:Eisen_bounded}
  Let \(\mu\) be a positive even integer with \(\mu \ge n\) and \(\mu\neq n+1\).
  Then the Eisenstein series \(E_{n,\mu}(g)\) and \(\widetilde{E}_{n,\mu}(g)\) belong to
  \(\calA_n({\det}^\mu)(\bbQ)\), and the denominators of their Fourier coefficients
  are bounded.

  Moreover, for a prime \(p\) satisfying the following conditions,
  we have \(\widetilde{E}_{n,\mu}(g)\in\calA_n({\det}^\mu)(\bbZ_{(p)})\):
  \begin{itemize}
    \item \(p>\mu+1\);
    \item \(p\) does not divide \(D_K\);
    \item either \(\mu\ge 2n\), or
          \(z_{i,p}\in \calO_{K_p}^{\times}\) for all \(1\le i\le h\).
  \end{itemize}
\end{proposition}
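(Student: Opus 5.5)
The plan is to read off the Fourier coefficients at each representative $\gamma_i$ from Proposition~\ref{prop:Eisen_Fourier} together with the functional-equation form \eqref{eq:F_v}, and to control every factor separately. Since $E_{n,\mu}\in\calA_n({\det}^\mu)$ by Shimura's Proposition~17.7, only rationality and integrality need to be checked. Degenerate $S$ are handled first: for $S$ of rank $r<n$, the coefficient $A_{n,\mu}(\gamma_i,S)$ equals a coefficient of the Siegel $\Phi$-operator image, namely the rank-$r$ Eisenstein series $E_{r,\mu}$, evaluated at the reduced matrix (after conjugating by some $\bfm(\alpha)$ with $\alpha\in\GL_n(K)$, using Lemma~\ref{lem:Af}(3)). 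Here the usual inductive structure of Siegel/Hermitian Eisenstein series is taken as known. This reduces everything to the non-degenerate formula in lower rank, with normalising $L$-values $\prod_{i=0}^{r-1}L(1-\mu+i,\chi^i_{-D_K})^{-1}$. Multiplying by the full product $\prod_{i=0}^{n-1}L(1-\mu+i,\chi^i_{-D_K})$ for $\widetilde E_{n,\mu}$ then leaves the extra factors $\prod_{i=r}^{n-1}L(1-\mu+i,\chi_{-D_K}^i)$. For $S=0$ the constant term is an explicit product of such $L$-values.

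For non-degenerate $S$, rationality follows from three facts. First, $L(1-\mu+i,\chi^i)=-B_{\mu-i,\chi^i}/(\mu-i)\in\bbQ$. Second, $F_p(p^{\mu-2n};S[\bft(z_{i,p})])\in\bbQ$ (and lies in $\bbZ$ when $\mu\ge 2n$, by Lemma~\ref{lem:Fp_int}). Third, $|\det(z_{i,p}z_{i,p}^*)|_p^{n-\mu/2}$ is a power of $p$. Bounded denominators come from the same facts. Only finitely many $z_i$ occur, so the powers of $p$ from the $z_{i,p}$ are uniformly bounded. The $L$-value denominators are fixed. The $F_p$ values have denominators bounded by powers of primes dividing $D_K$ or the $z_i$: $F_p(p^{-n}X;\cdot)\in\bbZ[X]$, evaluated at $X=p^{\mu-n}$, is integral for $\mu\ge n$, whether we use the functional-equation form or the original $F_p(p^{-\mu};S)$ form. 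The factor $2^n$ and powers of $D_K$ and $\det S$ are harmless because $\mu\ge n$ and $S\in\Lambda_n$ has denominators dividing $D_K$.

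For the $\bbZ_{(p)}$-integrality of $\widetilde E_{n,\mu}$, I would multiply through and observe that in the non-degenerate case the $L$-values cancel exactly. What remains is $2^n$ times $\prod_{q}|\det(z_{i,q}z_{i,q}^*)|_q^{\mu/2}$ (from \eqref{eq:F_v}) times $\prod_{q\in\bfc}F_q(q^{\mu-2n};\cdot)$. The first two factors are $p$-integral if $z_{i,p}$ is a unit. Otherwise, when $\mu\ge 2n$, I would use the version in Proposition~\ref{prop:Eisen_Fourier} with exponent $n-\mu/2\le 0$, which again gives $p$-integrality. Lemma~\ref{lem:Fp_int} makes the $F_q$ factors integral at $q\ne p$. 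At $q=p$ with $p\nmid D_K$, the argument $S[\bft(z_{i,p})]$ is $p$-integral Hermitian, so Lemma~\ref{lem:Fp_int} applies; for $\mu\ge 2n$ this holds regardless of $z_i$. For degenerate $S$ the leftover factors are $L(1-\mu+i,\chi^i)$ with $i\ge r$. By von Staudt--Clausen and its generalized Bernoulli analogue (Carlitz), $B_{\mu-i,\chi}/(\mu-i)$ is $p$-integral once $p>\mu+1$ and $p\nmid D_K$: the conductor is prime to $p$ and $p-1\nmid \mu-i$ because $\mu-i\le\mu<p-1$.

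The main obstacle is the degenerate terms. I need the precise $\Phi$-operator relation, including the $z_i$-twists and the correct powers of $|\det|$ at the representatives, and I must check that the ratio of normalising $L$-products is indeed $p$-integral, with no stray Bernoulli denominators such as the $i=0$ term $\zeta(1-\mu)$. I would first verify this against Shimura's explicit constant-term formula for $E_{n,\mu}$ along $P_{n,r}$, and then apply Kummer/Carlitz congruences to each factor.
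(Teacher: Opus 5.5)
Your overall route matches the paper's. Its proof only reads off the coefficients from Proposition~\ref{prop:Eisen_Fourier}, \eqref{eq:F_v} and Lemma~\ref{lem:Fp_int}, and your handling of degenerate $S$ goes beyond what the paper writes: the leftover factors $\prod_{i\ge r}L(1-\mu+i,\chi_{-D_K}^i)$ are $p$-integral by von Staudt--Clausen and Carlitz.

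The step that fails is the branch where $\mu\ge 2n$ but $z_{i,p}$ is not a unit. Switching to the form of Proposition~\ref{prop:Eisen_Fourier} cannot help, because by the functional equation it is the same number as \eqref{eq:F_v}. You overlook two things:
\begin{itemize}
\item At $\gamma_i$ the support condition only makes $T=S[\bft_0(z_{i,p})]$ $p$-integral, not $S$ itself. So your remark that $S\in\Lambda_n$ makes $\det S$ harmless is false here.
\item $F_p(p^{-\mu};T)$ is $F_p(p^{-n}X;T)$ evaluated at $X=p^{n-\mu}$, not at $p^{\mu-n}$, so it carries negative powers of $p$.
\end{itemize}
Write $v=v_p(z_{i,p}\overline{z_{i,p}})$ and $d=v_p(\det T)$. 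The factor $(D_K^{\lfloor n/2\rfloor}\det S)^{\mu-n}F_p(p^{-\mu};T)$ then has valuation as low as $(\mu-n)(d-v)-(\mu-n)d=-(\mu-n)v$. Adding the gain $(\mu/2-n)v$ from $|z_{i,p}\overline{z_{i,p}}|_p^{n-\mu/2}$ gives back exactly the bound $-\tfrac{\mu}{2}v$ that is visible in \eqref{eq:F_v}.

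This bound is attained, so that branch cannot be proved as stated. Take $n=1$, $\mu\ge 4$, $N_i=\prod_q|z_{i,q}\overline{z_{i,q}}|_q^{-1}$ and $S=N_i^{-1}$. Then:
\begin{itemize}
\item $T$ is a unit at every $q$, so no unramified prime lies in $\bfc$.
\item At ramified $q$ the functional equation forces $F_q$ to have degree $0$, so $F_q=1$.
\item The coefficient of $\widetilde{E}_{1,\mu}$ at $(\gamma_i,S)$ is $2N_i^{-\mu/2}$, which has $p$-valuation $-\mu v/2<0$ when $v>0$, even though $\mu\ge 2n$.
\end{itemize}
The same scaling appears intrinsically: replacing $z_i$ by $z_ia$ with $a\in K^\times$ multiplies the coefficients by $(a\bar a)^{-\mu/2}$ and reindexes $S$ by $S\mapsto S[\bft_0(a)]$.

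What your argument does prove is $\bbZ_{(p)}$-integrality when $v_p(z_{i,p}\overline{z_{i,p}})\le 0$ for all $i$. For instance, representatives prime to $p$ satisfy this and can always be chosen. In that case $\mu\ge n$ already suffices for the non-degenerate coefficients, by \eqref{eq:F_v} and Lemma~\ref{lem:Fp_int}. The paper's one-line proof does not justify the $\mu\ge 2n$ alternative either.

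There is also a smaller slip of the same kind in your bounded-denominator paragraph. $F_p(p^{-\mu};S)$ on its own does not have bounded denominators; only the form \eqref{eq:F_v} gives that directly.
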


\begin{remark}\label{rem:Bernoulli}
  By the von Staudt-Clausen theorem, if \(p > \mu+1\), then
  \(p\) does not divide the denominator of \(B_{\mu-i}\) for any \(0 \le i \le n-1\).
  On the other hand, if a prime \(p\) does not divide \(D_K\), then
  the generalized Bernoulli number \(B_{m,\chi_D}\) is a \(p\)-integer for every integer \(m\)
  (see Leopoldt~\cite{VonLeopoldt1958Eine}).
\end{remark}

As before, we assume that the dominant integral weights \(\bsk\) and \(\bsl\)
satisfy \(\ell(\bsk)+\ell(\bsl) \le \mu\),
and we set
\(\ell(\bsk, \bsl) = \max\{\ell(\bsk), \ell(\bsl)\}\).
For an integer \(r \ge \ell(\bsk, \bsl)\),
we define
\(\rho_r = {\det}^{\mu/2} \rho_{r,\bsk}\boxtimes{\det}^{\mu/2}\rho_{r,\bsl}\) and
\(\rho'_r = {\det}^{\mu/2} \rho_{r,\bsl}\boxtimes{\det}^{\mu/2}\rho_{r,\bsk}\),
and set \(|\rho_r| = |\bsk| + |\bsl| + r\mu\).
For an element \(v \otimes v' \in V_{\rho_r} \otimes V_{\rho'_r}\),
we often omit the symbol \(\otimes\) and simply write \(vv' = v \otimes v'\).

In order to present the pullback formula for Eisenstein series in a convenient
form, we introduce the following notation.

We set
\begin{align*}
  \bbL_F(m,\mu)
   & =
  \prod_{i=0}^{m-1} L_F(1-\mu+i,\chi_{-D_K}^i), \\
  \bbL(s,f)
   & =D_K^{r/2}\cdot
  \prod_{j=1}^{r}
  \Gamma_\bbC\!\left(s+\tfrac{\mu}{2}+k_j-j+\tfrac12\right)
  \Gamma_\bbC\!\left(s+\tfrac{\mu}{2}+l_j+r-j-\tfrac12\right)
  \frac{L(s,f,\mathrm{St})}{(f,f)},             \\
  \calC_{n,\mu}(f)
   & =
  \frac{\bbL_F(n,\mu)}{\bbL_F(2r,\mu)}\,
  \bbL\bigl((\mu+1)/2-r,f\bigr),
\end{align*}
where \(\Gamma_\bbC(s)=2(2\pi)^{-s}\Gamma(s)\), and
\(f\in\calS_{r}^{\sharp}(\rho_r)\) is a Hecke cusp form.

We remark that, for
\[
  f=(f_1,\ldots,f_h)^\sharp \in \calA_r^{\sharp}(\rho_r),
  \qquad
  f_i(Z)
  =
  \sum_T a\bigl(f_i,T\bigr)\,
  \exp\!\bigl(2\pi\sqrt{-1}\,\tr(TZ)\bigr),
\]
a straightforward computation shows that one can write
\[
  f^\dagger
  =
  (f_1^\dagger,\ldots,f_h^\dagger)^\sharp
  \in
  \calA_r^{\sharp}(\rho'_r),
\]
where the Fourier coefficients are given by
\[
  a(f_i^\dagger,T)
  =
  (\sqrt{-1})^{|\rho_r|}
  \,\overline{a(f_i,T)}.
\]
This description relies on the assumption that the level is \(1\).
If the level is not \(1\), such a simple expression in terms of Fourier
coefficients is no longer available.

Let \(n_1\) and \(n_2\) be positive integers with \(n_1\ge n_2\), and put
\(n=n_1+n_2\).
We define
\[
  \calE_{\mu}(g_1,g_2)
  :=
  \calE^{(n_1,n_2)}_{\mu,\bsk,\bsl}(g_1,g_2)
  =
  (2\pi \sqrt{-1})^{-(|\bsk|+|\bsl|)}\,
  \dkl\,\widetilde{E}_{n_1+n_2,\mu}
  \bigl(\iota_{n_1,n_2}(g_1,g_2)\bigr).
\]

\begin{remark}
  The normalized Eisenstein series satisfies
  \[
    \widetilde{E}_{n,\mu}(g)
    =
    \bbL_F(n,\mu)\,E_{n,\mu}(g).
  \]
\end{remark}

We also define a constant \(M_r\) by
\[
  M_r
  :=
  \max\!\left(
  \left\{
  \mu+k_j-j-r+1,\;
  \mu+l_j-j
  \right\}_{1\le j\le r}
  \cup
  \left\{
  \mu,\;D_K,\;
  k_1+l_1+\mu-1
  \right\}
  \right).
\]

\begin{lemma}\label{lem:pullback}
  Let \(n_1\) and \(n_2\) be positive integers with \(n_1\ge n_2\), and put \(n=n_1+n_2\).
  For \(r\ge \ell(\bsk,\bsl)\), let \(\{f^{(r)}_d \mid d=1,\ldots,e_r\}\) be an orthogonal basis of
  \(\calS_{r}^{\sharp}(\rho_r)\) with respect to the Petersson inner product,
  consisting of Hecke cusp forms.
  Assume that \(\mu \ge n+2\).
  Then we have
  \[
    \calE_{\mu}(g_1,g_2)
    =
    \sum_{r=\ell(\bsk,\bsl)}^{n_2}
    c_r
    \sum_{d=1}^{e_r}
    \overline{\calC_{n,\mu}\bigl(f^{(r)}_d\bigr)}\,
    [f^{(r)}_d]_{\mu}^{n_1}(g_1)\,
    [f^{(r)\dagger}_d]_{\mu}^{n_2}(g_2),
  \]
  where \(c_r\) is a \(p\)-unit rational number for any prime \(p\) satisfying
  \(p>M_r\).
\end{lemma}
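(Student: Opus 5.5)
We will prove the lemma by the standard Garrett--Böcherer argument: expand \(\calE_\mu(g_1,\cdot)\) in the second variable against cusp forms and Klingen--Eisenstein series, and identify the coefficients through the pullback formula (Theorem~\ref{thm:pullback}) together with Proposition~\ref{prop:c}.

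First, by Condition~(A) for \(\dkl\) (Proposition~\ref{prop:diff}), for fixed \(g_1\) the function \(g_2\mapsto \calE_\mu(g_1,g_2)\) lies in \(\calA^\sharp_{n_2}(\rho'_{n_2})\), and symmetrically in \(g_1\). Since \(\mu\ge n+2\), the Eisenstein series converges absolutely at \(s=\mu/2\). Hence we can apply the Siegel \(\Phi\)-operator (constant terms along the maximal parabolics \(P_{n_2,r}\)) to the restricted series. The double coset decomposition \(P_{n,0}\backslash G_n/\iota(G_{n_1}\times G_{n_2})\) is indexed by \(r=0,\ldots,n_2\). After applying \(\dkl\), only the orbits of rank \(r\ge \ell(\bsk,\bsl)\) survive, because \(\dkl\) kills terms of lower rank by the weight condition \(\ell(\bsk)+\ell(\bsl)\le\mu\) and the \(\frakK\)-equivariance in Proposition~\ref{prop:diff}(b). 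This yields the decomposition
\[
\calE_\mu(g_1,g_2)=\sum_{r}\sum_{\gamma\in P_{n_2,r}\backslash G_{n_2}}(\text{rank-}r\text{ term})(g_1,\gamma g_2).
\]
We then use an induction on \(n_2\), comparing \(\Phi\)-images: applying \(\Phi^{n_2}_{r}\) to both sides reduces to the \((n_1,r)\) pullback. This shows that the rank-\(r\) piece is a Klingen-Eisenstein series in \(g_2\) lifted from cusp forms on \(\rmu_{r,r}\).

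Next, for each \(r\), we expand the cuspidal part \(h_r(g_1,\cdot)\in \calS^\sharp_r(\rho'_r)\) in the orthogonal basis \(\{f^{(r)\dagger}_d\}\). Its coefficient is \((h_r(g_1,\cdot),f^{(r)\dagger}_d)/(f^{(r)\dagger}_d,f^{(r)\dagger}_d)\). Theorem~\ref{thm:pullback}, applied with \((n_1,r)\) in place of \((n_1,n_2)\) and \(\chi\) trivial, computes this inner product as \(c(\mu/2,\rho'_r)\,D(\mu/2,f;\mathbf 1)\,[f]^{n_1}_\mu(g_1)\) up to conjugation. Here we use \((f^\dagger)^\dagger=\pm f\), with the explicit sign \((\sqrt{-1})^{|\rho_r|}\), and the Hecke-equivariance of \(\dagger\). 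After lifting each \(f^{(r)\dagger}_d\) to \([f^{(r)\dagger}_d]^{n_2}_\mu\), we obtain the displayed shape.

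The remaining work, and the main obstacle, is bookkeeping the constant \(c_r\). We must write
\[
\widetilde E\text{-normalization}\times(2\pi\sqrt{-1})^{-|\bsk|-|\bsl|}\times c(\mu/2,\rho'_r)\times D(\mu/2,f)
\]
as \(c_r\,\overline{\calC_{n,\mu}(f)}\). We do this in three steps:
\begin{itemize}
\item We use \(\widetilde E_{n,\mu}=\bbL_F(n,\mu)E_{n,\mu}\).
\item We express the denominator \(\prod_{i=0}^{2r-1}L(\mu-i,\chi_{-D_K}^i)\) of \(D\) via the functional equation as \(\bbL_F(2r,\mu)\) times powers of \(\pi\), \(D_K\), Gamma factors and \(2\).
\item We absorb the archimedean factors into the \(\Gamma_\bbC\) factors of \(\bbL((\mu+1)/2-r,f)\). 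The shift \(s-n+\tfrac12\) at \(s=\mu/2\), with \(n\) replaced by \(r\), gives exactly \((\mu+1)/2-r\).
\end{itemize}
After these steps, the leftover rational number is a product of factorials \((k_i+\ell-i)!\), the differences \(k_i-k_j+j-i\), the Pochhammer symbols \((k_i+l_i+\mu-i)_{(r)}\), Gamma-quotient ratios with arguments at most \(\mu+k_j-j-r+1\) and \(\mu+l_j-j\), and powers of \(2\) and \(D_K\). Every such factor has prime divisors bounded by \(M_r\), so \(c_r\) is a \(p\)-unit for \(p>M_r\). The delicate point is checking that the powers of \(\pi\) cancel exactly. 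We verify this by comparing with the \(n_2=r=\ell=0\) scalar case (Shimura's constant) as a sanity check.
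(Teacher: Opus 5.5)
Your route is the paper's: its proof invokes Theorem~\ref{thm:pullback} through the Garrett-type argument of Atobe et al., writes \(c_r\) explicitly, and reads off \(p\)-unitness from Proposition~\ref{prop:c} and Remark~\ref{rem:Bernoulli}. Using the functional equation so that the Bernoulli numerators in \(L_F(1-\mu+i,\chi_{-D_K}^i)/L_F(\mu-i,\chi_{-D_K}^i)\) cancel is a good way to handle the \(p\)-unit part.

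However, your claim that \(\mu\ge n+2\) gives absolute convergence at \(s=\mu/2\) is false. The Hermitian series converges for \(\Re(s)>n\), i.e.\ \(\mu>2n\); you have imported the Siegel bound \(k>n+1\). This range matters: in Theorem~\ref{thm:main} the lemma is used with total degree \(2n\) and only \(\mu>2n\), far below \(\mu>4n\). So neither the orbit decomposition for \(\iota(G_{n_1}\times G_{n_2})\) nor a termwise computation of \(\Phi\) is available at \(s=\mu/2\). The missing step is a continuation argument, in three parts:
\begin{itemize}
\item Continue the basic identity of Theorem~\ref{thm:pullback} from \(\Re(s)>n\) to \(s=\mu/2\). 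Use holomorphy of \(E_{n,\mu}\) there (Shimura's result quoted in Section~\ref{subsec:Eisenstein}) and the fact that \(D(\mu/2,f)\) is finite and nonzero, since its \(L\)-value sits at \((\mu+1)/2-r\ge 3/2\).
\item Conclude from this that the Klingen--Eisenstein series are regular at \(s=\mu/2\).
\item Run the \(\Phi\)-induction on holomorphic forms at \(s=\mu/2\), computing \(\Phi\) from Fourier coefficients. This also needs \(\Phi^{n_2-r}\bigl(\dkl^{(n_1,n_2)}\widetilde E_{n,\mu}\circ\iota_{n_1,n_2}\bigr)\) to equal the \((n_1,r)\) analogue exactly, since any constant there enters \(c_r\).
\end{itemize}
Also, the ranks \(r<\ell(\bsk,\bsl)\) drop out because \(\rho_{r,\bsk}\) or \(\rho_{r,\bsl}\) does not exist, not because of \(\ell(\bsk)+\ell(\bsl)\le\mu\).

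Second, rationality of \(c_r\) \emph{is} the cancellation of the powers of \(\pi\), and a check in a degenerate case cannot establish it. (\(n_2=0\) is excluded anyway, and small \(r\) is exactly where nothing can go wrong.) Count exponents in the explicit \(c_r\):
\begin{itemize}
\item \((2\pi\sqrt{-1})^{-(|\bsk|+|\bsl|)}\) contributes \(-(|\bsk|+|\bsl|)\);
\item \(c(\mu/2,\rho'_r)\) contributes \(r^2\) by Proposition~\ref{prop:c};
\item the \(L\)-ratios contribute \(-\sum_{i=0}^{2r-1}(\mu-i)=-(2r\mu-2r^2+r)\);
\item the factors \(\Gamma_\bbC(\mu+k_j-j-r+1)^{-1}\Gamma_\bbC(\mu+l_j-j)^{-1}\) contribute \(2r\mu+|\bsk|+|\bsl|-2r^2\).
\end{itemize}
The total is \(r^2-r\). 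It vanishes for \(r\le1\), matching \(c_1=2^{15}\cdot3^3/(11^2\cdot13^2)\) and \(c_1=-2^9\cdot3^{10}\) in Section~\ref{sec:example}, but not for \(r\ge2\).

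So with Proposition~\ref{prop:c} as stated, your bookkeeping does not close. You would have to show that the archimedean constant actually carries \(\pi^{r}\) rather than \(\pi^{r^2}\), or else move \(\pi^{r^2-r}\) into \(\calC_{n,\mu}\). Once rationality is settled, your prime-by-prime argument is fine. All factorials, Pochhammer symbols and \(\Gamma\)-values involve only primes \(\le M_r\). The \(D_K\)-powers, namely \(D_K^{-r/2}\) together with \(D_K^{\mu-i-1/2}\) from each odd \(i\), combine to the integral power \(D_K^{r\mu-r^2-r}\).
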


\begin{proof}
  This lemma is a direct corollary of Theorem~\ref{thm:pullback}, whose proof follows
  the same strategy as, for example, \cite[Theorem~5.8]{atobe2023harder}.
  The constant \(c_r\) is given explicitly by
  \begin{align*}
    c_r
    ={} &
    (2\pi \sqrt{-1})^{-(|\bsk|+|\bsl|)}\,
    c(\mu/2,\rho'_r)D_K^{-r/2} \\
        & \quad\cdot
    \frac{
      \prod_{i=0}^{2r-1}
      L_F(1-\mu+i,\chi_{-D_K}^i)\,
      L_F(\mu-i,\chi_{-D_K}^i)^{-1}
    }{
      \prod_{j=1}^{r}
      \Gamma_\bbC(\mu+k_j-j-r+1)\,
      \Gamma_\bbC(\mu+l_j-j)
    }.
  \end{align*}
  The \(p\)-unit property of \(c_r\) follows from Proposition~\ref{prop:c}
  and Remark~\ref{rem:Bernoulli}.
\end{proof}

We expand \(\calE_{\mu}(g_1,g_2)\) into a Fourier series with respect to the variables \(g_1\) and \(g_2\), and write
\begin{equation*}
  \calE_{\mu}(g_1,g_2)
  =\sum_{\substack{S_1\in \Her_{n_1}(K) \\ S_2\in \Her_{n_2}(K)}}
  \epsilon_{\mu,\bsk,\bsl}^{(n_1,n_2)}(g_1,g_2;S_1,S_2)
  =\sum_{\substack{S_1\in \Her_{n_1}(K) \\ S_2\in \Her_{n_2}(K)}}
  \epsilon_\mu(g_1,g_2;S_1,S_2).
\end{equation*}

Then we have
\begin{equation}\label{eq:epsilon_mu}
  \epsilon_\mu(g_1,g_2;S_1,S_2)
  =
  \sum_{r=\ell(\bsk,\bsl)}^{n_2}
  (\sqrt{-1})^{|\rho_r|}c_r
  \sum_{d=1}^{e_r}
  \overline{\calC_{n,\mu}\bigl(f^{(r)}_d\bigr)}\
  A_{[f^{(r)}_d]_{\mu}^{n_1}}(g_1;S_1)\,
  \overline{A_{[f^{(r)}_d]_{\mu}^{n_2}}(g_2;S_2)}.
\end{equation}

We put
\begin{align*}
  \calF_\mu(g_1,\gamma^{(n_2)};S_2)
   & =\calF^{(n_1,n_2)}_\mu(g_1,\gamma^{(n_2)};S_2)  \\
   & =(\sqrt{-1})^{-(|\bsk|+|\bsl|)}
  \sum_{S_1\in\Her_{n_1}(K)}
  \epsilon_\mu(g_1,\gamma^{(n_2)};S_1,S_2)           \\
   & =\sum_{r=\ell(\bsk,\bsl)}^{n_2}(-1)^{r\mu/2}c_r
  \sum_{d=1}^{e_r}\overline{\calC_{n,\mu}(f^{(r)}_d)}\cdot
  \overline{A_{[f^{(r)}_d]_{\mu}^{n_2}}(\gamma^{(n_2)},S_2)}
  [f^{(r)}_d]_{\mu}^{n_1}(g_1),                      \\
  \calG_\mu(\gamma^{(n_1)},g_2;S_1)
   & =\calG^{(n_1,n_2)}_\mu(\gamma^{(n_1)},g_2;S_1)  \\
   & =(\sqrt{-1})^{-(|\bsk|+|\bsl|)}
  \sum_{S_2\in\Her_{n_2}(K)}
  \epsilon_\mu(\gamma^{(n_1)},g_2;S_1,S_2)           \\
   & =(\sqrt{-1})^{-(|\bsk|+|\bsl|)}
  \sum_{r=\ell(\bsk,\bsl)}^{n_2}c_r
  \sum_{d=1}^{e_r}\overline{\calC_{n,\mu}(f^{(r)}_d)}\cdot
  A_{[f^{(r)}_d]_{\mu}^{n_1}}(\gamma^{(n_1)},S_1)
  [f^{(r)\dagger}_d]_{\mu}^{n_2}(g_2)
\end{align*}
for \(\gamma^{(n_i)} \in \frakG^{(n_i)}_0\) and \(S_i \in \Her_{n_i}(K)\) (\(i=1,2\)).

\begin{proposition}\label{prop:integrality_pullback}
  We have
  \[
    (\sqrt{-1})^{|\rho_{n_2}|}\,\epsilon_\mu(\gamma_{(n_1)},\gamma_{(n_2)};S_1,S_2)
    \in (V_{\rho_{n_1}}\otimes V_{\rho'_{n_2}})(\bbQ)
  \]
  for any Hermitian matrices \(S_i\in \Her_{n_i}(K)\) and
  \(\gamma_{(n_1)}\in \frakG^{(n_1)}_0\), \(\gamma_{(n_2)}\in \frakG^{(n_2)}_0\).

  More precisely,
  \[
    (\sqrt{-1})^{|\rho_{n_2}|}\,\epsilon_\mu(\gamma_{(n_1)},\gamma_{(n_2)};S_1,S_2)
    \in (V_{\rho_{n_1}}\otimes V_{\rho'_{n_2}})(\bbZ_{(p)}),
  \]
  if a prime \(p\) satisfies the following conditions:
  \begin{itemize}
    \item \(p>\mu+1\).
    \item \(p\) does not divide \(D_K\).
    \item The polynomial \(P_{\bsk,\bsl}^{(n_1,n_2)}(T)\) appearing in
          \eqref{eq:differential_operator} has coefficients in
          \((V_{\rho_{n_1}}\otimes V_{\rho'_{n_2}})(\bbZ_{(p)})\).
    \item Either \(\mu\ge 2n\), or
          \(z_{i,p}\in \calO_{K_p}^{\times}\) for all \(1\le i\le h\).
  \end{itemize}

  In particular, under the above assumptions, we have
  \begin{align*}
    \calF_\mu(g_1,\gamma_{(n_2)};S_2)
     & \in
    (\calA_{n_1}^{\sharp}(\rho_{n_1})\otimes
    V_{\rho'_{n_2}})(\bbZ_{(p)}), \\
    \calG_\mu(\gamma_{(n_1)},g_2;S_1)
     & \in
    (V_{\rho_{n_1}}\otimes
    \calA_{n_2}^{\sharp}(\rho'_{n_2}))(\bbZ_{(p)}).
  \end{align*}
\end{proposition}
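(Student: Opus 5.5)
The approach is to compute \(\epsilon_\mu(\gamma_{(n_1)},\gamma_{(n_2)};S_1,S_2)\) directly from the Fourier expansion of \(\widetilde{E}_{n,\mu}\), using Proposition~\ref{prop:Eisen_bounded}. We never pass through the spectral side of Lemma~\ref{lem:pullback}. The last two assertions then follow formally: by \eqref{eq:epsilon_mu}, \(\calF_\mu\) and \(\calG_\mu\) are automorphic forms in one variable. Their Fourier coefficients at the representatives \(\gamma^{(n_i)}\) are exactly the \(\epsilon_\mu\)'s, up to a power of \(\sqrt{-1}\), and this power is absorbed by the normalization \((\sqrt{-1})^{|\rho_{n_2}|}\), since \(|\bsk|+|\bsl|\) and \(|\rho_{n_2}|\) differ by \(n_2\mu\), which is even.

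\textbf{Step 1: the diagonal embedding.} For \(Z=\begin{pmatrix}Z_1&W\\ {}^t\!\overline{W}'&Z_2\end{pmatrix}\in\hus{n}\), the classical form attached to \(\widetilde{E}_{n,\mu}\) at the component \(\iota(\gamma_{(n_1)},\gamma_{(n_2)})\) has the expansion
\[
\sum_{T}a(T)\,\bfe(\tr(TZ)),\qquad T=\begin{pmatrix}T_1&R\\ R^*&T_2\end{pmatrix}.
\]
First I would check that \(\iota_{n_1,n_2}(\gamma_{(n_1)},\gamma_{(n_2)})\) is, up to \(\tilg^+_n(\bbQ)\) and \(\tilk_{n,0}\), one of the representatives \(\gamma_j^{(n)}\), or a diagonal element \(\bfm(\bft_0(z)\oplus\bft_0(z'))\). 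By Proposition~\ref{prop:Fourier} (with \(\alpha\) diagonal), its coefficients are then Fourier coefficients \(A_{\widetilde{E}_{n,\mu}}(\gamma_j,T')\). The factor \(\rho(\alpha)\) is scalar here, and the powers \(|\det(z\bar z)|\) in Proposition~\ref{prop:Eisen_Fourier} are units away from \(p\) under the fourth hypothesis. So by Proposition~\ref{prop:Eisen_bounded} these coefficients lie in \(\bbQ\), and in \(\bbZ_{(p)}\) under the listed conditions.

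\textbf{Step 2: the differential operator.} In classical terms, \(\dkl=P(\pi^+)\) acts at the point \(W=0\) as \(P\bigl(\tfrac{1}{2\pi\sqrt{-1}}\partial_W\bigr)\) applied to the holomorphic function. Applied to \(\bfe(\tr(TZ))\), it gives \((2\pi\sqrt{-1})^{\deg P}\,P(R)\) up to a fixed normalization coming from \(\frakc\). Here \(\deg P=|\bsk|+|\bsl|\) because of the homogeneity in Proposition~\ref{prop:diff}(b). The prefactor \((2\pi\sqrt{-1})^{-(|\bsk|+|\bsl|)}\) in \(\calE_\mu\) cancels this power exactly. Grouping by \((T_1,T_2)=(S_1,S_2)\), the coefficient \(\epsilon_\mu\) becomes the sum
\[
\sum_R a\!\left(\begin{pmatrix}S_1&R\\R^*&S_2\end{pmatrix}\right)P(R),
\]
a finite sum because the matrix must be semidefinite with entries in \(\frakd_K^{-1}\). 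This exhibits \(\epsilon_\mu\) as rational (respectively \(p\)-integral) provided \(P\) has rational (respectively \(\bbZ_{(p)}\)) coefficients; the latter is the third hypothesis.

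\textbf{Main obstacle.} The delicate point is the constant bookkeeping. This means tracking the power of \(\sqrt{-1}\) and of \(2\) coming from the Cayley transform \(\frakc\) in the definition of \(\pi^+\), the automorphy factor \(\rho(M(g_\infty))\) at \(\bfi_n\), and the entries of \(R\) lying in \(\frakd_K^{-1}\), which introduces \(\sqrt{-D_K}\). I would handle this by first verifying that the \(\sqrt{-1}\)-powers collect into exactly \((\sqrt{-1})^{-|\rho_{n_2}|}\) times a rational number. The powers of \(2\) and of \(D_K\) are \(p\)-units, because \(p>\mu+1\ge 3\) and \(p\nmid D_K\). The first (rational) assertion is the same argument with \(\bbZ_{(p)}\) replaced by \(\bbQ\), using only the rationality part of Proposition~\ref{prop:Eisen_bounded}.
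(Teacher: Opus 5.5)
Your proposal follows essentially the same route as the paper. The paper's proof observes that on each component \(2^{-|\bsk|-|\bsl|}\dkl\) acts as \(P^{(n_1,n_2)}_{\bsk,\bsl}(\partial_Z)\), which is homogeneous of degree \(|\bsk|+|\bsl|\). Hence \((2\pi\sqrt{-1})^{-(|\bsk|+|\bsl|)}\dkl\) multiplies each Fourier term by \(2^{|\bsk|+|\bsl|}\) times a value of \(P\), so it preserves rationality and \(p\)-integrality, and Proposition~\ref{prop:Eisen_bounded} finishes the argument.

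One step of yours is misstated. The operator is \(P(\partial_Z)\), with \(\partial_Z=(\partial/\partial Z_{ij})\) the full \(n\times n\) matrix of derivatives, and one restricts to \(W=0\) only after differentiating; it is not \(P(\partial_W)\). The polynomial \(P\) lives on all of \(\M_n\) and genuinely involves the diagonal blocks; for instance, the factor \(C\) in Section~\ref{sec:example} contains \(t_{ij}t_{33}\). So the coefficient is \(2^{|\bsk|+|\bsl|}\sum_R a(T)\,P({}^tT)\) with \(T=\begin{pmatrix}S_1&R\\ R^*&S_2\end{pmatrix}\), not \(\sum_R a(T)P(R)\). Every entry of \(T\) lies in \(\frakd_K^{-1}\), so your integrality reasoning survives this correction unchanged.

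Some smaller points:
\begin{itemize}
\item The Cayley transform contributes exactly \(2^{|\bsk|+|\bsl|}\) and no power of \(\sqrt{-1}\). After the \(2\pi\sqrt{-1}\) cancellation there is essentially no \(\sqrt{-1}\)-bookkeeping left. When \(|\bsk|+|\bsl|\) is odd, \(\epsilon_\mu\) vanishes by the symmetry \(\diag(I_{n_1},-I_{n_2})\).
\item In Step~1, the \(\alpha\in\GL_n(K)\) moving \(\iota_{n_1,n_2}(\gamma_{(n_1)},\gamma_{(n_2)})\) to a representative need not be diagonal. If \(\frakz,\frakz'\) are the ideals attached to \(z,z'\), one only has \(\frakz\oplus\frakz'\cong\calO_K\oplus\frakz\frakz'\). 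This is harmless for scalar weight. You can avoid it entirely by applying Proposition~\ref{prop:Eisen_Fourier}, which is stated for arbitrary \(\bfm(A)\), directly at \(A=\diag(\bft_0(z),\bft_0(z'))\). Your Step~1 does make explicit a point the paper's proof leaves implicit.
\item The \(\sqrt{-D_K}\) denominators are harmless for \(p\)-integrality since \(p\nmid D_K\). However, nothing in your argument (or the paper's) shows that they collect into powers of \(D_K\). As written, the argument yields values in \(V(K)\), \(p\)-integral at primes above \(p\).
\end{itemize}
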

\begin{proof}
  For any
  \(f=(f_1,\ldots,f_h)^\sharp\in\calA_n^\sharp({\det}^{\mu})\), we have
  \[
    2^{-|\bsk|-|\bsl|}\dkl f
    =
    \bigl(
    P_{\bsk,\bsl}^{(n_1,n_2)}(\partial_Z)f_1,\ldots,
    P_{\bsk,\bsl}^{(n_1,n_2)}(\partial_Z)f_h
    \bigr)^\sharp,
  \]
  where \(\partial_Z=(\partial/\partial Z_{ij})\).
  Noting that \(P_{\bsk,\bsl}^{(n_1,n_2)}\) is homogeneous of degree
  \(|\bsk|+|\bsl|\),
  this implies that the action of \((2\pi\sqrt{-1})^{-(|\bsk|+|\bsl|)}\dkl\)
  preserves the rationality properties of \(f\).
  (See also \cite[Proposition~6.6]{atobe2023harder}.)
\end{proof}

The following lemma is almost obvious from the definition.
\begin{lemma}\label{lem:neq0}
  Let \(F_1,\ldots,F_s\) be linearly independent elements of
  \(\calA^{\sharp}_n(\rho_n)\) consisting of Hecke eigenforms.
  Then there exist
  \[
    C=(C_v)\in \prod_{\va}\M_{\ell(\bsk_v),n}(\bbZ),\qquad
    D=(D_v)\in \prod_{\va}\M_{\ell(\bsl_v),n}(\bbZ),
  \]
  elements \(S_1,\dots,S_s \in \Her_n(K)\), and
  \(h_1,\dots,h_s \in G_n(\bbA_0)\)
  such that
  \[
    \Delta
    := \det\bigl( A_{F_i}(h_j,S_j)(C,D) \bigr)_{1\le i,j\le s}
    \neq 0.
  \]
\end{lemma}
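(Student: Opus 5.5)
The plan is to reduce the statement to a standard linear-algebra fact about linearly independent vectors, applied to suitably scalarized Fourier coefficients. Consider the map
\[
  \Phi:\ \calA^{\sharp}_n(\rho_n)\longrightarrow \prod_{(h,S,C,D)} \bbC,\qquad F\mapsto \bigl(A_F(h,S)(C,D)\bigr)_{(h,S,C,D)},
\]
where \(h\) runs over \(G_n(\bbA_0)\), \(S\) over \(\Her_n(K)\), and \((C,D)\) over the integral matrices in the statement. Here \(A_F(h,S)\in\bbC[U,V]_{(\bsk,\bsl)}\) is a polynomial in the variable matrices \(U,V\), and \(A_F(h,S)(C,D)\) denotes its evaluation at \(U=C\), \(V=D\). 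Once \(\Phi\) is shown to be injective, the images \(\Phi(F_1),\dots,\Phi(F_s)\) are linearly independent vectors in a product of copies of \(\bbC\).

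A finite family of linearly independent vectors always admits a nonzero \(s\times s\) minor with respect to some choice of \(s\) coordinates. This is the row-rank equals column-rank statement, which one can prove by induction on \(s\): select coordinates one at a time and expand the determinant along the last column. Choosing these \(s\) coordinates \((h_j,S_j,C_j,D_j)\) yields the required \(h_j,S_j\), but with possibly different \((C_j,D_j)\) for different \(j\).

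To obtain a single common pair \((C,D)\), I would argue as follows. Consider the determinant
\[
  \det\bigl(A_{F_i}(h_j,S_j)(U,V)\bigr)_{i,j}
\]
as a polynomial in the entries of \(U\) and \(V\). I would instead run the minor argument directly in the polynomial ring: treat each \(A_{F_i}(h,S)\) as an element of the field \(\bbC(U,V)\). Over this field, linear independence of the \(F_i\) still yields a nonzero minor \(\det\bigl(A_{F_i}(h_j,S_j)\bigr)\in\bbC[U,V]\), provided the coordinate vectors remain independent over \(\bbC(U,V)\).

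Independence over \(\bbC(U,V)\) follows because \(\bbC[U,V]_{(\bsk,\bsl)}\) has a \(\bbC\)-basis \(\{v_k\}\). Write \(A_{F_i}(h,S)=\sum_k a_{i,k}(h,S)\,v_k\). Any \(\bbC(U,V)\)-dependence among the \(F_i\), after clearing denominators and comparing coefficients, reduces to a \(\bbC\)-dependence of the coordinate vectors \((a_{i,k}(h,S))\). This requires a little care, but it is the standard argument.

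Since a nonzero polynomial with complex coefficients does not vanish identically on the Zariski-dense set of integer matrices, there exist \(C\in\prod\M_{\ell(\bsk_v),n}(\bbZ)\) and \(D\in\prod\M_{\ell(\bsl_v),n}(\bbZ)\) with \(\Delta\neq0\).

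The main obstacle is the injectivity of \(\Phi\). If \(A_F(h,S)=0\) for all \(h\in G_n(\bbA_0)\) and all \(S\), then in particular \(A_F(\gamma_j,S)=a(f_j,S)=0\) by Proposition~\ref{prop:Fourier}. Hence every component \(f_j\) vanishes, and therefore \(F=(f_1,\dots,f_h)^\sharp=0\). With injectivity in hand, the argument is complete. Note that the Hecke eigenform hypothesis is not needed; only linear independence is used.
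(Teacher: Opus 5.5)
There is a genuine gap at the step you flag as needing ``a little care''. You claim that a \(\bbC(U,V)\)-linear relation \(\sum_i r_i(U,V)\,A_{F_i}(h,S)=0\) reduces, after clearing denominators and comparing coefficients, to a \(\bbC\)-linear relation among the \(F_i\). For general families of polynomial-valued vectors this is false, because the products \(r_i v_k\) need not be linearly independent.

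For instance, let \(V\) be spanned by \(u_{11},u_{12}\), and put \(w_1=(\phi(t)u_{11})_t\), \(w_2=(\phi(t)u_{12})_t\) with \(\phi\not\equiv0\). Then \(w_1,w_2\) are \(\bbC\)-independent, yet \(u_{12}w_1-u_{11}w_2=0\). Moreover, every determinant \(\det\bigl(w_i(t_j)(C)\bigr)_{i,j}\) vanishes for every \(C\). So injectivity of \(\Phi\) and linear independence cannot by themselves produce a \emph{common} evaluation point \((C,D)\). That common point is the whole content of the lemma, and it needs structural input about Fourier coefficients of automorphic forms that your argument never uses.

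The missing ingredient is the \(\GL_n(K)\)-equivariance of the Fourier coefficients, combined with the irreducibility of \(V_{\rho_n}\). Take \(\alpha\in\GL_n(K)\) and \(h\in G_n(\bbA_0)\). Use left invariance under \(\bfm(\alpha)\in G_n(\bbQ)\) and Lemma~\ref{lem:Af}(3), and use Lemma~\ref{lem:eta} to strip off the archimedean component \(\bfm(\alpha_\infty)\). This gives
\[
  A_F(\bfm(\alpha_0)h,S)=c\,\rho_n\bigl(\alpha_\infty^{*-1},{}^t\alpha_\infty^{-1}\bigr)A_F(h,S[\alpha]),\qquad c\in\bbC^\times .
\]
Let \(W_F\) be the span of \(\{A_F(h,S)\}_{h\in G_n(\bbA_0),\,S}\). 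By the identity above, \(W_F\) is stable under the image of \(\GL_n(K)\), which is Zariski dense in \(\GL_n(\bbC)\times\GL_n(\bbC)\). Hence \(W_F\) is either \(0\) or \(V_{\rho_n}\), and your injectivity argument shows it is nonzero when \(F\neq0\).

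Now fix one \((C,D)\) at which evaluation is a nonzero functional on \(V_{\rho_n}\), for example \(C=(I_{\ell(\bsk)}\ 0)\), \(D=(I_{\ell(\bsl)}\ 0)\). Then \(F\mapsto\bigl(A_F(h,S)(C,D)\bigr)_{h,S}\) is injective on all of \(\calA^\sharp_n(\rho_n)\). Your minor-selection argument, applied to this scalar-valued map, then yields \(h_j,S_j\) with a single common \((C,D)\). This step is where the freedom \(h_j\in G_n(\bbA_0)\) gets used. Your remark that the Hecke-eigenform hypothesis is unnecessary is correct.
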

\begin{remark}
  The elements \(h_1,\ldots,h_s\) in the lemma might be chosen from \(\frakG_0\).
\end{remark}

\begin{theorem}\label{thm:integrality_automorphic}
  Let \(\mu\) be a positive even integer with \(\mu>2n\).
  If \(F\) is a Hecke eigenform in \(\calA_n^{\sharp}(\rho_n)\), then
  \[
    F\in \calA_n^{\sharp}(\rho_n)(\bbZ) \otimes_{\bbZ} \bbC.
  \]
  In particular,
  \[
    \calA_n^{\sharp}(\rho_n)= \calA_n^{\sharp}(\rho_n)(\bbZ) \otimes_{\bbZ} \bbC.
  \]
\end{theorem}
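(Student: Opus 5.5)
The plan is to follow the Garrett/Böcherer-type argument used for Siegel modular forms (cf.\ \cite{atobe2023harder}): realize every Hecke eigenform as a "Fourier coefficient" of the pulled-back Eisenstein series, which is rational by Proposition~\ref{prop:integrality_pullback}. We apply the pullback formula with \(n_1=n_2=n\), so the ambient group is \(\rmu_{2n,2n}\) and the hypothesis \(\mu>2n\) gives \(\mu\ge 2n+2\) (as \(\mu\) is even). This is what Lemma~\ref{lem:pullback} requires, and it also puts us in the range of Proposition~\ref{prop:Eisen_bounded}. First I would reduce to eigenforms. The space \(\calA_n^\sharp(\rho_n)\) decomposes, via Siegel \(\Phi\)-operators and the Klingen-Eisenstein series, as the direct sum of \(\calS_n^\sharp(\rho_n)\) and the spaces \([\calS_r^\sharp(\rho_r)]_\mu^n\) for \(\ell(\bsk,\bsl)\le r<n\). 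This is available because \(\mu>2n\) guarantees convergence and injectivity of \(f\mapsto[f]^n_\mu\). Hence \(\calA_n^\sharp(\rho_n)\) has a basis of Hecke eigenforms of the form \([f^{(r)}_d]_\mu^n\), where \(f^{(r)}_d\) runs through an orthogonal basis of Hecke cusp forms for \(r\le n\) (the term \(r=n\) being the cusp forms themselves). Since the \(\bbZ\)-structure is defined through bounded-denominator Fourier coefficients, \(\calA^\sharp_n(\rho_n)(\bbZ)\otimes\bbC=\calA^\sharp_n(\rho_n)(\bbQ)\otimes\bbC\) inside the space, so it suffices to show that each such eigenform lies in the \(\bbQ\)-span tensored with \(\bbC\).

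Next I use Lemma~\ref{lem:pullback} and the function \(\calF_\mu\). Fix \(\gamma\in\frakG_0^{(n)}\) and \(S_2\in\Her_n(K)\). Then \(\calF_\mu(g_1,\gamma;S_2)\) is a \(V_{\rho'_n}\)-valued automorphic form in \(g_1\). By Proposition~\ref{prop:integrality_pullback} it lies in \((\calA_n^\sharp(\rho_n)\otimes V_{\rho'_n})(\bbQ)\), and it equals
\[
\sum_{r}\sum_{d}(-1)^{r\mu/2}c_r\,\overline{\calC_{2n,\mu}(f^{(r)}_d)}\;\overline{A_{[f^{(r)}_d]_\mu^{n}}(\gamma,S_2)}\,[f^{(r)}_d]_\mu^{n}(g_1).
\]
Pairing the \(V_{\rho'_n}\) factor with rational vectors (evaluating the bideterminant polynomial at integral \(C,D\)) produces elements of \(\calA_n^\sharp(\rho_n)(\bbQ)\). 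These elements are linear combinations of the eigenforms \(F_i=[f^{(r)}_d]_\mu^n\), with coefficients \(\overline{\calC_{2n,\mu}(f_i)}\,\overline{A_{F_i}(h_j,S_j)(C,D)}\) up to nonzero constants \(c_r\).

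Now apply Lemma~\ref{lem:neq0} to the (linearly independent) family \(F_1,\dots,F_s\). It gives \(S_j\), \(h_j\in\frakG_0\) and \(C,D\) for which the matrix \(\bigl(\overline{A_{F_i}(h_j,S_j)(C,D)}\bigr)\) is invertible. Inverting this matrix expresses each \(\overline{\calC_{2n,\mu}(f_i)}\,F_i\) as a \(\bbC\)-linear combination of the rational forms \(\calF_\mu(\cdot,h_j;S_j)(C,D)\). Hence \(F_i\in\calA_n^\sharp(\rho_n)(\bbQ)\otimes\bbC\), provided \(\calC_{2n,\mu}(f_i)\neq0\). Since the eigenforms span the whole space, the second assertion follows as well.

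The main obstacle is this nonvanishing. It amounts to showing that \(L((\mu+1)/2-r,f,\mathrm{St})\neq0\) and that the Bernoulli-type factors in \(\bbL_F\) are nonzero. For \(\mu>2n\) the point \(s=(\mu+1)/2-r\) lies in the region of absolute convergence of the Euler product of the standard \(L\)-function, since \(s-\tfrac12+\ldots\) exceeds the abscissa \(r+1\). I would verify this by comparing with the normalization in \(D(s,f;\eta)\), where \(s=\mu/2>n\) and the \(L\)-function is evaluated at \(s-n+\tfrac12\) for the \(\rmu_{r,r}\) piece. An absolutely convergent Euler product cannot vanish. The values \(L(1-\mu+i,\chi^i)\) are nonzero because \(\mu-i\) has the correct parity relative to \(\chi^i_{-D_K}(-1)=(-1)^i\). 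A secondary point to check is that the \(\gamma\)- and \(S\)-dependence in \(\calF_\mu\) really covers the coefficients used in Lemma~\ref{lem:neq0}; for this I take \(h_j\in\frakG_0\), as the remark after that lemma permits.
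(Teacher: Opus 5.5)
Your proposal is correct in outline and follows the paper's proof essentially step for step: doubling with \(n_1=n_2=n\) (so \(\mu\ge 2n+2\)), the eigenbasis \(\{[f^{(r)}_d]^n_\mu\}\), the rational forms \(\calF_\mu(g_1,h_j;S_j)(C,D)\), Lemma~\ref{lem:neq0} with \(h_j\in\frakG_0\), Cramer's rule, and nonvanishing of the coefficients \(b_i\). The one weak spot is your justification that \(L((\mu+1)/2-r,f,\mathrm{St})\neq0\): the doubling Eisenstein series on \(\rmu_{2n,2n}\) is absolutely convergent for \(\Re(s)>2n\), i.e.\ \(\mu>4n\), not for \(\mu/2>n\), and for \(\mu=2n+2\), \(r=n\) the point \((\mu+1)/2-r=3/2\) lies below the abscissa \(r+1\) you quote, so for \(2n<\mu\le 4n\) your comparison does not work and this step rests, just as in the paper (which simply asserts it), on the bare claim that this value lies in the region of absolute convergence.
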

\begin{proof}
  This proof is based on an idea suggested by Professor Katsurada.
  Let \(\iota_{n,n} \colon G_{n} \times G_{n} \to G_{2n}\) be the diagonal embedding.
  Using Lemma~\ref{lem:Af}, we expand \(\calE_{\mu}(g_1,g_2)\)
  as a Fourier series with respect to \(g_2\), and write
  \[
    \calE_\mu(g_1,\gamma_j)
    =
    \sum_{S\in{\Lambda_n}_{\ge0}} \calF_\mu(g_1,\gamma_j;S).
  \]

  Since Proposition~\ref{prop:Eisen_bounded} holds and \((2\pi \sqrt{-1})^{-(|\bsk|+|\bsl|)}\dkl\) preserves rationality, it follows that
  \[
    \calF_\mu(g_1,\gamma_j;S)(U_2,V_2)
    \in
    \calA^{\sharp}_{n}(\rho_{n})(\bbZ)\otimes_{\bbZ}\bbQ,
  \]
  when viewed as a function of \(g_1\).
  Although \(\calF_\mu\) may be regarded as a polynomial in \(U_1,V_1,U_2,V_2\),
  we write \(\calF_\mu\) to emphasize that it is a polynomial in \(U_2,V_2\).

  With the notation of Lemma~\ref{lem:pullback},
  set
  \[
    \{F_1,\ldots,F_s\}
    =
    \left\{
    [f^{(r)}_d]^n_\mu
    \mid
    \ell(\bsk,\bsl)\le r\le n,\
    1\le d\le e_r
    \right\}.
  \]
  This is a basis of \(\calA_n^\sharp(\rho_n)\) consisting of Hecke eigenforms.
  Applying Lemma~\ref{lem:neq0} to this basis,
  we take
  \(C\), \(D\), \(S_j\), and \(h_j\) such that \(\Delta\ne0\).

  For \(1\le i\le s\), write \(F_i=[f_{d_i}^{(r_i)}]_\mu^n\)
  and put \(b_i:=(-1)^{r_i\mu/2}c_{r_i}\,\overline{\calC_{2n,\mu}\bigl(f_{d_i}^{(r_i)}\bigr)}\).
  The pullback formula gives
  \[
    \calF_\mu(g_1,h_j;S_j)(C,D)
    =
    \sum_{i=1}^{s}
    b_i\,
    \overline{
      A_{F_i}(h_j,S_j)(C,D)
    }\,
    F_i(g_1).
  \]
  Since
  \[
    \det\left(
    \overline{
      A_{F_i}(h_j,S_j)(C,D)
    }
    \right)_{1\le j,i\le s}
    =
    \overline{\Delta}
    \ne0,
  \]
  we have
  \begin{align*}
    b_1F_1(g_1)
     & =
    \overline{\Delta}^{-1}
    \begin{vmatrix}
      \calF_\mu(g_1,h_1;S_1)(C,D) & \overline{A_{F_2}(h_1,S_1)(C,D)} & \cdots & \overline{A_{F_s}(h_1,S_1)(C,D)} \\
      \vdots                      & \vdots                           & \ddots & \vdots                           \\
      \calF_\mu(g_1,h_s;S_s)(C,D) & \overline{A_{F_2}(h_s,S_s)(C,D)} & \cdots & \overline{A_{F_s}(h_s,S_s)(C,D)}
    \end{vmatrix}.
  \end{align*}
  For \(\mu>2n\), the standard \(L\)-value occurring in
  \(\calC_{2n,\mu}(f_{d_i}^{(r_i)})\) lies in its region of absolute
  convergence, while
  \(L_F(1-\mu+j,\chi_{-D_K}^{\,j})\ne0\) for \(0\le j\le2r_i-1\),
  because
  \(\chi_{-D_K}^{\,j}(-1)=(-1)^{\mu-j}\).
  Proposition~\ref{prop:c} also gives \(c_{r_i}\neq0\).
  Hence \(b_i\neq0\) for every \(i\), and in
  particular
  \[
    F_1
    \in
    \calA_n^{\sharp}(\rho_n)(\bbZ)
    \otimes_{\bbZ}\bbC.
  \]

  Since the Hecke eigenforms \(F_1,\ldots,F_s\) span \(\calA_n^{\sharp}(\rho_n)\), the second assertion follows.
\end{proof}

For a Hecke eigenform \(f\) and \(T \in \calH_n\),
we denote by \(\lambda_f(T)\) the Hecke eigenvalue of \(T\) with respect to \(f\), that is,
\[
  T \cdot f = \lambda_f(T)\, f.
\]
We define the field \(\bbQ(f)\) generated over \(\bbQ\) by all Hecke eigenvalues of \(f\) by
\[
  \bbQ(f) = \bbQ(\{\lambda_f(T) \mid T \in \calH_n\}),
\]
and call it the \emph{Hecke field} of \(f\).
It is well known that \(\bbQ(f)\) is a totally real field or a CM field.

For \(f \in \mathcal A_r^{\sharp}(\rho_r)\) and \(\sigma \in \Aut(\bbC)\), we define
\(f^\sigma := (f_1^\sigma,\ldots,f_h^\sigma)^\sharp\),
where each component is given by
\[
  f_i^\sigma(Z)
  =
  \sum_{T \in {\Lambda_r}_{\ge 0}}
  \sigma(a(f_i,T))
  \exp(2\pi \sqrt{-1}\,\tr(TZ)).
\]
Here, \(\sigma\) acts on \(\bbC[U,V]_{(\bsk,\bsl)}\) by applying \(\sigma\) to each
coefficient.

The rationality of the spaces of automorphic forms and of
Hermitian Klingen-Eisenstein series can be obtained by the same method as that in Appendix~A of Mizumoto~\cite{mizumoto1991poles}.

\begin{proposition}\label{prop:rationality}
  Let \(n \ge r\) be positive integers such that \(\mu \ge n+r+2\).
  Let \(0 \ne f \in \calS^{\sharp}_r(\rho_r)(\bbQ(f))\) be a Hecke cusp form.
  Then the following assertions hold:
  \begin{enumerate}
    \item There exists an orthogonal basis \(\{f_1,\ldots,f_d\}\) of
          \(\calS^{\sharp}_{r}(\rho_r)\)
          consisting of Hecke cusp forms such that
          \(f_j \in \calS^{\sharp}_{r}(\rho_r)(\bbQ(f_j))\) for each \(j\).
    \item We have
          \[
            \left(\calC_{n,\mu}(f)\right)^\sigma
            = \calC_{n,\mu}(f^\sigma)
            \quad \text{for all } \sigma \in \Aut(\bbC).
          \]
          In particular, \[\calC_{n,\mu}(f) \in \bbQ(f).\]
    \item We have
          \[
            ([f]_\mu^n)^\sigma = [f^\sigma]_\mu^n
            \quad \text{for all } \sigma \in \Aut(\bbC).
          \]
          In particular, \[[f]_\mu^n \in \calA^{\sharp}_n(\rho_n)(\bbQ(f)).\]
  \end{enumerate}
\end{proposition}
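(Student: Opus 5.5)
We follow Mizumoto's Appendix~A, using the pullback formula (Lemma~\ref{lem:pullback}) together with the rationality of \(\calE_\mu\) (Proposition~\ref{prop:integrality_pullback}) and the \(\Aut(\bbC)\)-stability of the cusp space coming from Theorem~\ref{thm:integrality_automorphic}. Throughout we use the pair \((n_1,n_2)=(n,r)\). The hypothesis \(\mu\ge n+r+2\) is exactly \(\mu\ge n_1+n_2+2\), which Lemma~\ref{lem:pullback} requires.

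\textbf{Part (1).} By Theorem~\ref{thm:integrality_automorphic}, applied with \(r\) in place of \(n\) (this needs \(\mu>2r\), which holds), the space \(\calS^\sharp_r(\rho_r)\) has a \(\bbQ\)-structure. Since the Fourier coefficient formulas \eqref{eq:hecke_inert_tp}, \eqref{eq:hecke_inert_tpi}, \eqref{eq:hecke_ramified_tpi}, \eqref{eq:hecke_split_tpi} have rational coefficients, the Hecke operators preserve this \(\bbQ\)-structure. They also commute with \(\sigma\in\Aut(\bbC)\) acting on Fourier coefficients, and \(\calS^\sharp_r(\rho_r)^\sigma=\calS^\sharp_r(\rho_r)\) because the space is spanned by rational forms. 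Each Hecke eigenform is determined up to scalar by its system of eigenvalues, and these systems occur with multiplicity one, which we take from multiplicity one for the Hecke algebra acting on \(\calS^\sharp_r(\rho_r)\); orthogonality then follows from self-adjointness. Given this, the eigenspace for \(\lambda_{f_j}\) is the kernel of \(\bbQ(f_j)\)-rational operators and so contains a nonzero vector with coefficients in \(\bbQ(f_j)\). Normalizing \(f_j\) to be that vector gives (1). If multiplicity one is not available, we instead choose an orthogonal eigenbasis inside each eigenspace, each of which is defined over \(\bbQ(f_j)\).

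\textbf{Parts (2) and (3).} Use Lemma~\ref{lem:pullback} with \((n_1,n_2)=(n,r)\) and the basis from (1), and fix \(g_2=\gamma\in\frakG^{(r)}_0\) and \(S_2\in\Her_r(K)\). We argue by induction on \(r\); when \(r=\ell(\bsk,\bsl)\) no lower terms occur. The terms with \(r'<r\) have \(\sigma\)-equivariance by the induction hypothesis, so we subtract them. What remains is the top-degree part, whose \(S_2\)-Fourier coefficient along \(g_2\) is
\[\sum_d c_r\,\overline{\calC_{n+r,\mu}(f_d)}\;\overline{a(f_d^\dagger,S_2)}\;[f_d]^n_\mu(g_1).\]
Here \([f_d]^r_\mu=f_d\) in degree \(r\), and the left-hand side is \(\sigma\)-equivariant by Proposition~\ref{prop:integrality_pullback}. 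Apply the Siegel \(\Phi\)-operator to degree \(r\) in \(g_1\) (the constant term along \(P_{n,r}\)), which sends \([f_d]^n_\mu\) to \(f_d\). This yields \(\sum_d c_r\overline{\calC(f_d)}\,\overline{a(f_d^\dagger,S_2)}f_d\). Comparing \(f_d\)-components, using Galois equivariance of the Hecke eigensystems, gives \(\calC_{n+r,\mu}(f_d)^\sigma=\calC_{n+r,\mu}(f_d^\sigma)\). The needed relation between \(\overline{a(f^\dagger,\cdot)}\) and \(a(f,\cdot)\) is given just before Lemma~\ref{lem:pullback}, and \(c_r\in\bbQ\).

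The ratio \(\calC_{n,\mu}/\calC_{n+r,\mu}=\bbL_F(n,\mu)/\bbL_F(n+r,\mu)\) is a product of rational Bernoulli numbers, so the same equivariance holds for \(\calC_{n,\mu}\). Now that the coefficients are known to be \(\sigma\)-equivariant and nonzero, linear independence of the distinct eigenforms \([f_d]^n_\mu\) in the top-degree identity gives \(([f]^n_\mu)^\sigma=[f^\sigma]^n_\mu\). The conclusion \([f]^n_\mu\in\calA^\sharp_n(\rho_n)(\bbQ(f))\) then follows because \(\sigma\) fixing \(\bbQ(f)\) fixes \(f\) by (1).

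\textbf{Main obstacle.} The delicate step is isolating the top-degree terms and separating the individual \(f_d\)'s. We need \(\Phi\) to commute with \(\sigma\) and to kill the lower \(r'\) terms appropriately, and we need enough \((\gamma,S_2)\) with \(a(f_d,S_2)\neq0\), as in Lemma~\ref{lem:neq0}. We also need \(\calC\ne0\), which uses \(\mu>2r\) and absolute convergence exactly as in the proof of Theorem~\ref{thm:integrality_automorphic}.
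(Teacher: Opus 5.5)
\textbf{Parts (2) and (3).} Your argument here is the Mizumoto-style argument the paper points to: induction on \(r\), the pullback identity for \((n_1,n_2)=(n,r)\) (admissible exactly because \(\mu\ge n+r+2\)), removing the \(r'<r\) terms by the induction hypothesis, applying \(\Phi^{n-r}\) to reach degree \(r\), and then solving for \(([f]^n_\mu)^\sigma\). Two small corrections are needed.
\begin{enumerate}
\item The \(g_2\)-coefficient is \(a(f_d^\dagger,S_2)=(\sqrt{-1})^{|\rho_r|}\overline{a(f_d,S_2)}\), not its conjugate. Passing \(\sigma\) through complex conjugation uses that \(\bbQ(f_d)\) is totally real or CM.
\item In the last step you need the linear independence of the \(f_d\) as functions of \((\gamma,S_2)\), with \((\gamma_j,S_j)\) chosen as in Lemma~\ref{lem:neq0} in degree \(r\). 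Independence of the \([f_d]^n_\mu\) is not what is used, because the unknowns \(([f_d]^n_\mu)^\sigma-[f_d^\sigma]^n_\mu\) are not known to be eigenforms.
\end{enumerate}

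\textbf{The gap is in (1).} It also affects your step ``comparing \(f_d\)-components'' in (2), which needs an orthogonal eigenbasis that \(\Aut(\bbC)\) permutes.
\begin{itemize}
\item Multiplicity one for Hecke eigensystems on \(\calS^\sharp_r(\rho_r)\) is neither proved nor assumed in the paper, and you give no source for it.
\item Your fallback assumes what has to be shown. An eigenspace \(W_\lambda\) does have a \(\bbQ(\lambda)\)-rational basis. But Gram--Schmidt stays \(\bbQ(\lambda)\)-rational only if \((h,h')/(h',h')\in\bbQ(\lambda)\) for rational \(h,h'\in W_\lambda\).
\item Likewise, \(\sigma\) carries an orthogonal basis of \(W_\lambda\) to an orthogonal basis of \(W_{\lambda^\sigma}\) only if Petersson products transform equivariantly. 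Nothing you use relates \((h^\sigma,h'^\sigma)\) to \(\sigma((h,h'))\).
\end{itemize}

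\textbf{How to close it: use the pullback identity you already have.}
\begin{enumerate}
\item Write \(\calC_{n+r,\mu}(f)=\Lambda_\lambda/(f,f)\), where \(\Lambda_\lambda\neq0\) depends only on the eigensystem \(\lambda\) of \(f\).
\item After subtracting the lower terms and applying \(\Phi^{n-r}\), your degree-\(r\) identity reads \(\sum_\lambda c_r\overline{\Lambda_\lambda}K_\lambda\). Here \(K_\lambda=\sum_f f(g_1)\overline{A_f(\gamma,S_2)}/(f,f)\), summed over any orthogonal basis of \(W_\lambda\), is the basis-independent reproducing kernel of \(W_\lambda\).
\item The projection onto \(W_\lambda\) is a polynomial in Hecke operators acting in \(g_1\). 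Its \(\sigma\)-conjugate is the projection onto \(W_{\lambda^\sigma}\). Hence \((\overline{\Lambda_\lambda}K_\lambda)^\sigma=\overline{\Lambda_{\lambda^\sigma}}K_{\lambda^\sigma}\).
\item Expand in a \(\bbQ(\lambda)\)-rational basis \(h_1,\dots,h_m\) of \(W_\lambda\): \(K_\lambda=\sum_{i,j}M_{ij}\,h_i(g_1)\overline{A_{h_j}(\gamma,S_2)}\) with \(M=({}^tG)^{-1}\) and \(G=((h_i,h_j))_{i,j}\). It follows that \(G/\overline{\Lambda_\lambda}\) has entries in \(\bbQ(\lambda)\) and transforms equivariantly under \(\sigma\).
\item Gram--Schmidt over \(\bbQ(\lambda)\) now gives a rational orthogonal basis of \(W_\lambda\). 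Transporting it along the Galois orbit gives an \(\Aut(\bbC)\)-permuted basis, well defined because \(\Aut(\bbC/\bbQ(\lambda))\) fixes it.
\item The same computation gives (2) for every rational eigenform. Your argument for (3) then goes through \(\lambda\)-component by \(\lambda\)-component.
\end{enumerate}
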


\begin{corollary}
  Let \(n\) be a positive integer, and let \((\bsk,\bsl)\) be dominant integral
  weights such that \(k_n+ l_n \ge 2n\).
  For any Hecke cusp form
  \(f \in \calS^{\sharp}_{n}\bigl(\rho_{n,(\bsk,\bsl)}\bigr)\)
  and any integer \(s_0\) satisfying
  \(1 \le s_0 \le (k_n+l_n)/2 - n\), we have
  \[
    \frac{L\bigl(s_0 + \frac12,\, f,\, \mathrm{St}\bigr)}
    {D_K^{n/2}\cdot \pi^{|\bsk| + |\bsl| + 2n(s_0+1/2)-n}\cdot (f,f)}
    \in \bbQ(f).
  \]
\end{corollary}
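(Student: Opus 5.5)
The plan is to deduce the corollary from Proposition~\ref{prop:rationality}(2) in the special case where the Klingen--Eisenstein degree equals the degree of the cusp form, with \(r=n\) in the notation there. In that case the ratio \(\bbL_F(n,\mu)/\bbL_F(2r,\mu)\) is a product of values \(L_F(1-\mu+i,\chi_{-D_K}^i)\), hence a rational number. It is nonzero because of the parity condition \(\chi_{-D_K}^i(-1)=(-1)^{\mu-i}\) already used in the proof of Theorem~\ref{thm:integrality_automorphic}. So \(\calC_{n,\mu}(f)\in\bbQ(f)\) is equivalent to \(\bbL((\mu+1)/2-n,f)\in\bbQ(f)\). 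I will first replace \(f\) by a nonzero multiple lying in \(\calS^\sharp_n(\rho)(\bbQ(f))\), which exists by Proposition~\ref{prop:rationality}(1). The statement of the corollary should be read with this normalization, since rescaling \(f\) by \(c\) multiplies \((f,f)\) by \(|c|^2\).

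\textbf{Step 1: choice of \(\mu\) and splitting of the weight.} Given \(s_0\), put \(\mu=2s_0+2n\). Then \(\mu\) is even, \((\mu+1)/2-n=s_0+\tfrac12\), and \(\mu\ge 2n+2=n+r+2\) because \(s_0\ge1\). The hypothesis \(s_0\le (k_n+l_n)/2-n\) gives \(\mu\le k_n+l_n\). Choose non-negative integers \(\kappa\le k_n\) and \(\nu\le l_n\) with \(\kappa+\nu=\mu\), and write \(\bsk=\kappa+\bsk'\), \(\bsl=\nu+\bsl'\), where \(\bsk',\bsl'\) are dominant of length \(\le n\). Then \(\rho_{n,(\bsk,\bsl)}={\det}^\kappa\rho_{n,\bsk'}\boxtimes{\det}^\nu\rho_{n,\bsl'}\), and \(\ell(\bsk')+\ell(\bsl')\le 2n\le\mu\). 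If \(k_n,l_n\ge\mu/2\) we can take \(\kappa=\nu=\mu/2\), which is exactly the setting of Section~\ref{sec:integrality}. Otherwise I will rerun the argument with general \((\kappa,\nu)\), relying on the remarks after Proposition~\ref{prop:diff} and Theorem~\ref{thm:pullback} that everything depends only on \(\kappa+\nu=\mu\). This requires the Eisenstein series \(E_{2n,\kappa,\nu}\), with trivial \(\chi\) replaced by a character as in \eqref{eq:infty_character}, to have rational Fourier coefficients, which follows from Shimura's formula in the same way as Proposition~\ref{prop:Eisen_bounded}.

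\textbf{Step 2: unwinding \(\bbL\).} Apply Proposition~\ref{prop:rationality}(2) to get \(\bbL(s_0+\tfrac12,f)\in\bbQ(f)\). At \(s=s_0+\tfrac12\), every argument of \(\Gamma_\bbC\) in the definition of \(\bbL\), namely \(s+\mu/2+k'_j-j+\tfrac12\) and \(s+\mu/2+l'_j+n-j-\tfrac12\), is a positive integer \(m\). Hence each factor \(\Gamma_\bbC(m)=2(2\pi)^{-m}(m-1)!\) is a nonzero rational number times \(\pi^{-m}\). Summing the exponents over \(j=1,\dots,n\) and rewriting in terms of the original weights \(k_j=\kappa+k'_j\), \(l_j=\nu+l'_j\), the total power of \(\pi\) should come out to \(-\bigl(|\bsk|+|\bsl|+2n(s_0+\tfrac12)-n\bigr)\). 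Dividing by this rational multiple of a power of \(\pi\) together with \(D_K^{n/2}\) then gives the claim.

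\textbf{Main obstacle.} I expect two points to need the most care. The first is the exponent bookkeeping in Step~2: the shifts by \(\kappa,\nu\) and the terms \(-j\) and \(n-j\) must combine into exactly the stated exponent. The second, and more substantive, is the \(\kappa\ne\nu\) case of Proposition~\ref{prop:rationality}, which requires checking that Shimura's Fourier coefficients and the constants \(c_r\) of Lemma~\ref{lem:pullback} stay rational up to the same power of \(\pi\sqrt{-1}\). I would first try to avoid this case altogether by twisting \(f\) by a suitable power of the determinant character (unitary Hecke character twist) to reach \(\kappa=\nu\), and fall back on redoing Shimura's computation only if that twist does not preserve level one.
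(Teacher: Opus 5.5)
Your main line is the paper's own, left implicit there: apply Proposition~\ref{prop:rationality}(2) with \(r=n\) and \(\mu=2s_0+2n\). Use that \(\bbL_F(n,\mu)/\bbL_F(2n,\mu)\) is a nonzero rational number, and then strip off the \(\Gamma_\bbC\)-factors.

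Your bookkeeping is correct. At \(s=s_0+\tfrac12\) the arguments are \(2s_0+n+k'_j-j+1\) and \(2s_0+2n+l'_j-j\), and they sum to \(4ns_0+2n^2+|\bsk'|+|\bsl'|=|\bsk|+|\bsl|+2n(s_0+\tfrac12)-n\). The factor \(D_K^{n/2}\) sits in the numerator of \(\bbL\), so you actually obtain the claim up to \(D_K^{n}\in\bbQ^\times\), which is harmless. This proves the corollary whenever \(\min(k_n,l_n)\ge\mu/2\), i.e.\ for \(1\le s_0\le\min(k_n,l_n)-n\).

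The gap is the remaining range, where a splitting with \(\kappa\ne\nu\) is forced, and neither of your fixes works.

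\begin{itemize}
\item For \(\kappa\neq\nu\), \eqref{eq:infty_character} forces \(\chi_\infty\neq 1\). Theorem~\ref{thm:pullback} then produces \(D(s,f;\overline{\chi})\), i.e.\ the twisted value \(L(s_0+\tfrac12,f\otimes\overline{\chi},\mathrm{St})\), not \(L(s_0+\tfrac12,f,\mathrm{St})\). Only the differential operator depends on \(\kappa+\nu\) alone; the Eisenstein series and the \(L\)-function it produces do not.
\item Twisting \(f\) by \(\psi\circ\det\) to balance the weights fails for the same reason, and level is not the issue. The standard \(L\)-function of \(f\otimes(\psi\circ\det)\) is that of \(f\) twisted by the Hecke character \(x\mapsto\psi(x/\overline{x})\) of \(K\), which has nontrivial infinity type. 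So Proposition~\ref{prop:rationality} applied to the twisted form says nothing about \(L(s_0+\tfrac12,f,\mathrm{St})\).
\end{itemize}

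In fact no repair is possible. Already for \(n=1\), a form of weight \((k_1,l_1)\) (for instance a twist of a weight \((k/2,k/2)\) form, of weight \((k/2+t,k/2-t)\)) has archimedean factor \(\Gamma_\bbC(s+k_1-\tfrac12)\Gamma_\bbC(s+l_1-\tfrac12)\). At \(1-s\) with \(s=s_0+\tfrac12\) this becomes \(\Gamma_\bbC(k_1-s_0)\Gamma_\bbC(l_1-s_0)\), which has a pole once \(s_0\ge\min(k_1,l_1)\). Hence the points \(\min(k_1,l_1)\le s_0\le(k_1+l_1)/2-1\), which the stated hypothesis allows, are not critical.

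So your argument, like the paper's (which works throughout Section~\ref{sec:integrality} with \(\kappa=\nu=\mu/2\) and trivial \(\chi\)), justifies the corollary only under the stronger hypothesis \(s_0\le\min(k_n,l_n)-n\). You should restrict to that range rather than try to extend the argument.
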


\begin{remark}
  More precise results on the algebraicity of special values of standard
  \(L\)-functions for scalar-valued Hermitian modular forms
  were obtained by Shimura~\cite{shimura2000arithmeticity}.
  See also Bouganis~\cite{bouganis2015algebraicity}.
\end{remark}

\begin{remark}
  When \(n=1\) and the class number of \(K\) is \(1\), we have
  \[
    L\bigl(s_0 + \tfrac{1}{2},\, f^\sharp,\, \mathrm{St}\bigr)
    = L\bigl(s_0 + \tfrac{k}{2},\, f\bigr) L\bigl(s_0 + \tfrac{k}{2},\, f \otimes \chi_K\bigr),
  \]
  where \(f \in S_k(\mathrm{SL}_2(\bbZ))\) is a classical Hecke cusp form and \(L(s, f)\) denotes its Dirichlet \(L\)-series:
  \[
    L(s,f) = \sum_{n=1}^\infty a(n, f) n^{-s} = \prod_p (1 - a(p, f)p^{-s} + p^{k-1-2s})^{-1}.
  \]
  Moreover, the Petersson norm \((f^\sharp, f^\sharp)\) on the Hermitian upper half-plane coincides with the classical one (cf. \cite[Appendix to \S10]{ichino2010periods}).
  Thus, our result is consistent with the period relations shown by Shimura \cite{shimura1977periods}.
\end{remark}

We set \(\calH_n = \otimes_p^{\!\prime} \calH_{n,p}\) to be the (global) Hecke algebra,
and put \(\calH_n(\bbQ) = \calH_n \otimes_{\bbZ} \bbQ\).

We say that an element \(T \in \calH_n(\bbQ)\) is \emph{integral with respect to}
\(\calA^\sharp_n(\rho)(\bbZ)\) if
\[
  T \cdot f \in \calA^\sharp_n(\rho)(\bbZ)
  \quad \text{for all } f \in \calA^\sharp_n(\rho)(\bbZ).
\]
We denote by \(\calH_n^\rho\) the subset of \(\calH_n(\bbQ)\) consisting of
all elements that are integral with respect to
\(\calA^\sharp_n(\rho)\).

The relations defining the completed operators at inert and ramified places are unitriangular over \(\bbZ\), so the original generators are \(\bbZ\)-linear combinations
of the completed operators.
Applying \eqref{eq:hecke_inert_tp}, \eqref{eq:hecke_inert_tpi}, \eqref{eq:hecke_ramified_tpi}, and \eqref{eq:hecke_split_tpi}, we see that
\begin{proposition}
  Under the notation of Lemma~\ref{lem:hecke}, if \(k_n + l_n \ge 2n\), then
  \[
    \underset{p:\text{inert or ramified}}{\bigotimes\nolimits^{\prime}} \calH_{n,p}
    \subset \calH_n^{\rho_{n,(\bsk,\bsl)}}.
  \]
  If, in addition, \(k_n\ge n\) and \(l_n\ge n\), then
  \[
    \calH_n \subset \calH_n^{\rho_{n,(\bsk,\bsl)}}.
  \]
  No assumption on the class number of \(K\) is required.
  In particular, for \(\rho_n={\det}^{\mu/2}\rho_{n,\bsk}\boxtimes{\det}^{\mu/2}\rho_{n,\bsl}\) with \(\mu>2n\), we have
  \[
    \calH_n \subset \calH_n^{\rho_n}.
  \]
\end{proposition}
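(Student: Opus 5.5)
The plan is to read integrality of each generator off the explicit Fourier-coefficient formulas \eqref{eq:hecke_inert_tp}, \eqref{eq:hecke_inert_tpi}, \eqref{eq:hecke_ramified_tpi} and \eqref{eq:hecke_split_tpi}. Let \(f=(f_1,\ldots,f_h)^\sharp\in\calA^\sharp_n(\rho)(\bbZ)\). By Proposition~\ref{prop:Fourier}, the coefficients \(A_{T\cdot f}(\gamma_j,S)\) are the \(a(f_j,S)\). So it suffices to show two things: each coefficient of \(T\cdot f\) is a \(\bbZ\)-linear combination of \(\bbZ[U,V]_{(\bsk,\bsl)}\)-valued vectors, and the linear maps \(\rho(\alpha^{*-1}_{A,\infty},{}^t\alpha^{-1}_{A,\infty})\) preserve the lattice after the normalizing scalars are included.

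The steps run as follows.

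First, the slash by \(h_{is}\) and \(h'_{is}\), coming from diagonal \(\bft(\cdot)\)-type elements, only permutes the components \(f_j\) among the classes \(\gamma_j\) and introduces unitary-type factors. Their Fourier coefficients are again Fourier coefficients of some \(f_{j'}\) at a transformed \(S\), by Proposition~\ref{prop:Fourier} and Lemma~\ref{lem:Af}(3). So the terms \(a(f_{l_ij}|h_{is},\cdot)\) are integral vectors up to the action of \(\rho\) of matrices in \(\GL_n(\calO_K)\) times diagonal factors, and these preserve \(\bbZ[U,V]_{(\bsk,\bsl)}\).

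Second, the character sums over \(B\) are either zero or equal to \(\#\frakB\). This is exactly how the sets \(Q_s\) and \(Q_{st}\) are defined, and it produces the integer powers \(q_v^{(n-s)^2}\), \(q_v^{2(n-s-t)(n-t)+s^2}\), and so on. The remaining point is the power of \(p=q_{\pi_i}\) in front, \(p^{2r_n+n^2}\), combined with \(\rho(\alpha_A^{*-1},{}^t\alpha_A^{-1})\).

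Third, and this is the main obstacle, I must show that
\[
p^{2r_n+n^2}\,\rho_{n,\bsk}(\alpha_A^{*-1})\otimes\rho_{n,\bsl}({}^t\alpha_A^{-1})\cdot{\det}^{\mu}\text{-factors}
\]
times the \(q_v\)-power maps \(\bbZ[U,V]_{(\bsk,\bsl)}\) into itself. Write \(\alpha_A^{-1}=p^{-1}\beta\) with \(\beta\) integral. This is possible because \(\varpi A^{-1}\) is integral when \(A\) is integral with elementary divisors dividing \(\varpi\) (respectively \(\varpi^2\) or \(\varpi\overline\varpi\)). The entries of \(\rho_{n,\bsk}(\beta)\) on the bideterminant basis are integral polynomials of degree \(|\bsk|\), so the denominator introduced is at most \(p^{-(|\bsk|+|\bsl|)/\ldots}\) together with \(\det^{-k_n}\)-type contributions.

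Here the hypothesis \(k_n+l_n\ge 2n\) enters. I would write \(\rho_{n,\bsk}=\det^{k_n}\otimes\rho_{n,\bsk-k_n}\), and likewise for \(\bsl\), so that the determinant part contributes \(|\det\alpha_A|^{-(k_n+l_n)}\), which is a power of \(p\). I would then compare exponents case by case in \(s,t\), using \(r_n=\sum(k_i+l_i)/2-n^2\) and the size of \(\#\frakB\), to check that the net exponent of \(p\) is \(\ge0\). The worst case is the smallest \(s\) in \eqref{eq:hecke_inert_tp}, where \(\#\frakB\) contributes least. The inequality reduces to roughly \(\tfrac{k_n+l_n}{2}\cdot(\text{number of }\varpi\text{-divisors})\ge n\cdot(\ldots)\), which holds when \(k_n+l_n\ge 2n\).

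At split places, \eqref{eq:hecke_split_tpi} treats the \(\frakP\) and \(\overline\frakP\) components separately: \(\rho_{n,\bsk}\) sees \(A\) and \(\rho_{n,\bsl}\) sees \(D\). So the same count must hold for each factor individually, which is why \(k_n\ge n\) and \(l_n\ge n\) are needed there.

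Finally, the completed operators \(\widetilde{T_i}\) determine the \(T_i\) via a unitriangular \(\bbZ\)-relation, so integrality of the former gives integrality of the generators and hence of the restricted tensor products. The claim for \(\mu>2n\) follows because \(\rho_n\) has \(k_n+\mu/2\ge n\) and \(l_n+\mu/2\ge n\). No class number assumption is needed, since all class-group bookkeeping is absorbed in the first step.
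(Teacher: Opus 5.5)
Your proposal follows the same route as the paper, which reads integrality off \eqref{eq:hecke_inert_tp}, \eqref{eq:hecke_inert_tpi}, \eqref{eq:hecke_ramified_tpi} and \eqref{eq:hecke_split_tpi} and recovers the generators from the completed operators through the unitriangular \(\bbZ\)-relations; your splitting \(\rho_{n,\bsk}={\det}^{k_n}\otimes\rho_{n,\bsk-k_n}\) does make the exponent count come out nonnegative (for \(T(\varpi)\) the net exponent of \(p\) is \(s(k_n+l_n-2n)+s^2\), and for \(\widetilde{T_i}(\varpi^2)\) it is \((s+2t)(k_n+l_n-2n)+(s+t)^2+t^2\)). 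One slip: your claim that the worst case is ``the smallest \(s\), where \(\#\frakB\) contributes least'' is backwards, because \(\#\frakB=q_v^{(n-s)^2}\) is largest at \(s=0\), where the exponent is \(0\) for every weight, and the hypothesis \(k_n+l_n\ge 2n\) only matters for \(s\ge 1\).
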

\begin{lemma}
  Assume that \(\mu>2n\).
  Let \(f\in\calS_n^\sharp(\rho_n)\) be a Hecke eigenform.
  Then \(\lambda_f(T)\) belongs to \(\calO_{\bbQ(f)}\) for any
  \(T\in\calH_n^{\rho_n}\).
\end{lemma}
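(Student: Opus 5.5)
The plan is the standard ``integral lattice stable under \(T\)'' argument, using Theorem~\ref{thm:integrality_automorphic} to provide the lattice.

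\textbf{Step 1: a Hecke-stable lattice.} Put \(M:=\calS_n^\sharp(\rho_n)(\bbZ)=\calA_n^\sharp(\rho_n)(\bbZ)\cap\calS_n(\rho_n)\). By Theorem~\ref{thm:integrality_automorphic}, every Hecke eigenform lies in \(\calA^\sharp_n(\rho_n)(\bbZ)\otimes\bbC\). The cuspidal condition is the vanishing of Fourier coefficients at degenerate \(S\), and this condition is defined over \(\bbQ\). Hence \(\calS_n^\sharp(\rho_n)=M\otimes_\bbZ\bbC\), and \(M\) spans the space of cusp forms.

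We check that \(M\) is a finitely generated free \(\bbZ\)-module of rank \(d=\dim\calS_n^\sharp(\rho_n)\). Choose finitely many pairs \((\gamma_j,S)\) and coordinate functionals such that evaluating the corresponding Fourier coefficients gives an injection \(\calS_n^\sharp(\rho_n)\hookrightarrow\bbC^N\); this is possible by Lemma~\ref{lem:neq0}. This map sends \(M\) into \(\bbZ^N\), so \(M\) is torsion-free and finitely generated. It has full rank because it spans.

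Each \(T\in\calH_n^{\rho_n}\) maps \(\calA_n^\sharp(\rho_n)(\bbZ)\) into itself by definition. Hecke operators also preserve cuspidality, so \(T\) maps \(M\) into \(M\). The action of \(T\) on \(\calS_n^\sharp(\rho_n)\) is therefore given, in a \(\bbZ\)-basis of \(M\), by a matrix in \(\M_d(\bbZ)\).

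\textbf{Step 2: eigenvalues are algebraic integers.} Let \(f\) be a Hecke eigenform with \(Tf=\lambda_f(T)f\). Then \(\lambda_f(T)\) is an eigenvalue of an integer matrix, so it is a root of a monic polynomial in \(\bbZ[X]\), namely the characteristic polynomial. Thus \(\lambda_f(T)\) is an algebraic integer.

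\textbf{Step 3: conclusion.} By definition \(\lambda_f(T)\in\bbQ(f)\), at least for \(T\in\calH_n\). For \(T\in\calH_n^{\rho_n}\subset\calH_n(\bbQ)\), \(T\) is a \(\bbQ\)-linear combination of elements of \(\calH_n\), so \(\lambda_f(T)\) is the corresponding \(\bbQ\)-linear combination of elements of \(\bbQ(f)\) and still lies in \(\bbQ(f)\). An algebraic integer in \(\bbQ(f)\) lies in \(\calO_{\bbQ(f)}\), which proves the lemma.

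\textbf{Main obstacle.} The delicate point is Step 1. One must ensure that \(M\) is genuinely a lattice of full rank in the cusp space, and not merely that \(\calA_n^\sharp(\rho_n)(\bbZ)\) spans the full space of automorphic forms.

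If the cuspidal projection is not visibly rational, I would instead work with the whole space \(\calA_n^\sharp(\rho_n)\), which is spanned by \(\calA_n^\sharp(\rho_n)(\bbZ)\) by Theorem~\ref{thm:integrality_automorphic}. Every element of \(\calS_n^\sharp(\rho_n)\) lies in this space. So \(\lambda_f(T)\) is an eigenvalue of \(T\) acting on the lattice \(\calA_n^\sharp(\rho_n)(\bbZ)\), which is finitely generated by the same Fourier-coefficient injection argument. The same characteristic-polynomial argument then applies.
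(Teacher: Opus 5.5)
Your proposal is correct and follows the paper's approach: the paper proves the lemma in one line from Theorem~\ref{thm:integrality_automorphic} and the definition of \(\calH_n^{\rho_n}\), and your Steps 1--3 (a \(T\)-stable full-rank \(\bbZ\)-lattice, the monic integral characteristic polynomial, and \(\lambda_f(T)\in\bbQ(f)\)) just spell out that deduction. Your fallback of using the full lattice \(\calA_n^\sharp(\rho_n)(\bbZ)\) is the cleanest way to sidestep the rationality of the cusp subspace, although your descent argument for the cusp condition is also valid.
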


\begin{proof}
  This follows immediately from Theorem~\ref{thm:integrality_automorphic}
  and the definition of \(\calH_n^{\rho_n}\).
\end{proof}
Now we define the integral ideal \(\frakA(f)\) of \(\bbQ(f)\)
attached to a Hecke cusp form
\(f \in \calS^\sharp_{n}(\rho_n)(\bbQ(f))\),
and state the following integrality lemma, formulated along the same lines as in
Mizumoto~\cite{mizumoto2005congruences,mizumoto1996on,mizumoto1996corrections}.

Put \(V=\bigoplus_{\tau}\bbC f^\tau\), where \(\tau\) runs over all embeddings
of \(\bbQ(f)\) into \(\bbC\), and let \(V^\perp\) be the orthogonal
complement of \(V\) in \(\calS^\sharp_{n}(\rho_n)\).
Let \(\nu(f)\) (resp. \(\kappa(f)\)) denote the exponent of the finite abelian group
\[\calS^\sharp_{n}(\rho_n)(\bbZ)/
  (V(\bbZ)\oplus V^\perp(\bbZ))
  \qquad
  \text{(resp. \(\mathcal{O}_{\bbQ(f)}/
    \bbZ[\lambda_f(T)\mid T\in \calH_n^{\rho_n}]\))}.
\]
We define
\[
  \frakA(f)=\kappa(f)\nu(f)\mathfrak{d}(\bbQ(f)),
\]
where \(\mathfrak{d}(\bbQ(f))\) denotes the different of
\(\bbQ(f)/\bbQ\).

Let \(f\in\calS_n^\sharp(\rho_n)(E)\).
A Fourier coefficient \(A_f(\gamma_j,S)\), with \(\gamma_j\in\frakG_0\) and \(S\in\Her_n(K)_{>0}\),
is called \emph{\(\frakp\)-primitive} if \(v_{\frakp}\bigl(A_f(\gamma_j,S)\bigr)=0\).
We say that \(f\) has a \(\frakp\)-primitive Fourier coefficient if it has a Fourier coefficient with this property.

For a prime ideal \(\frakq\) of \(E\), we define
\[
  v_{\frakq}(f)=\min_{\gamma_j,S}v_{\frakq}(A_f(\gamma_j,S)),
\]
where the minimum is well-defined because the Fourier coefficients of \(f\) have bounded denominators.
We say that \(f\) is \emph{\(p\)-normalized} if \(v_{\frakq}(f)=0\) for every prime ideal \(\frakq\mid p\) of \(E\).
Every nonzero form in \(\calS_n^\sharp(\rho_n)(E)\) can be made \(p\)-normalized by multiplication by an element of \(E^\times\).

\begin{lemma}[Integrality lemma]\label{lem:integrality}
  Assume that \(\mu>2n\).
  Let \(f\in \calS^\sharp_{n}(\rho_n)(\bbQ(f))\) be a Hecke cusp form,
  let \(\frakp\) be a prime ideal of \(\bbQ(f)\),
  and assume that \(f\) has a \(\frakp\)-primitive Fourier coefficient.
  Let \(L\) be an algebraic number field and let \(\frakP\) be a prime ideal of \(L\cdot \bbQ(f)\) lying above \(\frakp\).
  Then, for every \(g\in\calS_n^\sharp(\rho_n)(\calO_L)\),
  we have
  \[
    \frac{(f,g)}{(f,f)}
    \in
    \frakA(f)^{-1}\calO_{L\cdot\bbQ(f),\frakP}.
  \]
  In particular, if \(f\) is \(p\)-normalized for a rational prime \(p\),
  then
  \[
    \frac{(f,g)}{(f,f)}
    \in
    \frakA(f)^{-1}\calO_{L\cdot\bbQ(f),(p)},
  \]
  where \(\calO_{L\cdot\bbQ(f),(p)}\) denotes the localization of \(\calO_{L\cdot\bbQ(f)}\) at the set of prime ideals above \(p\).
\end{lemma}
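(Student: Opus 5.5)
Following Mizumoto, the plan is to write \(g\) in a basis of Hecke eigenforms adapted to \(f\), then control the component along \(f\) via a Hecke-algebra idempotent. Put \(E=\bbQ(f)\) and \(M=L\cdot E\).

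\emph{Step 1: split off the Galois orbit of \(f\).} By Theorem~\ref{thm:integrality_automorphic} and Proposition~\ref{prop:rationality}(1), \(\calS_n^\sharp(\rho_n)\) has an orthogonal basis of Hecke cusp forms defined over their Hecke fields. The conjugates \(f^\tau\) are Hecke eigenforms. Since the Hecke operators are self-adjoint up to the standard normalization, the \(f^\tau\) are pairwise orthogonal when their eigensystems differ, and they are orthogonal to \(V^\perp\). Both \(V\) and \(V^\perp\) are \(\Aut(\bbC)\)-stable, hence defined over \(\bbQ\), so the lattices \(V(\bbZ)\) and \(V^\perp(\bbZ)\) make sense.

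Take \(g\in\calS_n^\sharp(\rho_n)(\calO_L)\). Working coefficientwise with a \(\bbZ\)-basis of \(\calO_L\), we have \(\nu(f)g\in V(\calO_L)\oplus V^\perp(\calO_L)\). Write \(\nu(f)g=g_1+g_2\). Then \((f,g_2)=0\), so \((f,g)/(f,f)=\nu(f)^{-1}(f,g_1)/(f,f)\). It therefore suffices to show
\[
\frac{(f,g_1)}{(f,f)}\in (\kappa(f)\frakd(E))^{-1}\calO_{M,\frakP}
\qquad\text{for } g_1\in V(\calO_L).
\]

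\emph{Step 2: the projector onto \(f\).} Choose \(T\in\calH_n^{\rho_n}\) with \(\lambda_f(T)=\theta\) a primitive element, say \(E=\bbQ(\theta)\). Let \(P\) be the minimal polynomial of \(\theta\). Consider the operator
\[
\Pi=\frac{P(T)}{(T-\theta)\,P'(\theta)},
\]
a polynomial in \(T\) with coefficients in \(E\). On \(V\) it acts as the orthogonal projection onto \(\bbC f\): it kills \(f^\tau\) for \(\tau\neq\mathrm{id}\) and fixes \(f\). The polynomial \(P(x)/(x-\theta)\) has coefficients in \(\bbZ[\theta]\), and by the lemma preceding the integrality lemma \(T\) preserves integral forms. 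Hence \(P'(\theta)\,\Pi\,g_1\) has coefficients in \(\calO_M\). Also \(\Pi g_1=c f\) with \(c=(g_1,f)/(f,f)\), taken with the appropriate conjugate.

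One conductor factor remains. The different \(\frakd(E)\) is generated by the \(P'(\theta)\) for \(\theta\in\calO_E\), whereas our \(\theta\) lies only in \(\bbZ[\lambda_f(T)]\). Since \(\kappa(f)\calO_E\subset\bbZ[\lambda_f(T)\mid T\in\calH_n^{\rho_n}]\), the standard inequality gives \(\kappa(f)\frakd(E)\subset\) the ideal generated by such \(P'(\theta)\). This is Mizumoto's argument: sum over several \(\theta\), or use the dual basis under the trace form.

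\emph{Step 3: use the primitive coefficient.} We get \(c\,\kappa(f)\frakd(E)\,f\) with \(\frakP\)-integral coefficients. Since \(f\) has a coefficient \(A_f(\gamma_j,S)\) that is a \(\frakp\)-unit, comparing that coefficient shows that \(c\) lies in \((\kappa(f)\frakd(E))^{-1}\calO_{M,\frakP}\). The complex conjugation appearing in the inner product is harmless, because Hecke eigenvalues are real, so \(\overline{\theta}\) is again a Galois conjugate. The "in particular" statement follows by applying this at every \(\frakP\mid p\).

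\emph{Main obstacle.} The delicate step is Step 2: producing an integral Hecke polynomial that isolates \(f\) with denominator exactly \(\kappa(f)\frakd(E)\). This requires care when \(E\) is not monogenic over \(\bbZ[\lambda_f]\). I would follow Mizumoto's proof verbatim, replacing Siegel forms by the integral structure from Theorem~\ref{thm:integrality_automorphic}.
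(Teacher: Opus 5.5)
Your overall plan is the Mizumoto argument the paper cites: strip off $V^\perp$ using $\nu(f)$, cut out the $f$-component of $g_1$ with a Hecke projector, then read the denominator off a $\frakp$-primitive coefficient. However, Step~2 as you carry it out does not give the denominator $\kappa(f)\frakd(E)$. The inclusion you call standard, $\kappa(f)\frakd(E)\subset\bigl(P_\theta'(\theta) : \theta\in R\bigr)$ with $R=\bbZ[\lambda_f(T)\mid T\in\calH_n^{\rho_n}]$, fails once $d=[E:\bbQ]\ge 3$. Take $R=\bbZ+\kappa\calO_E$ with $\kappa>1$; this is a ring with $\calO_E/R\cong(\bbZ/\kappa\bbZ)^{d-1}$, so $\kappa(f)=\kappa$. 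Every $\theta\in R$ is $a+\kappa\alpha$ with $a\in\bbZ$ and $\alpha\in\calO_E$, so $P_\theta'(\theta)=\kappa^{d-1}P_\alpha'(\alpha)\in\kappa^{d-1}\frakd(E)$, and $\kappa^{d-1}\frakd(E)$ does not contain $\kappa\frakd(E)$. All the operators $\Pi_\theta$ coincide on $V$, so combining several $\theta$ only lets you clear denominators by elements of the ideal generated by the $P_\theta'(\theta)$. This route therefore yields at best $c\in\kappa^{-(d-1)}\frakd(E)^{-1}\calO_{M,\frakP}$.

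What works is the trace-dual construction you mention only in passing, used in place of the minimal-polynomial projector. Choose $T_1,\ldots,T_d\in\calH_n^{\rho_n}$ with $r_i=\lambda_f(T_i)$ a $\bbZ$-basis of $R$; this is possible because $\lambda_f(\calH_n^{\rho_n})=R$. Let $r_i^\vee$ be the dual basis for $\mathrm{Tr}_{E/\bbQ}$, and put $\Pi=\sum_i r_i^\vee T_i$. Writing $y=\sum_i\mathrm{Tr}_{E/\bbQ}(yr_i)\,r_i^\vee$ and using linear independence of the embeddings gives $\sum_i r_i^\vee\tau(r_i)=\delta_{\tau,\mathrm{id}}$. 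So $\Pi$ fixes $f$ and kills every other $f^\tau$, while $r_i^\vee\in R^\vee\subset(\kappa(f)\calO_E)^\vee=\kappa(f)^{-1}\frakd(E)^{-1}$, which is exactly the required bound. Your projector is the special case $R=\bbZ[\theta]$, via Euler's formula $\bbZ[\theta]^\vee=P'(\theta)^{-1}\bbZ[\theta]$.

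Separately, your dismissal of the complex conjugation is not right. At split primes the eigenvalues need not be real ($\bbQ(f)$ may be CM), and in any case the issue concerns $c$, not $\theta$. Your argument proves that $c=(g_1,f)/(f,f)$ lies in $\kappa(f)^{-1}\frakd(E)^{-1}\calO_{M,\frakP}$. The quantity in the statement is $(f,g_1)/(f,f)=\overline{c}$, which is then integral at $\overline{\frakP}$ and lies in $\overline{L}\cdot E$. This is harmless in the $p$-normalized form when $L$ is stable under complex conjugation, which is the situation of the main theorem. For the single-prime statement you must either phrase the conclusion for $(g,f)/(f,f)$ or put the primitivity hypothesis at $\overline{\frakp}$.
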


\begin{proof}
  The proof is identical to that of
  \cite[Theorem~4.1]{mizumoto1996on}
  and \cite[Lemma~3.1]{mizumoto2005congruences}.
\end{proof}

For non-negative integers \(\nu\) and \(m\) with
\(\ell(\bsk,\bsl)\le m<\nu\), let \(e_{(\bsk,\bsl)}(\nu,m)\) denote the exponent of
the finite abelian group
\([\calS^\sharp_{m}(\rho_m)(\bbZ)]_\mu^\nu/
[\calS^\sharp_{m}(\rho_m)]_\mu^\nu(\bbZ)\).
We agree that \(e_{(\bsk,\bsl)}(\nu,m)=1\) if
\(\calS^\sharp_{m}(\rho_m)=\{0\}\), and we put
\[
  \eta_{(\bsk,\bsl)}(\nu)
  :=\prod_{m=\ell(\bsk,\bsl)}^{\nu-1} e_{(\bsk,\bsl)}(\nu,m).
\]

\begin{proposition}\label{prop:integrality_Klingen}
  Assume that \(\mu>2n\).
  Let \(f\in \calS^\sharp_{r}(\rho_r)(\bbQ(f))\ (\ell(\bsk,\bsl)\leq r\leq n)\) be a Hecke cusp form
  having a primitive Fourier coefficient.
  If a prime \(p\) satisfies the following conditions:
  \begin{itemize}
    \item \(p>\mu+1\),
    \item \(p\) does not divide \(D_K\),
    \item the polynomial \(P_{\bsk,\bsl}^{(n,r)}(T)\) appearing in
          \eqref{eq:differential_operator} has coefficients in
          \((V_{\rho_n}\otimes V_{\rho'_r})(\bbZ_{(p)})\),
    \item either \(\mu\ge 2(n+r)\), or
          \(z_{i,p}\in \calO_{K_p}^{\times}\) for all \(1\leq i\leq h\),
  \end{itemize}
  then we have
  \[
    c_r \overline{\calC_{n+r,\mu}(f)}
    A_{[f]_{\mu}^{{n}}}(\gamma,S)
    \in
    V_{\rho_{n}}\bigl(
    \eta_{(\bsk,\bsl)}(r)^{-1}
    \frakA(f)^{-1}
    \calO_{\bbQ(f),(p)}
    \bigr)
  \]
  for any \(\gamma\in \frakG_0\) and \(S\in \Her_n(K)_{\ge 0}\).
\end{proposition}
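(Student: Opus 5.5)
We use the pullback formula with \((n_1,n_2)=(n,r)\) and isolate the contribution of \(f\) through the Petersson product against \(f\) in the second variable, controlled by the integrality lemma. Concretely, fix \(\gamma\in\frakG_0^{(n)}\) and \(S\in\Her_n(K)_{\ge0}\), and consider
\[
  \calG_\mu(\gamma,g_2;S)
  =(\sqrt{-1})^{-(|\bsk|+|\bsl|)}\sum_{r'=\ell(\bsk,\bsl)}^{r}c_{r'}\sum_{d=1}^{e_{r'}}
  \overline{\calC_{n+r,\mu}(f^{(r')}_d)}\,A_{[f^{(r')}_d]^{n}_\mu}(\gamma,S)\,[f^{(r')\dagger}_d]^{r}_\mu(g_2),
\]
which is Lemma~\ref{lem:pullback} written in Fourier form. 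Under the listed hypotheses on \(p\), which are exactly those of Proposition~\ref{prop:integrality_pullback} with \(n_1=n\), \(n_2=r\), \(n_1+n_2=n+r\), this function lies in \((V_{\rho_n}\otimes\calA_r^\sharp(\rho'_r))(\bbZ_{(p)})\). Here \(\mu>2n\ge n+r\), and \(\mu\ge n+r+2\) holds by parity.

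We first choose the basis \(\{f^{(r)}_d\}\) of \(\calS_r^\sharp(\rho_r)\) so that it contains \(f\), say \(f=f^{(r)}_1\). This is possible by Proposition~\ref{prop:rationality}(1), since eigenforms with distinct eigenvalue systems are orthogonal. We then split \(\calG_\mu\) as
\[
  \calG_\mu=\calG_{<r}+\calG_{=r},
\]
where \(\calG_{<r}\) collects the terms with \(r'<r\), which are Klingen--Eisenstein lifts from lower degree \(r'\), and \(\calG_{=r}\) collects the cusp forms with \(r'=r\).

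The key step is to remove \(\calG_{<r}\) while losing at most the factor \(\eta_{(\bsk,\bsl)}(r)\). We proceed by induction downward through the degrees \(r'=r-1,\ldots,\ell(\bsk,\bsl)\). At each stage we take the \(\Phi\)-operator (Siegel boundary) to degree \(r'\) of the current integral form. This yields an element of \(\calA_{r'}^\sharp(\rho_{r'})(\bbZ_{(p)})\) whose cuspidal part is integral, with a correction from still lower degrees. We lift that cuspidal part by \([\cdot]_\mu^{r}\), which costs the factor \(e_{(\bsk,\bsl)}(r,r')\) by the definition of that exponent, and subtract the lift. Since \(\Phi\) of \([h]^r_\mu\) returns \(h\), the difference has a vanishing degree-\(r'\) boundary component. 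After all stages, \(\eta_{(\bsk,\bsl)}(r)\,\calG_\mu\) minus an integral combination of Klingen--Eisenstein series is a cusp form \(G\in (V_{\rho_n}\otimes\calS_r^\sharp(\rho'_r))(\bbZ_{(p)})\), and \(G\) equals \(\eta_{(\bsk,\bsl)}(r)\,\calG_{=r}\). This bookkeeping, in particular checking that the \(\Phi\)-operator commutes with \(\sharp\) at the level of the integral Fourier structure and that \([\cdot]_\mu\) is compatible with \(\Phi\), is where I expect the main obstacle to lie. I would model it on Mizumoto's argument in the Siegel case.

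Finally, we pair \(G\) with \(f^\dagger\) using the Petersson product in \(g_2\). By orthogonality only the \(d=1\) term survives:
\[
  \frac{(G,f^\dagger)}{(f^\dagger,f^\dagger)}
  =\pm\,\eta_{(\bsk,\bsl)}(r)\,c_r\,\overline{\calC_{n+r,\mu}(f)}\,A_{[f]^n_\mu}(\gamma,S).
\]
Here \(f^\dagger\) is a Hecke cusp form with Fourier coefficients \((\sqrt{-1})^{|\rho_r|}\overline{a(f_i,T)}\). It therefore has a \(\frakp\)-primitive coefficient, and its Hecke field and \(\frakA\) are the conjugates of those of \(f\); since \(\bbQ(f)\) is totally real or CM, these match \(\bbQ(f)\) and \(\frakA(f)\). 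Applying the integrality lemma (Lemma~\ref{lem:integrality}) componentwise in \(V_{\rho_n}\), with \(g\) running over the \(\bbZ_{(p)}\)-components of \(G\) (after clearing a prime-to-\(p\) denominator), gives a quotient in \(\frakA(f)^{-1}\calO_{\bbQ(f),(p)}\). Dividing by \(\eta_{(\bsk,\bsl)}(r)\) then yields the claim.
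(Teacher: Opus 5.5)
Your overall strategy is the paper's: take the $(\gamma,S)$-th Fourier coefficient $\calG^{(n,r)}_\mu(\gamma,g_2;S)$ of the pulled-back Eisenstein series, and pass to its cuspidal part at the cost of $\eta_{(\bsk,\bsl)}(r)$. Then isolate the $f$-term by pairing with $f^\dagger$ and applying Lemma~\ref{lem:integrality} together with $\frakA(f^\dagger)=\frakA(f)$. The paper does not carry out the middle step; it cites Mizumoto's Proposition~6.2.

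Your reconstruction of that step does not work as written, because you run the induction downward, $r'=r-1,\ldots,\ell(\bsk,\bsl)$. Write the $g_2$-form as $\sum_m [h_m]^r_\mu$ with $h_m$ cuspidal of degree $m$. At your first stage, $\Phi$ of the integral form is $h_{r-1}+\sum_{m<r-1}[h_m]^{r-1}_\mu$. Since the lower-degree lifts are still present, three things go wrong:
\begin{itemize}
\item The cuspidal part $h_{r-1}$ is \emph{not} known to be integral. By the very phenomenon this proposition is about, it is only integral up to $\eta_{(\bsk,\bsl)}(r-1)$.
\item Lifting $h_{r-1}$ therefore does not cost merely $e_{(\bsk,\bsl)}(r,r-1)$.
\item After subtracting $[h_{r-1}]^r_\mu$, the degree-$(r-1)$ boundary is $\sum_{m<r-1}[h_m]^{r-1}_\mu$, not zero.
\end{itemize}
So your claimed per-stage costs, and hence the total factor $\eta_{(\bsk,\bsl)}(r)$, are not justified by the procedure you describe.

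The fix is to run the induction upward, which is Mizumoto's argument. Put $\ell=\ell(\bsk,\bsl)$. There are no nonzero forms of weight $\rho_m$ for $m<\ell$, so $\Phi^{r-\ell}$ of the integral form is exactly the cusp form $h_\ell$. Hence $h_\ell$ is integral, and $e_{(\bsk,\bsl)}(r,\ell)[h_\ell]^r_\mu$ is integral. The difference $D_1=\sum_{m>\ell}[h_m]^r_\mu$ then has denominators dividing $e_{(\bsk,\bsl)}(r,\ell)$. Moreover $\Phi^{r-\ell-1}D_1=h_{\ell+1}$ exactly, since the higher cuspidal lifts are killed. So $h_{\ell+1}$ has the same denominators, and $[h_{\ell+1}]^r_\mu$ has denominators dividing $e_{(\bsk,\bsl)}(r,\ell)\,e_{(\bsk,\bsl)}(r,\ell+1)$. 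Continuing up to $m=r-1$, the cuspidal part has denominators dividing $\prod_{m=\ell}^{r-1}e_{(\bsk,\bsl)}(r,m)=\eta_{(\bsk,\bsl)}(r)$.

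With this correction, and the compatibility of $\Phi$ with $\sharp$ and with Klingen lifts that you already flag, the rest of your proposal matches the paper. That includes the orthogonality computation of $(G,f^\dagger)/(f^\dagger,f^\dagger)$ and the application of the integrality lemma after clearing a prime-to-$p$ denominator.
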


\begin{proof}
  We consider
  \(\pr(\calG^{(n,r)}_\mu(\gamma,g_2;S))\), where
  \(\pr:\calA^\sharp_n(\rho_n)\to\calS^\sharp_{n}(\rho_n)\)
  denotes the projection onto the space of cusp forms.
  We have
  \[
    \pr(\calG^{(n,r)}_\mu(\gamma,g_2;S))
    =(\sqrt{-1})^{-(|\bsk|+|\bsl|)}c_r
    \sum_{d=1}^{e_r}
    \overline{\calC_{n+r,\mu}\bigl(f^{(r)}_d\bigr)}
    A_{[f^{(r)}_{d}]_{\mu}^{{n}}}(\gamma,S)
    f^{(r)\dagger}_{d}(g_2),
  \]
  and by Proposition~\ref{prop:integrality_pullback},
  \(\pr(\calG^{(n,r)}_\mu(\gamma,g_2;S))\) belongs to
  \((V_{\rho_{n}}\otimes \calA_{r}^{\sharp}(\rho'_{r}))
  (\eta_{(\bsk,\bsl)}(r)^{-1}\bbZ_{(p)})\)
  (cf.~\cite[Proposition~6.2]{mizumoto1996on}).
  Applying Lemma~\ref{lem:integrality} to \(f^\dagger\) and
  \(\pr(\calG^{(n,r)}_\mu(\gamma,g_2;S))\), and using \(\frakA(f^\dagger)=\frakA(f)\),
  we obtain the desired result.
\end{proof}

\section{Main theorem}\label{sec:main}
The notation and assumptions in the previous sections remain in force.

\begin{definition}
  Let \(F,G\in\calA_n^\sharp(\rho_n)\) be Hecke eigenforms, and let \(\frakp\) be a prime ideal of
  \(\bbQ(F,G)=\bbQ(F)\cdot\bbQ(G)\).
  If
  \[
    \lambda_F(T)\equiv \lambda_G(T)\pmod{\frakp}
    \quad\text{for all }T\in\calH_n^{\rho_n},
  \]
  we say that \(F\) and \(G\) are \emph{Hecke congruent}, and we denote this by
  \[
    F \equiv_{ev} G \pmod{\frakp}.
  \]
\end{definition}

For \(r \ge \ell(\bsk,\bsl)\), let
\(\{f^{(r)}_1,\ldots,f^{(r)}_{e_r}\}\) be an orthogonal basis of
\(\calS_{r}^{\sharp}(\rho_r)\) with respect to the Petersson inner product,
consisting of Hecke cusp forms and satisfying the following conditions:
\begin{itemize}
  \item Each \(f^{(r)}_d\) belongs to
        \(\calS_{r}^{\sharp}(\rho_r)(\bbQ(f^{(r)}_d))\).
  \item The basis is permuted under the action of \(\Aut(\bbC)\).
\end{itemize}
The existence of such a basis follows from Proposition~\ref{prop:rationality}.
Whenever a rational prime \(p\) is fixed,
we assume that all Hecke cusp forms under consideration have been made \(p\)-normalized by multiplying them by suitable nonzero scalars.

We are now in a position to state our main result.
\begin{theorem}\label{thm:main}
  Let \(n\) be a positive integer, let \(\mu\) be a positive even integer with
  \(\mu > 2n\), and let \((\bsk,\bsl)\) be a pair of dominant integral weights such
  that \(\ell(\bsk), \ell(\bsl) \le n\).
  Let \(f \in \calS_{r}^{\sharp}(\rho_r)(\bbQ(f))\)
  be a Hecke cusp form which is a constant multiple of \(f^{(r)}_1\)
  for some integer \(r\) satisfying \(\ell(\bsk,\bsl) \le r < n\).
  Let \(\frakp\) be a prime ideal of \(\bbQ(f)\), and let \(p\) be the rational prime below \(\frakp\).
  Assume that \(S_0\in \Her_n(K)_{>0}\) and \(\gamma_0\in \frakG_0\) satisfy the following conditions:
  \begin{enumerate}
    \item\label{item:valuation_condition}
          We have
          \[
            v_\frakp\!\left(
            \overline{\calC_{2n,\mu}\bigl(f\bigr)}
            A_{[f]_{\mu}^{n}}(\gamma_0,S_0)\overline{A_{[f]_{\mu}^{n}}(\gamma_0,S_0)}
            \right)=:-\alpha<0.
          \]
          Note that the above quantity remains unchanged if we replace \(f\) by
          \(cf\) for any constant \(c\in \bbQ(f)^\times\).

    \item\label{item:CalC_condition}
          If \(\ell(\bsk,\bsl)\le \nu \le n-1\), \(1\le d \le e_{\nu}\), and
          \((\nu, d) \neq (r,1)\), then
          \[
            v_\frakq\!\left(\calC_{n+\nu,\mu}(f^{(\nu)}_d)\right)\le 0
          \]
          for any prime ideal \(\frakq\) of \(\bbQ(f,f^{(\nu)}_d)\) lying above \(\frakp\).

    \item\label{item:frakA_condition}
          The ideals \(\frakA(f^{(\nu)}_d)\) are coprime to \(\frakp\) in
          \(\bbQ(f,f^{(\nu)}_d)\) for all
          \(\ell(\bsk,\bsl)\le \nu \le n-1\) and \(1\le d \le e_{\nu}\).

    \item\label{item:prime_conditions}
          The prime \(p\) satisfies the following conditions:
          \begin{enumerate}
            \item\label{item:prime_mu} \(p>\mu+1\);
            \item\label{item:prime_DK} \(p\) does not divide \(D_K\);
            \item\label{item:prime_eta} \(p\) does not divide
                  \(\prod_{\nu=\ell(\bsk,\bsl)}^{n-1}
                  \eta_{(\bsk,\bsl)}(\nu)\);
            \item\label{item:prime_coefficients_P}
                  For every integer \(r'\) with
                  \(\ell(\bsk,\bsl)\le r'\le n\), the polynomial
                  \(P_{\bsk,\bsl}^{(n,r')}(T)\) appearing in
                  \eqref{eq:differential_operator} has coefficients in
                  \(\bigl(V_{\rho_n}\otimes V_{\rho'_{r'}}\bigr)(\bbZ_{(p)})\).
            \item\label{item:prime_Eisenstein}
                  either \(\mu\ge 4n\), or
                  \(z_{i,p}\in \calO_{K_p}^{\times}\) for all \(1\le i\le h\);
            \item\label{item:prime_cnu}  \(c_\nu\) is \(p\)-unit for all
                  \(\ell(\bsk,\bsl)\le \nu \le n-1\).
          \end{enumerate}
  \end{enumerate}

  Then there exists a Hecke cusp form
  \(F\in \calS^{\sharp}_{n}(\rho_{n})\)
  such that
  \[
    F \equiv_{ev} [f]_{\mu}^{n} \pmod{\frakP}
  \]
  for some prime ideal \(\frakP\) of \(\bbQ(f,F)\) lying above \(\frakp\).

  Moreover, if \(\frakp\) is coprime to \(\frakA(f^{(n)}_d)\) for all \(1\le d \le e_{n}\),
  then there exists a Hecke cusp form
  \(G\in\calS_n^\sharp(\rho_n)\)
  such that
  \[
    G \equiv_{ev} [f]_{\mu}^{n} \pmod{\frakP'^{\alpha}}
  \]
  for some prime ideal \(\frakP'\) of \(\bbQ(f,G)\) lying above \(\frakp\).
\end{theorem}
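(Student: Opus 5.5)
The plan is to follow Katsurada--Mizumoto and use the doubling pullback with \((n_1,n_2)=(n,n)\). Consider
\[
\calF_\mu(g_1,\gamma_0;S_0)=\sum_{\nu=\ell(\bsk,\bsl)}^{n}(-1)^{\nu\mu/2}c_\nu\sum_{d=1}^{e_\nu}\overline{\calC_{2n,\mu}(f^{(\nu)}_d)}\,\overline{A_{[f^{(\nu)}_d]^n_\mu}(\gamma_0,S_0)}\,[f^{(\nu)}_d]^n_\mu(g_1),
\]
where \([f^{(n)}_d]^n_\mu=f^{(n)}_d\) is a cusp form. Hypothesis (4)(a),(b),(d),(e) together with Proposition~\ref{prop:integrality_pullback} give \(\calF_\mu(\,\cdot\,,\gamma_0;S_0)\in(\calA^\sharp_n(\rho_n)\otimes V_{\rho'_n})(\bbZ_{(p)})\). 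The case \(\mu\ge 4n\) in (4)(e) is exactly what is needed for \(2n\) variables. Evaluating at a suitable integral pair \((C,D)\) gives a \(p\)-integral \(V_{\rho_n}\)-valued form \(\Phi\).

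Write \(\Phi=\Phi_{\mathrm{Kl}}+\Phi_{\mathrm{cusp}}\), where \(\Phi_{\mathrm{Kl}}\) collects the terms with \(\nu<n\) and \(\Phi_{\mathrm{cusp}}\) the terms with \(\nu=n\). The key step is to show that, after dividing by \(\beta:=c_r\overline{\calC_{2n,\mu}(f)}\,\overline{A_{[f]^n_\mu}(\gamma_0,S_0)}\), every Klingen term except \((r,1)\) becomes \(\frakP\)-integral, while the coefficient of \([f]^n_\mu\) in \(\Phi\) itself has \(\frakp\)-adic valuation \(-\alpha\). For this I would apply Proposition~\ref{prop:integrality_Klingen}, with \(n+\nu\) playing the role of the degree, to control \(c_\nu\overline{\calC_{n+\nu,\mu}(f^{(\nu)}_d)}A_{[f^{(\nu)}_d]^n_\mu}\). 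Hypotheses (2), (3), (4)(c) and (4)(f) then show that \(A_{[f^{(\nu)}_d]^n_\mu}(\gamma,S)\) is \(\frakq\)-integral for \((\nu,d)\ne(r,1)\).

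Comparing \(\calC_{2n,\mu}\) with \(\calC_{n+\nu,\mu}\) requires the ratio \(\bbL_F(2n,\mu)/\bbL_F(n+\nu,\mu)\) to be a \(p\)-unit, or at least integral. I expect this comparison to be the main obstacle. My first attempt would be to use \(p>\mu+1\) together with Remark~\ref{rem:Bernoulli}, which makes the Bernoulli factors \(p\)-integral. If this bound goes the wrong way, I would rearrange which \(\calC\) appears by applying Proposition~\ref{prop:integrality_Klingen} directly to \(\Phi\) instead.

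Next I would use the Hecke structure. Let \(T\) range over \(\calH^{\rho_n}_n\), which preserves integrality by the Proposition preceding the integrality Lemma. Suppose no cusp eigenform \(F\) satisfies \(F\equiv_{ev}[f]^n_\mu \pmod{\frakP}\). Since \(\bbZ\)-integrality of eigenvalues holds for all eigenforms, there are operators \(T_i\) with \(\lambda_{f^{(n)}_d}(T_i)-\lambda_{[f]}(T_i)\) a \(\frakP\)-unit. Applying \(\prod_d\bigl(T_{i_d}-\lambda_{[f]}(T_{i_d})\bigr)\) to \(\beta^{-1}\Phi\) kills the cuspidal parts up to units. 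It also multiplies the other Klingen components by integral factors and fixes \([f]^n_\mu\).

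From this I would obtain \(\beta^{-1}\cdot(\text{integral form})\equiv[f]^n_\mu+(\text{integral})\), where the left side has valuation \(\ge \alpha>0\) once \(\beta\) is divided out. The conclusion is that \([f]^n_\mu\) is congruent to \(0\) modulo \(\frakP\) on its Fourier coefficients. To turn this into a contradiction, pair it against a primitive coefficient, for instance \(A_{[f]}(\gamma_0,S_0)\) after \(p\)-normalization, or use linear independence as in Lemma~\ref{lem:neq0} modulo \(\frakP\), where the other Klingen series \([f^{(\nu)}_d]\) are separated by Hecke eigenvalues or by integrality. A cleaner variant, in the style of Katsurada--Mizumoto, is to take the Fourier coefficient at \((\gamma_0,S_0)\) of the resulting form: the \([f]\)-term contributes \(\beta A_{[f]}(\gamma_0,S_0)\), which has valuation \(-\alpha\), while all other terms are integral. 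This contradiction produces \(F\).

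For the refined statement, assume \(\frakA(f^{(n)}_d)\) is prime to \(\frakp\). Then Lemma~\ref{lem:integrality} gives integrality of each cusp projection coefficient \(\beta_d f^{(n)}_d\) individually. The standard Mizumoto argument then yields a congruence modulo \(\frakP'^{\alpha}\): the valuation-\(\alpha\) mass of \([f]\) must be absorbed by cusp eigenforms, and some single \(G\) must then carry eigenvalues congruent to order \(\alpha\).
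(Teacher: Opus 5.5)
Your overall architecture matches the paper's. You use the $(n,n)$ pullback and get $p$-integrality of $\calF_\mu(\,\cdot\,,\gamma_0;S_0)$ from Proposition~\ref{prop:integrality_pullback}, with $\mu\ge 4n$ because the degree is $2n$. You control the terms $(\nu,d)\ne(r,1)$, $\nu<n$, via Proposition~\ref{prop:integrality_Klingen}. Your first attempt on the $L$-ratio is correct: $\bbL_F(2n,\mu)/\bbL_F(n+\nu,\mu)=\prod_{i=n+\nu}^{2n-1}L_F(1-\mu+i,\chi_{-D_K}^i)$ is a product of $p$-integral values, as in Lemma~\ref{lem:integral_Klingen_Fourier}.

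The genuine gap is the Hecke-operator step. The operator $T-\lambda_{[f]^n_\mu}(T)$ \emph{annihilates} $[f]^n_\mu$, and it multiplies $f^{(n)}_{d'}$ by $\lambda_{f^{(n)}_{d'}}(T)-\lambda_{[f]^n_\mu}(T)$. So your product neither kills the cusp summands nor fixes $[f]^n_\mu$. After you apply it, the $[f]^n_\mu$-summand is gone, and the $(\gamma_0,S_0)$-coefficient cannot give any contradiction. You need $\prod_d\bigl(T_{i_d}-\lambda_{f^{(n)}_d}(T_{i_d})\bigr)$ instead. It kills every cuspidal summand. It multiplies $\beta[f]^n_\mu$ by the $\frakQ$-unit $\prod_d\bigl(\lambda_{[f]^n_\mu}(T_{i_d})-\lambda_{f^{(n)}_d}(T_{i_d})\bigr)$. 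It keeps the other, already integral, Klingen summands integral. The $(\gamma_0,S_0)$-coefficient of an integral form would then have valuation $-e_{\frakQ/\frakp}\alpha$, a contradiction; this is Lemma~\ref{lem:cong}.

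You should also drop the division by $\beta$. Only $v_\frakQ(\beta A_{[f]^n_\mu}(\gamma_0,S_0))$ is controlled, not $v_\frakQ(\beta)$, so your bound ``$\ge\alpha$'' is unjustified. Moreover, $[f]^n_\mu$ need not have a primitive coefficient, so ``$[f]^n_\mu\equiv 0$'' is not contradictory by itself.

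For the refined statement, your claim that Lemma~\ref{lem:integrality} makes each $\beta_d f^{(n)}_d$ integral is false. In fact $\sum_d\beta_d f^{(n)}_d$ cannot be $\frakQ$-integral, since then $\beta[f]^n_\mu$ would be integral, contradicting (1). The missing mechanism has two steps, and the second is where your operator $T-\lambda_{[f]^n_\mu}(T)$ belongs.

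First, take the $(\gamma_0,S_0)$-coefficient of the integral form $\beta[f]^n_\mu+\sum_d\beta_d f^{(n)}_d$. This gives $v_\frakQ\bigl(\sum_d A_{f^{(n)}_d}(\gamma_0,S_0)\otimes\beta_d\bigr)=-e_{\frakQ/\frakp}\alpha$. Write this in integral bases; the coordinates $a_{d1}$ are integral by $p$-normalization. The ultrametric inequality then gives a \emph{single} index $d$, fixed independently of $T$, with $v_\frakQ(a_{d1}b_{d1})\le -e_{\frakQ/\frakp}\alpha$.

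Second, for each $T$, apply $T-\lambda_{[f]^n_\mu}(T)$ to get the integral cusp form $H=\sum_{d'}\beta_{d'}\bigl(\lambda_{f^{(n)}_{d'}}(T)-\lambda_{[f]^n_\mu}(T)\bigr)f^{(n)}_{d'}$. Apply Lemma~\ref{lem:integrality} to $f^{(n)}_d$ at $\overline{\frakQ}$, since the Petersson product is conjugate-linear. Together with orthogonality and $\frakA(f^{(n)}_d)$ prime to $\frakp$, this gives $v_\frakQ\bigl(a_{d1}b_{d1}(\lambda_{f^{(n)}_d}(T)-\lambda_{[f]^n_\mu}(T))\bigr)\ge 0$. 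Hence $v_\frakQ\bigl(\lambda_{f^{(n)}_d}(T)-\lambda_{[f]^n_\mu}(T)\bigr)\ge e_{\frakQ/\frakp}\alpha$ for all $T$, and $G=f^{(n)}_d$ works.
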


Before proving Theorem~\ref{thm:main}, we prepare the following lemmata.
\begin{lemma}\label{lem:integral_Klingen_Fourier}
  Assume the same hypotheses as in Theorem~\ref{thm:main}.
  Let \(\ell(\bsk,\bsl)\le \nu \le n-1\) and \(1\le d \le e_{\nu}\), and assume that
  \((\nu,d)\neq (r,1)\).
  Then the Fourier coefficients of
  \[
    \overline{\calC_{2n,\mu}\bigl(f^{(\nu)}_d\bigr)}\,
    \overline{A_{[f^{(\nu)}_{d}]_{\mu}^{n}}(\gamma_0,S_0)}\,
    [f^{(\nu)}_{d}]_{\mu}^{n}(g_1)
  \]
  are \(\frakp\)-integral; that is, they are \(\frakq\)-integral elements of
  \(V_{\rho_{n}}\otimes V_{\rho'_{n}}\bigl(\bbQ(f,f^{(\nu)}_d)\bigr)\)
  for every prime ideal \(\frakq\) of \(\bbQ(f,f^{(\nu)}_d)\) lying above \(\frakp\).
\end{lemma}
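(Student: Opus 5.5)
Fix $(\nu,d)\neq(r,1)$ and write $g=f^{(\nu)}_d$, $E=\bbQ(f,g)$, and let $\frakq$ be a prime of $E$ above $\frakp$. The quantity
\[
\overline{\calC_{2n,\mu}(g)}\,\overline{A_{[g]_\mu^n}(\gamma_0,S_0)}\,A_{[g]_\mu^n}(\gamma,S)
\]
factors naturally as
\[
\Bigl(c_\nu\overline{\calC_{n+\nu,\mu}(g)}A_{[g]_\mu^n}(\gamma,S)\Bigr)\cdot\Bigl(c_\nu\overline{\calC_{n+\nu,\mu}(g)}\,\overline{A_{[g]_\mu^n}(\gamma_0,S_0)}\Bigr)\cdot\frac{\overline{\calC_{2n,\mu}(g)}}{c_\nu^2\,\overline{\calC_{n+\nu,\mu}(g)}^{\,2}}.
\]
We bound each factor separately at $\frakq$.

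\textbf{First two factors.} Apply Proposition~\ref{prop:integrality_Klingen} with $(n,r)$ replaced by $(n,\nu)$. Its hypotheses are supplied by condition~\eqref{item:prime_conditions}:
\begin{itemize}
\item $p>\mu+1$ and $p\nmid D_K$;
\item the coefficients of $P^{(n,\nu)}_{\bsk,\bsl}$ are $p$-integral;
\item $\mu\ge 4n\ge 2(n+\nu)$, or $z_{i,p}$ are units.
\end{itemize}
Since $g$ is $p$-normalized, it has a primitive Fourier coefficient. The proposition then gives
\[
c_\nu\overline{\calC_{n+\nu,\mu}(g)}A_{[g]_\mu^n}(\gamma,S)\in V_{\rho_n}\bigl(\eta_{(\bsk,\bsl)}(\nu)^{-1}\frakA(g)^{-1}\calO_{E,(p)}\bigr).
\]
By \eqref{item:prime_eta} and \eqref{item:frakA_condition}, both $\eta_{(\bsk,\bsl)}(\nu)$ and $\frakA(g)$ are prime to $\frakq$, so this expression is $\frakq$-integral. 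Complex conjugation permutes the primes above $p$: $\bbQ(g)$ is totally real or CM, and the basis is permuted under $\Aut(\bbC)$. Hence the conjugate factor $c_\nu\overline{\calC_{n+\nu,\mu}(g)}\,\overline{A_{[g]_\mu^n}(\gamma_0,S_0)}$ is also $\frakq$-integral, using that $c_\nu\in\bbQ$.

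\textbf{Third factor.} We must show
\[
v_\frakq\bigl(\calC_{2n,\mu}(g)\bigr)\ \ge\ 2v_\frakq\bigl(\calC_{n+\nu,\mu}(g)\bigr)+2v_\frakq(c_\nu).
\]
By \eqref{item:prime_cnu}, $v_\frakq(c_\nu)=0$. By the definition of $\calC$,
\[
\calC_{2n,\mu}(g)=\frac{\bbL_F(2n,\mu)}{\bbL_F(n+\nu,\mu)}\,\calC_{n+\nu,\mu}(g),
\]
and the ratio $\bbL_F(2n,\mu)/\bbL_F(n+\nu,\mu)$ is a product of values $L(1-\mu+i,\chi^i_{-D_K})$. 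These are $p$-integral by Remark~\ref{rem:Bernoulli} together with $p>\mu+1$ and $p\nmid D_K$. It therefore suffices to have
\[
v_\frakq\bigl(\calC_{n+\nu,\mu}(g)\bigr)\ge 2v_\frakq\bigl(\calC_{n+\nu,\mu}(g)\bigr),\quad\text{i.e.}\quad v_\frakq\bigl(\calC_{n+\nu,\mu}(g)\bigr)\le 0,
\]
which is exactly condition~\eqref{item:CalC_condition}. Multiplying the three $\frakq$-integral factors yields the claim.

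\textbf{Expected obstacle.} The main difficulty is bookkeeping of conjugation and the choice of prime:
\begin{itemize}
\item condition \eqref{item:CalC_condition} must hold at $\bar\frakq$ as well as at $\frakq$; since it is stated for all primes above $\frakp$, this follows once $\bar{\frakp}$-type issues are handled via the $\Aut(\bbC)$-stability of the basis;
\item the vector-valued coefficient $\overline{A(\gamma_0,S_0)}$ lives in $V_{\rho'_n}$, and its valuation must be taken in the integral structure from the bideterminant basis, which is conjugation-stable.
\end{itemize}
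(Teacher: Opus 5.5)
Your factorization multiplies out correctly, but the second factor is not the quantity Proposition~\ref{prop:integrality_Klingen} controls.

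Applied to $g$ at $(\gamma_0,S_0)$, that proposition bounds $c_\nu\overline{\calC_{n+\nu,\mu}(g)}\,A_{[g]_\mu^n}(\gamma_0,S_0)$. Its complex conjugate is $c_\nu\,\calC_{n+\nu,\mu}(g)\,\overline{A_{[g]_\mu^n}(\gamma_0,S_0)}$, with $\calC$ \emph{unconjugated}. Your factor $c_\nu\overline{\calC_{n+\nu,\mu}(g)}\,\overline{A_{[g]_\mu^n}(\gamma_0,S_0)}$ differs from this by $\overline{\calC_{n+\nu,\mu}(g)}/\calC_{n+\nu,\mu}(g)$, which need not be a $\frakq$-unit. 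The paper never asserts that $\calC_{n+\nu,\mu}(g)$ is real; if you could prove that, your argument would go through.

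The same slip recurs in your third factor, where you silently drop the bars. What you actually need there is $v_\frakq\bigl(\overline{\calC_{n+\nu,\mu}(g)}\bigr)=v_{\overline{\frakq}}\bigl(\calC_{n+\nu,\mu}(g)\bigr)\le 0$. That is condition~\eqref{item:CalC_condition} at $\overline{\frakq}$, which lies over $\overline{\frakp}$, not $\frakp$. Your appeal to $\Aut(\bbC)$-stability of the basis does not supply this; it only turns the requirement into condition~\eqref{item:CalC_condition} for $g^c$. Suppose $\bbQ(f)$ is CM and $(f^{(r)}_1)^c=f^{(r)}_{d'}$ with $d'\neq 1$. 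Then for $g=f^{(r)}_{d'}$ you would need $v_\frakq\bigl(\calC_{n+r,\mu}(f^{(r)}_1)\bigr)\le 0$ for the excluded form, and this is not a hypothesis.

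The repair is to divide by $\calC_{n+\nu,\mu}(g)$ rather than by its conjugate. Since $\overline{\calC_{2n,\mu}(g)}=\frac{\bbL_F(2n,\mu)}{\bbL_F(n+\nu,\mu)}\,\overline{\calC_{n+\nu,\mu}(g)}$ with a rational ratio, write the target as
\[
  \Bigl(c_\nu\overline{\calC_{n+\nu,\mu}(g)}\,A_{[g]_\mu^n}(\gamma,S)\Bigr)\cdot
  \Bigl(c_\nu\,\calC_{n+\nu,\mu}(g)\,\overline{A_{[g]_\mu^n}(\gamma_0,S_0)}\Bigr)\cdot
  \frac{\bbL_F(2n,\mu)}{\bbL_F(n+\nu,\mu)}\cdot
  \frac{1}{c_\nu^{2}\,\calC_{n+\nu,\mu}(g)}.
\]
Each piece is now handled directly:
\begin{itemize}
\item The second factor is literally the conjugate of the proposition's output. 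It is therefore $\frakq$-integral, because the proposition gives integrality at $\overline{\frakq}$. Here you use that $\frakA(g)$, an integer multiple of the different of $\bbQ(g)$, is stable under complex conjugation, so $\overline{\frakq}\nmid\frakA(g)$.
\item The $L$-value ratio is $p$-integral, and $c_\nu$ is a $p$-unit.
\item The last factor needs only $v_\frakq(\calC_{n+\nu,\mu}(g))\le 0$, which is condition~\eqref{item:CalC_condition} verbatim at $\frakq$.
\end{itemize}
This is exactly the paper's argument. With this grouping, your ``expected obstacle'' about $\overline{\frakq}$ does not arise.
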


\begin{proof}
  For any \(\gamma\in \frakG_0\) and \(S\in \Her_n(K)_{\ge 0}\),
  assumptions~\eqref{item:frakA_condition} and~\eqref{item:prime_eta}
  together with Proposition~\ref{prop:integrality_Klingen} imply that
  \begin{align*}
     & c_\nu \overline{\calC_{2n,\mu}\bigl(f^{(\nu)}_d\bigr)}\,
    A_{[f^{(\nu)}_{d}]_{\mu}^{n}}(\gamma,S)\,
    \overline{A_{[f^{(\nu)}_{d}]_{\mu}^{n}}(\gamma_0,S_0)}      \\
     & \qquad =
    \frac{\bbL_F(2n,\mu)}{\bbL_F(n+\nu,\mu)}\cdot
    \frac{v}{c_\nu\,\calC_{n+\nu,\mu}\bigl(f^{(\nu)}_d\bigr)}   \\
     & \qquad =
    \prod_{i=n+\nu}^{2n-1}L_F(1-\mu+i,\chi_{-D_K}^i)\cdot
    \frac{v}{c_\nu\,\calC_{n+\nu,\mu}\bigl(f^{(\nu)}_d\bigr)}.
  \end{align*}
  where \(v\) is a \(\frakp\)-integral element of
  \(V_{\rho_{n}}\otimes V_{\rho'_{n}}\bigl(\bbQ(f,f^{(\nu)}_d)\bigr)\).

  By Remark~\ref{rem:Bernoulli} and assumptions~\eqref{item:prime_mu}
  and~\eqref{item:prime_DK}, each factor
  \(L_F(1-\mu+i,\chi_{-D_K}^i)\) is \(\frakp\)-integral for
  \(n+\nu \le i \le 2n-1\).
  Therefore, by assumptions~\eqref{item:CalC_condition}
  and~\eqref{item:prime_cnu}, the assertion follows.
\end{proof}

The key to the proof of the first part of the main theorem is the following lemma,
which is proved in the same way as Katsurada~\cite[Lemma~5.1]{katsurada2008congruence}
(see also \cite[Lemma~6.10]{atobe2023harder}).

\begin{lemma}\label{lem:cong}
  Let \(F_1,\ldots,F_d\) be Hecke eigenforms in
  \(\calA^\sharp_{n}(\rho_{n})\) which are linearly independent over
  \(\bbC\).
  Let \(E\) be a number field containing their Hecke fields,
  let \(\calO_E\) be its ring of integers, and let \(\frakQ\) be a prime
  ideal of \(E\).
  Let
  \(G(Z)\in(\calA^\sharp_{n}(\rho_{n})\otimes
  V_{\rho'_{n}})(\calO_{E,\frakQ})\),
  and assume that the following conditions hold:
  \begin{enumerate}
    \item The form \(G\) is expressed as
          \(G(Z)=\sum_{i=1}^d c_i F_i(Z)\)
          with \(c_i \in V_{\rho'_{n}}(E)\).
    \item There exist \(S\in \Her_n(K)_{\ge 0}\) and \(\gamma\in \frakG_0\) such that
          \(c_1 A_{F_1}(\gamma,S)\in
          (V_{\rho_{n}}\otimes V_{\rho'_{n}})(E)\)
          and \(v_\frakQ(c_1 A_{F_1}(\gamma,S))<0\).
  \end{enumerate}
  Then there exists an index \(i\neq 1\) such that
  \[
    F_i \equiv_{ev} F_1 \pmod{\frakP},
  \]
  where \(\frakP\) is a prime ideal of \(\bbQ(F_1,F_i)\) lying above \(\frakQ\).
\end{lemma}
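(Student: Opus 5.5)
The proof follows Katsurada's Lemma~5.1: a congruence of Hecke eigenvalues is forced by the integrality of the Fourier coefficients of \(G\) against a non-integral coefficient of one summand. Throughout, enlarge \(E\) to a number field containing the Fourier coefficients and Hecke eigenvalues of all \(F_i\) (rationality, Theorem~\ref{thm:integrality_automorphic}). Choose a prime \(\frakQ'\) above \(\frakQ\); working \(\frakQ'\)-adically loses nothing.

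\textbf{Step 1 (setup).} Normalize each \(F_i\) so that its coefficients are \(\frakQ'\)-integral and some coefficient is a unit. Since the \(F_i\) have distinct eigensystems or are at least linearly independent eigenforms, we group them by eigensystem modulo \(\frakQ'\). Let \(I\) be the set of indices \(i\) with \(\lambda_{F_i}(T)\equiv\lambda_{F_1}(T)\pmod{\frakQ'}\) for all \(T\in\calH_n^{\rho_n}\). We must show \(I\neq\{1\}\), so suppose for contradiction that \(I=\{1\}\).

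\textbf{Step 2 (Hecke projector).} For each \(j\notin I\) there is some \(T_j\in\calH_n^{\rho_n}\) with \(\lambda_{F_j}(T_j)\not\equiv\lambda_{F_1}(T_j)\). Since finitely many forms are involved, there is a single element
\[
  \mathcal T=\prod_{j\notin I}\bigl(T_j-\lambda_{F_j}(T_j)\bigr)\in\calH_n^{\rho_n}\otimes\calO_{E,\frakQ'}
\]
(after adjoining the eigenvalues, which are integral by the integrality lemma preceding the definition of \(\frakA(f)\)). This element kills \(F_j\) for \(j\neq1\), and
\[
  \mathcal T F_1=\beta F_1,\qquad \beta=\prod_{j\notin I}\bigl(\lambda_{F_1}(T_j)-\lambda_{F_j}(T_j)\bigr),
\]
so \(\beta\) is a \(\frakQ'\)-unit.

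\textbf{Step 3 (contradiction).} Because \(\mathcal T\) is integral with respect to \(\calA^\sharp_n(\rho_n)(\bbZ)\) (definition of \(\calH_n^{\rho_n}\)), \(\mathcal T G\) still has \(\frakQ'\)-integral coefficients in \(V_{\rho_n}\otimes V_{\rho'_n}\). But \(\mathcal T G=\beta c_1F_1\), so
\[
  A_{\mathcal TG}(\gamma,S)=\beta\, c_1A_{F_1}(\gamma,S),
\]
which has negative valuation by hypothesis (2) since \(\beta\) is a unit. This contradicts integrality, so some \(i\neq1\) lies in \(I\). The congruence holds modulo \(\frakQ'\cap\bbQ(F_1,F_i)=:\frakP\), which lies over \(\frakQ\).

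\textbf{Main obstacle.} The delicate point is Step 2: producing a single integral Hecke element separating \(F_1\) from all the \(F_j\) with \(j\notin I\). Two issues arise. First, the Hecke action must preserve the \(\calO_{E,\frakQ'}\)-integral structure after base change from \(\bbZ\) to \(\calO_E\); this follows by \(\calO_E\)-linearity, since \(\calA(\bbZ)\otimes\calO_E\) and \(\calA(\calO_E)\) agree after localization, up to the bounded-index issue handled by the rationality theorem. Second, \(\mathcal T\) must be built with coefficients in the localized ring.
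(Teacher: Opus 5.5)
Your proposal is correct and is the argument the paper relies on: the paper gives no proof of its own but cites Katsurada's Lemma~5.1. There, one assumes no $F_i$ ($i\neq1$) is congruent to $F_1$ and applies $\prod_{j\neq1}\bigl(T_j-\lambda_{F_j}(T_j)\bigr)$ to $G$; this kills every $F_j$ with $j\neq1$, multiplies $F_1$ by a $\frakQ$-unit and preserves $\frakQ$-integrality, which contradicts $v_\frakQ(c_1A_{F_1}(\gamma,S))<0$. Two minor points: the normalization of the $F_i$ and the enlargement of $E$ in Step~1 are unnecessary, and integrality of $\lambda_{F_j}(T_j)$ for possibly non-cuspidal $F_j$ should be quoted directly from Theorem~\ref{thm:integrality_automorphic} rather than from the lemma stated only for cusp forms.
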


\begin{proof}[Proof of Theorem~\ref{thm:main}]
  Let \(E\) be a finite Galois extension of \(\bbQ\), stable under complex conjugation and containing \(\bbQ(f)\), all fields \(\bbQ(f_d^{(n)})\), and all coefficient fields occurring below.
  Fix a prime ideal \(\frakQ\) of \(E\) whose contraction to \(\bbQ(f)\) is \(\frakp\),
  and write \(e_{\frakQ/\frakp}\) for its ramification index.

  By assumption~\eqref{item:prime_conditions}, we have
  \[
    \calF^{(n,n)}_\mu(g_1,\gamma;S_2)
    \in
    (\calA_{n}^{\sharp}(\rho_{n})\otimes V_{\rho'_{n}})
    (\bbZ_{(p)})
  \]
  for any \(\gamma\in \frakG_0\) and \(S_2\in \Her_n(K)_{\ge 0}\).
  Hence, by Lemma~\ref{lem:integral_Klingen_Fourier},
  \begin{equation}\label{eq:congruence_main}
    c_r\,\overline{\calC_{2n,\mu}(f)}\,
    \overline{A_{[f]_{\mu}^{n}}(\gamma_0,S_0)}\,
    [f]_{\mu}^{n}(g_1)
    +
    \sum_{d=1}^{e_n}\beta_d\, f^{(n)}_{d}(g_1)
    \in
    (\calA_n^\sharp(\rho_n)\otimes V_{\rho'_n})(\calO_{E,\frakQ}),
  \end{equation}
  where
  \[
    \beta_d
    =
    (-1)^{(n-r)\mu/2}\,
    c_n\,
    \overline{\calC_{2n,\mu}(f^{(n)}_{d})}\,
    \overline{A_{[f^{(n)}_{d}]_{\mu}^{n}}(\gamma_0,S_0)}
    \in
    V_{\rho'_{n}}\bigl(\bbQ(f^{(n)}_d)\bigr).
  \]
  As in the proof of Proposition~\ref{prop:integrality_Klingen}, we note that
  \[
    \sum_{d=1}^{e_n}\beta_d\, f^{(n)}_{d}(g_1)
    \in
    (\calA_{n}^{\sharp}(\rho_{n})\otimes V_{\rho'_{n}})(E).
  \]

  By assumptions~\eqref{item:valuation_condition}
  and~\eqref{item:prime_cnu},
  \begin{equation}\label{eq:valuation_negative}
    v_\frakQ\!\left(
    c_r\,\overline{\calC_{2n,\mu}(f)}\,
    A_{[f]_{\mu}^{n}}(\gamma_0,S_0)\,
    \overline{A_{[f]_{\mu}^{n}}(\gamma_0,S_0)}
    \right)
    =-e_{\frakQ/\frakp}\alpha<0.
  \end{equation}
  Hence Lemma~\ref{lem:cong} applies and yields the first assertion.

  We now prove the final assertion, retaining the same field \(E\) and prime
  \(\frakQ\).

  Let \(\{v_1,\ldots,v_t\}\) and \(\{v'_1,\ldots,v'_{t'}\}\) be fixed integral bases of \(V_{\rho_n}\) and \(V_{\rho'_n}\), respectively.
  Write
  \[
    A_{f_d^{(n)}}(\gamma_0,S_0) = \sum_{j=1}^t a_{dj}v_j,
  \]
  and
  \[
    \beta_d = \sum_{k=1}^{t'} b_{dk}v'_k
  \]
  with \(a_{dj},b_{dk}\in E\).

  Since each \(f_d^{(n)}\) is \(p\)-normalized,
  we have \(a_{dj}\in\calO_{E,\frakQ}\) for every \(d\) and \(j\).

  Taking the \((\gamma_0,S_0)\)-th Fourier coefficient in
  \eqref{eq:congruence_main} and using
  \eqref{eq:valuation_negative}, we obtain
  \[
    v_{\frakQ}\left(
    \sum_{d=1}^{e_n}
    A_{f_d^{(n)}}(\gamma_0,S_0)\otimes\beta_d
    \right)
    =
    -e_{\frakQ/\frakp}\alpha.
  \]
  Hence there exist indices \(j\) and \(k\) such that
  \[
    v_{\frakQ}\left(
    \sum_{d=1}^{e_n}a_{dj}b_{dk}
    \right)
    =
    -e_{\frakQ/\frakp}\alpha.
  \]
  After renumbering the two integral bases, we may assume that \(j=k=1\).
  After renumbering the forms \(f_d^{(n)}\),
  we may also assume that, for some \(e_0\ge1\),
  \begin{align*}
    a_{d1}b_{d1} & \ne0
    \quad (1\le d\le e_0), \\
    a_{d1}b_{d1} & =0
    \quad (e_0<d\le e_n).
  \end{align*}
  Thus
  \[
    v_{\frakQ}\left(
    \sum_{d=1}^{e_0}a_{d1}b_{d1}
    \right)
    =
    -e_{\frakQ/\frakp}\alpha.
  \]
  It follows that, after renumbering the first \(e_0\) forms if necessary,
  \begin{equation}\label{eq:valuation_negative_second}
    v_{\frakQ}(a_{11}b_{11})
    \le
    -e_{\frakQ/\frakp}\alpha.
  \end{equation}

  Put \(E_1=\bbQ(f_1^{(n)})\), \(\frakq=\frakQ\cap E_1\), \(\overline{\frakq} = \overline{\frakQ}\cap E_1\).
  Since \(f_1^{(n)}\) is \(p\)-normalized, it has an \(\overline{\frakq}\)-primitive Fourier coefficient.

  For any \(T\in\calH_n^{\rho_n}\),
  applying the operator \(T-\lambda_{[f]_\mu^n}(T)\) to \eqref{eq:congruence_main},
  we obtain
  \[
    H(g_1)
    :=
    \sum_{d=1}^{e_n}
    \beta_d
    \bigl(
    \lambda_{f_d^{(n)}}(T)
    -
    \lambda_{[f]_\mu^n}(T)
    \bigr)
    f_d^{(n)}(g_1)
    \in
    \bigl(
    \calS_n^\sharp(\rho_n)\otimes V_{\rho'_n}
    \bigr)(\calO_{E,\frakQ}).
  \]
  Write \(H=\sum_{k=1}^{t'}H_kv'_k\), with \(H_k\in\calS_n^\sharp(\rho_n)(E)\).

  Since \(a_{11}\in\calO_{E,\frakQ}\), the form \(a_{11}H\) is also integral at \(\frakQ\).
  Hence, by the approximation theorem, there exists a \(\frakQ\)-unit
  \(u\in E^\times\) such that
  \[
    ua_{11}H
    \in
    \bigl(
    \calS_n^\sharp(\rho_n)\otimes V_{\rho'_n}
    \bigr)(\calO_E).
  \]

  Applying Lemma~\ref{lem:integrality} at
  \(\overline{\frakq}\) and \(\overline{\frakQ}\) to
  \(f_1^{(n)}\) and \(ua_{11}H_1\), and using the orthogonality of
  \(f_1^{(n)},\ldots,f_{e_n}^{(n)}\), we obtain
  \begin{align*}
    \frac{
    \bigl(f_1^{(n)},ua_{11}H_1\bigr)
    }{
    \bigl(f_1^{(n)},f_1^{(n)}\bigr)
    }
     & =
    \overline{
    ua_{11}b_{11}
    \bigl(
    \lambda_{f_1^{(n)}}(T)
    -
    \lambda_{[f]_\mu^n}(T)
    \bigr)
    }      \\
     & \in
    \frakA(f_1^{(n)})^{-1}
    \calO_{E,\overline{\frakQ}}.
  \end{align*}

  Taking complex conjugates and using the invariance of
  \(\frakA(f_1^{(n)})\) under complex conjugation, we obtain
  \[
    ua_{11}b_{11}
    \bigl(
    \lambda_{f_1^{(n)}}(T)
    -
    \lambda_{[f]_\mu^n}(T)
    \bigr)
    \in
    \frakA(f_1^{(n)})^{-1}
    \calO_{E,\frakQ}.
  \]
  Since \(u\) is a \(\frakQ\)-unit and
  \(\frakA(f_1^{(n)})\) is prime to \(\frakQ\),
  \eqref{eq:valuation_negative_second} gives
  \[
    v_{\frakQ}\left(
    \lambda_{f_1^{(n)}}(T)
    -
    \lambda_{[f]_\mu^n}(T)
    \right)
    \ge
    e_{\frakQ/\frakp}\alpha.
  \]
  Consequently,
  \[
    v_{\frakP'}\left(
    \lambda_G(T)-\lambda_{[f]^n_\mu}(T)
    \right)
    \ge e_{\frakP'/\frakp}\alpha
    \ge\alpha
  \]
  for every \(T\in\calH_n^{\rho_n}\).
  Thus
  \(G\equiv_{ev}[f]^n_\mu\pmod{\frakP'^{\alpha}}\), which is the
  final assertion.
  This completes the proof.
\end{proof}

The same argument, without separating the cuspidal summand, gives the following
weaker conclusion under fewer assumptions.
\begin{corollary}\label{cor:main}
  Under the same notation as in Theorem~\ref{thm:main},
  assume that the conditions~\eqref{item:valuation_condition}, \eqref{item:prime_mu},
  \eqref{item:prime_DK}, \eqref{item:prime_coefficients_P},
  \eqref{item:prime_Eisenstein}, and \eqref{item:prime_cnu}
  are satisfied.
  Then there exists a Hecke eigenform
  \(F \in \calA^{\sharp}_{n}(\rho_{n})\)
  such that
  \[
    F \equiv_{ev} [f]_{\mu}^{n} \pmod{\frakP}
  \]
  for some prime ideal \(\frakP\) of \(\bbQ(f,F)\) lying above \(\frakp\).
\end{corollary}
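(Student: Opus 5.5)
We reuse the first half of the proof of Theorem~\ref{thm:main}, but without separating the Klingen--Eisenstein summands from the cuspidal ones. Fix a finite Galois extension \(E/\bbQ\), stable under complex conjugation, containing \(\bbQ(f)\) and the Hecke fields of all \(f^{(\nu)}_d\) with \(\ell(\bsk,\bsl)\le\nu\le n\). Fix a prime \(\frakQ\) of \(E\) above \(\frakp\).

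Specialise the pullback identity \eqref{eq:epsilon_mu} for \(\calF^{(n,n)}_\mu(g_1,\gamma_0;S_0)\). This gives
\[
  \calF^{(n,n)}_\mu(g_1,\gamma_0;S_0)
  =\sum_{\nu=\ell(\bsk,\bsl)}^{n}(-1)^{\nu\mu/2}c_\nu\sum_{d=1}^{e_\nu}
  \overline{\calC_{2n,\mu}(f^{(\nu)}_d)}\,
  \overline{A_{[f^{(\nu)}_d]^n_\mu}(\gamma_0,S_0)}\,[f^{(\nu)}_d]^n_\mu(g_1),
\]
with the convention \([g]^n_\mu=g\) when \(\nu=n\).

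The hypotheses \eqref{item:prime_mu}, \eqref{item:prime_DK}, \eqref{item:prime_coefficients_P} and \eqref{item:prime_Eisenstein} are exactly those of Proposition~\ref{prop:integrality_pullback} for \((n_1,n_2)=(n,n)\); note that \(\mu\ge4n=2(n+n)\). So the left-hand side lies in \((\calA^\sharp_n(\rho_n)\otimes V_{\rho'_n})(\bbZ_{(p)})\), and hence is \(\frakQ\)-integral.

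The forms \([f^{(\nu)}_d]^n_\mu\) (\(\nu<n\)) together with the \(f^{(n)}_d\) form a basis of \(\calA^\sharp_n(\rho_n)\) consisting of Hecke eigenforms; this was already used in the proof of Theorem~\ref{thm:integrality_automorphic}. They are linearly independent. We apply Lemma~\ref{lem:cong} to \(G=\calF^{(n,n)}_\mu(g_1,\gamma_0;S_0)\), with \(F_1=[f]^n_\mu\) and the other \(F_i\) running over the remaining basis elements. The coefficient of \(F_1\) is
\[
  c_1=(-1)^{r\mu/2}c_r\overline{\calC_{2n,\mu}(f)}\,\overline{A_{[f]^n_\mu}(\gamma_0,S_0)}.
\]
By \eqref{item:valuation_condition} and the \(p\)-unit property \eqref{item:prime_cnu} of \(c_r\), we get \(v_\frakQ(c_1A_{F_1}(\gamma_0,S_0))=-e_{\frakQ/\frakp}\alpha<0\). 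This is the same computation as \eqref{eq:valuation_negative}. The product here is a tensor in \(V_{\rho_n}\otimes V_{\rho'_n}\), and its valuation is taken coefficientwise with respect to the integral bases.

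Lemma~\ref{lem:cong} then produces an index \(i\ne1\) with \(F_i\equiv_{ev}[f]^n_\mu\pmod{\frakP}\), where \(\frakP\) is a prime of \(\bbQ(f,F_i)\) above \(\frakQ\cap\bbQ(f)=\frakp\). Set \(F=F_i\); it is a Hecke eigenform in \(\calA^\sharp_n(\rho_n)\). We can no longer guarantee that \(F\) is cuspidal, because we dropped hypotheses \eqref{item:CalC_condition}, \eqref{item:frakA_condition} and \eqref{item:prime_eta}. Those hypotheses were only used, through Lemma~\ref{lem:integral_Klingen_Fourier}, to discard the non-cuspidal terms with \(\nu<n\). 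Here those terms simply stay among the candidates \(F_i\).

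The main point needing care is that Lemma~\ref{lem:cong} is applicable as stated. We need that \(G\) has \(\frakQ\)-integral Fourier coefficients as an element of \((\calA^\sharp_n\otimes V_{\rho'_n})(\calO_{E,\frakQ})\). We also need the coefficients \(c_i\) to lie in \(V_{\rho'_n}(E)\), which follows from Proposition~\ref{prop:rationality}. Both are already secured above. One should also check that the choice of \(E\) and \(\frakQ\) matches the statement, i.e.\ that \(\frakP\) lies above \(\frakp\); this holds because \(\frakQ\) lies over \(\frakp\).
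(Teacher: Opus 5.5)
Your proposal is correct and is the paper's own argument: you apply Lemma~\ref{lem:cong} directly to the $\frakQ$-integral form $\calF^{(n,n)}_\mu(g_1,\gamma_0;S_0)$ given by Proposition~\ref{prop:integrality_pullback}, keeping the Klingen--Eisenstein summands with $\nu<n$ among the candidates $F_i$ instead of discarding them via Lemma~\ref{lem:integral_Klingen_Fourier}, which is exactly why \eqref{item:CalC_condition}, \eqref{item:frakA_condition} and \eqref{item:prime_eta} can be dropped. One small slip: you do not know $\mu\ge 4n$; what you use is that \eqref{item:prime_Eisenstein} is verbatim the either/or hypothesis of Proposition~\ref{prop:integrality_pullback} for $n_1+n_2=2n$, while the bound $\mu\ge 2n+2$ required by Lemma~\ref{lem:pullback} follows from $\mu>2n$ being even.
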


\section{Example}\label{sec:example}

We treat the case \(K=\bbQ(\sqrt{-3})\), whose discriminant is \(D_K=3\).
For each \(i \in \{12,22\}\), let
\(\Delta_i \in S_i(\SL_2(\bbZ)) = S_i(\Gamma_K^{(1)})\)
be the unique cusp form of weight \(i\) and level \(1\), normalized so that the Fourier coefficient of \(q\) is \(1\).

A direct calculation gives
\begin{align*}
  F_3(X; I_3)            & = 1 + 3^3 X,                        \\
  F_3(X; I_4)            & = 1 + 2\cdot 3^5 X + 3^8 X^2,       \\
  F_3(X; \diag(1,3))     & = 1 - 3^4 X^2,                      \\
  F_3(X; \diag(1,1,3))   & = 1 - 3^3(3-1) X + 3^6 X^2,         \\
  F_3(X; \diag(1,1,1,3)) & = 1 + 3^5 X + 3^9 X^2 + 3^{12} X^3.
\end{align*}

We briefly indicate the computation.
By the definition of the local Hermitian
Siegel series and its normalizing factor \cite[p.~1112]{ikeda2008lifting},
the coefficient of \(X\) is determined by the terms of degree at most one in
the defining local sum.
Since \(K_3/\bbQ_3\) is ramified, the corresponding
finite exponential sums reduce to elementary quadratic Gauss sums over
\(\calO_{K_3}/\sqrt{-3}\calO_{K_3}\simeq\mathbb F_3\).
The degrees and the remaining coefficients are then determined by the
functional equation \cite[Corollary~3.2]{ikeda2008lifting}.

\noindent
\textbf{(1)} \(\mu=8\), \(r=1\), \(n=2\), \(\bsk=(7,0)\), \(\bsl=(7,0)\).

We identify the representation space of \(\det^4\Sym^7\boxtimes\det^4\Sym^7\) with the space of homogeneous polynomials of degree \(7\) in the variables \(u_1,u_2\) and of degree \(7\) in the variables \(v_1,v_2\).
Since \(\dim \calS_{1}(\det^{11}\boxtimes\det^{11}) = 1\), we have \(\frakA(\Delta_{22}) = 1\).
By direct computation, we have
\[
  c_1 = \frac{2^{15} \cdot 3^3}{11^2\cdot13^2}.
\]
Using SageMath, we compute the constant \(\calC_{4,8}(\Delta_{22})\) as follows:
\begin{align*}
  \calC_{4,8}(\Delta_{22})
   & = \frac{\bbL_F(4, 8)}{\bbL_F(2, 8)} \bbL(7/2, \Delta_{22}) \\
   & = \zeta(1-6) L(1-5, \chi_{-3}) \Gamma_\bbC(14)^2
  \frac{L(14, \Delta_{22}) L(14, \Delta_{22} \otimes \chi_{-3})}
  {\sqrt{3} (\Delta_{22}, \Delta_{22})}                         \\
   & = -\frac{2^{24} \cdot 41}{3^{15}\cdot5 \cdot 19}.
\end{align*}

Furthermore, by \cite[Theorem~5.13]{takeda2025differential}, the differential operator \(P_{(7,0),(7,0)}(T)\) for \(T=(t_{ij})\) is explicitly given by
\[
  P_{(7,0),(7,0)}(T)
  = \frac{1}{2^{16}\cdot3^8\cdot5^3\cdot7^2\cdot11^2\cdot13^2\cdot17\cdot19}
  \sum_{j=0}^7 (-1)^j \binom{7}{j}\binom{13}{7-j}
  A^{7-j}B^{7-j}C^j,
\]
where
\begin{align*}
  A & = u_1t_{13}+u_2t_{23},     \\
  B & = v_1t_{31}+v_2t_{32},     \\
  C & = \sum_{i=1}^2\sum_{j=1}^2
  u_iv_j(t_{ij}t_{33}-t_{i3}t_{3j}).
\end{align*}
By substituting this explicit form of \(P_{(7,0),(7,0)}(T)\) into the formula in Proposition~\ref{prop:Eisen_Fourier}, we evaluate the Fourier coefficient of \([\Delta_{22}]_{8}^{2}\) at \(T_1 = \begin{pmatrix} 2 & 1 \\ 1 & 2 \end{pmatrix}\).
A direct computation then yields
\begin{align*}
      & c_1\mathcal{C}_{4,8}(\Delta_{22})
  A_{[\Delta_{22}]_{8}^{2}}(I_2;T_1)                           \\
  ={} & -L(1-5,\chi_{-3})\epsilon_{8}(I_2,I_1;T_1,1)           \\
  ={} & \frac{1}{N}\bigg[
  9644504873600 \bigl(u_1^7v_1^7+u_2^7v_2^7\bigr)              \\
      & \quad +33755767057600 \bigl(
    u_1^7v_1^6v_2+u_1^6u_2v_1^7
  +u_1u_2^6v_2^7+u_2^7v_1v_2^6\bigr)                           \\
      & \quad +49909941436320 \bigl(
    u_1^7v_1^5v_2^2+u_1^5u_2^2v_1^7
  +u_1^2u_2^5v_2^7+u_2^7v_1^2v_2^5\bigr)                       \\
      & \quad +40385435946800 \bigl(
    u_1^7v_1^4v_2^3+u_1^4u_2^3v_1^7
  +u_1^3u_2^4v_2^7+u_2^7v_1^3v_2^4\bigr)                       \\
      & \quad +18067650417880 \bigl(
    u_1^7v_1^3v_2^4+u_1^4u_2^3v_2^7
  +u_1^3u_2^4v_1^7+u_2^7v_1^4v_2^3\bigr)                       \\
      & \quad +3593923208820 \bigl(
    u_1^7v_1^2v_2^5+u_1^5u_2^2v_2^7
  +u_1^2u_2^5v_1^7+u_2^7v_1^5v_2^2\bigr)                       \\
      & \quad -4203609285722 \bigl(
    u_1^7v_1v_2^6+u_1^6u_2v_2^7
  +u_1u_2^6v_1^7+u_2^7v_1^6v_2\bigr)                           \\
      & \quad -2158319385571 \bigl(u_1^7v_2^7+u_2^7v_1^7\bigr) \\
      & \quad +119984667172160 \bigl(
  u_1^6u_2v_1^6v_2+u_1u_2^6v_1v_2^6\bigr)                      \\
      & \quad +180203242438800 \bigl(
    u_1^6u_2v_1^5v_2^2+u_1^5u_2^2v_1^6v_2
  +u_1^2u_2^5v_1v_2^6+u_1u_2^6v_1^2v_2^5\bigr)                 \\
      & \quad +137136892510240 \bigl(
    u_1^6u_2v_1^4v_2^3+u_1^4u_2^3v_1^6v_2
  +u_1^3u_2^4v_1v_2^6+u_1u_2^6v_1^3v_2^4\bigr)                 \\
      & \quad +45615097502660 \bigl(
    u_1^6u_2v_1^3v_2^4+u_1^4u_2^3v_1v_2^6
  +u_1^3u_2^4v_1^6v_2+u_1u_2^6v_1^4v_2^3\bigr)                 \\
      & \quad +25067803385772 \bigl(
    u_1^6u_2v_1^2v_2^5+u_1^5u_2^2v_1v_2^6
  +u_1^2u_2^5v_1^6v_2+u_1u_2^6v_1^5v_2^2\bigr)                 \\
      & \quad +7507020370115 \bigl(
  u_1^6u_2v_1v_2^6+u_1u_2^6v_1^6v_2\bigr)                      \\
      & \quad +316319978807280 \bigl(
  u_1^5u_2^2v_1^5v_2^2+u_1^2u_2^5v_1^2v_2^5\bigr)              \\
      & \quad +300063203952120 \bigl(
    u_1^5u_2^2v_1^4v_2^3+u_1^4u_2^3v_1^5v_2^2
  +u_1^3u_2^4v_1^2v_2^5+u_1^2u_2^5v_1^3v_2^4\bigr)             \\
      & \quad +65077833536730 \bigl(
    u_1^5u_2^2v_1^3v_2^4+u_1^4u_2^3v_1^2v_2^5
  +u_1^3u_2^4v_1^5v_2^2+u_1^2u_2^5v_1^4v_2^3\bigr)             \\
      & \quad -10975684259799 \bigl(
  u_1^5u_2^2v_1^2v_2^5+u_1^2u_2^5v_1^5v_2^2\bigr)              \\
      & \quad +520560554322760 \bigl(
  u_1^4u_2^3v_1^4v_2^3+u_1^3u_2^4v_1^3v_2^4\bigr)              \\
      & \quad +342194119803095 \bigl(
    u_1^4u_2^3v_1^3v_2^4+u_1^3u_2^4v_1^4v_2^3\bigr)
    \bigg],
\end{align*}
where \(N=2^{11}\cdot3^{12}\cdot5^3\cdot7^2\cdot11^2\cdot13^2\cdot17\cdot19\).

Consequently, for a prime \(p=41\), we have
\[
  v_{41}\!\left(
  \overline{\calC_{4,22}(\Delta_{22})}\,
  A_{[\Delta_{22}]_{8}^{2}}(I_2, T_1) \cdot
  \overline{A_{[\Delta_{22}]_{8}^{2}}(I_2, T_1)}
  \right) = -1.
\]
Since these primes satisfy the hypotheses of Theorem~\ref{thm:main}, we conclude that for each such \(p\), there exists a Hecke cusp form \(F \in \calS^{\sharp}_{2}\bigl(\det^{4}\Sym^7\boxtimes \det^{4}\Sym^7\bigr)\) such that
\[
  F \equiv [\Delta_{22}]_{8}^{2} \pmod{\frakP}
\]
for some prime ideal \(\frakP\) of the coefficient field \(\bbQ(F)\) lying above \(41\).

\noindent
\textbf{(2)} \(\mu=12\), \(r=1\), \(n=3\), \(\bsk=\bsl=0\).

This is the scalar-valued case, and the computation proceeds similarly:
\begin{align*}
  c_1 & =-2^9\cdot3^{10},                    \\
  \calC_{6,12}(\Delta_{12})
      & =\frac{2^{17}\cdot5\cdot7^2\cdot809}
  {3^{14}\cdot11\cdot691}.
\end{align*}
Again by Proposition~\ref{prop:Eisen_Fourier},
\begin{align*}
  c_1\calC_{6,12}(\Delta_{12})
  A_{[\Delta_{12}]_{12}^{3}}(I_3;T_2)
   & = \zeta(1-8)L(1-7,\chi_{-3})
  \epsilon_{12}(I_3,I_1;T_2,1)                 \\
   & = -2^2\cdot3\cdot7\cdot23\cdot149\cdot421
  \cdot42491\cdot6510393709,
\end{align*}
where \(T_2=\begin{pmatrix}2&1&0\\1&2&0\\0&0&1\end{pmatrix}\).

We then obtain
\[
  v_{809}\!\left(
  \overline{\calC_{6,12}(\Delta_{12})}\,
  A_{[\Delta_{12}]_{12}^{3}}(I_3;T_2)
  \overline{A_{[\Delta_{12}]_{12}^{3}}(I_3;T_2)}
  \right)=-1.
\]
From the above computations together with some additional straightforward verifications,
we conclude that \(809\) satisfies the assumptions of Corollary~\ref{cor:main}.
Hence there exists a Hecke eigenform
\(
F \in \calA^{\sharp}_{3}(\det^6\boxtimes\det^6)
\)
such that
\[
  F \equiv_{ev} [\Delta_{12}]_{12}^{3} \pmod{\frakP}
\]
for some prime ideal \(\frakP\) of \(\bbQ(F)\) lying above \(809\).

\bibliography{Hermitian_congruence}
\bibliographystyle{amsplain}

\end{document}